	\theoremstyle{definition}
	\newtheorem{thm}{Theorem}[section]
	\newtheorem{prop}[thm]{Proposition}
	\newtheorem{lemma}[thm]{Lemma}
	\newtheorem{defn}[thm]{Definition}
	\newtheorem{exmp}[thm]{Example}
	\newtheorem{exmps}[thm]{Examples}
	\newtheorem{const}[thm]{Construction}
	\newtheorem{rmk}[thm]{Remark}
	\newtheorem{nota}[thm]{Notation}
\newcommand{\bC}{\mathbb{C}}
\newcommand{\bN}{\mathbb{N}}
\newcommand{\bR}{\mathbb{R}}
\newcommand{\bT}{\mathbb{T}}
\newcommand{\bZ}{\mathbb{Z}}
\newcommand{\sH}{\mathcal{H}}
\newcommand{\sK}{\mathcal{K}}
\newcommand{\sP}{\mathcal{P}}
\newcommand{\sR}{\mathcal{R}}
\newcommand{\sS}{\mathcal{S}}
\newcommand{\sT}{\mathcal{T}}
\newcommand{\eps}{\varepsilon}
\newcommand{\vphi}{\varphi}
\newcommand{\spec}{\text{sp}}
\newcommand{\setdiv}{\,\big|\,}
\newcommand{\ran}{\text{ran}}
\newcommand{\id}{\text{id}}
\newcommand{\orb}{\text{orb}}
\newcommand{\dprime}{^{\prime\prime}}
\newcommand{\systemfull}{\sS=( T,( X_t )_{ t = 1 , \ldots , T }, ( K_t )_{ t = 1 , \ldots , T }, ( Y_{ t , k } )_{ t = 1 , \ldots , T ; k = 1 , \ldots , K_t } , ( J_{ t , k } )_{ t = 1 , \ldots , T ; k = 1 , \ldots , K_t } ) }
\newcommand{\systembasic}{\sS=( T, ( X_t ) , ( K_t ), ( Y_{ t , k } ) , ( J_{ t , k } ) ) }
\newcommand{\systemarg}[1]{ \sS^{ #1 } = ( T^{ #1 } , ( X_t^{ #1 } ) , ( K_t^{ #1 } ) , ( Y_{ t , k }^{ #1 } ) , ( J_{ t , k }^{ #1 } ) ) }
\newenvironment{psmallmatrix}
  {\left(\begin{smallmatrix}}
  {\end{smallmatrix}\right)}
\address{Department of Mathematics, Ben Gurion University of the Negev,
P.O.B. 653, Be'er Sheva 84105, Israel}
\email{PHerstedt@gmail.com}
\begin{document}

\title{AT-algebras from fiberwise essentially minimal zero-dimensional dynamical systems}

\author{Paul Herstedt}
\thanks{This research was supported by the Israel Science Foundation grant no.~476/16.}  

\maketitle

\begin{abstract}
We introduce a type of zero-dimensional dynamical system (a pair consisting of a totally disconnected compact metrizable space along with a homeomorphism of that space), which we call ``fiberwise essentially minimal", and we prove that the associated crossed product C*-algebra of such a system is an AT-algebra. Under the additional assumption that the system has no periodic points, we prove that the associated crossed product C*-algebra has real rank zero, which tells us that such C*-algebras are classifiable by K-theory. The associated crossed product C*-algebras to these nontrivial examples are of particular interest because they are non-simple (unlike in the minimal case).
\end{abstract}

\section*{Introduction}

The underlying goal of much research in the field of $ C^* $-algebras in the last few decades is $ K $-theoretic classification, which is the goal of showing that, for various nice classes of $ C^* $-algebras, isomorphism at the level of $ K $-theory implies isomorphism of the $ C^* $-algebras. This work was pioneered by George Elliott in the 70's (see \cite{Elliott76} for his groundbreaking work classifying the simple AF-algebras introduced by Bratteli in \cite{Bratteli72}). Classification of simple $ C^* $-algebras has been separately widely studied over the last few decades but as of recently is complete due to the work of many hands; for the most definitive results, see \cite{Phillips99} for the purely infinite case and see \cite{ElliotGongLinZiu16} for the stably finite case. There is still much to be discovered in the non-simple case. Some early results include the work of Elliott, Marius Dadarlat, and Guihua Gong in the 90's (see \cite{Elliott93}, \cite{ElliottGong96}, and \cite{DadarlatGong97}). This work classifies (not necessarily simple) real rank zero AH-algebras of slow dimension growth. In particular, this classification result includes A$\bT$-algebras of real rank zero, and examples of such $ C^* $-algebras arise from constructions introduced in this paper.

This paper is an expansion of work done in the 90's on classifying the crossed product $ C^* $-algebras associated to minimal zero-dimensional dynamical systems. In 1989, Ian Putnam (in \cite{Putnam89}) showed that these crossed products have computable ``large AF-subalgebras" whose $ K_0 $ group matches that of the crossed product. The following year, Putnam showed (in \cite{Putnam90}) that these crossed products are A$\bT$-algebras of real rank zero and are hence classified by the previously computed $ K $-theory (see \cite{Elliott93}, \cite{ElliottGong96}, and \cite{DadarlatGong97}). In 1992, Putnam, together with Richard Herman and Christian Skau, expanded this result to ``essentially minimal" zero-dimensional systems with no periodic points (see Definition \ref{defnEssentiallyMinimal} of this paper) in their paper \cite{HermanPutnamSkau92}. We further expand this result in Theorems \ref{thmMainTheorem} and \ref{thmRR0}, proving that a larger class of zero-dimensional dynamical systems (fiberwise essentially minimal with no periodic points; see Definition \ref{defnFiberwiseEssentiallyMinimal}) have associated crossed products that are A$\bT$-algebras of real rank zero. Theorem \ref{thmMainTheorem} in fact shows that if we allow periodic points, we still get that the crossed products are A$\bT$-algebras; to our knowledge, this result is not written down anywhere even in the essentially minimal case, as the primary concern of previous papers have been $ C^* $-algebras of real rank zero, as this was one of the elements of a sufficient condition for classification at the time.

\textbf{Acknowledgements:} This work was done at the University of Oregon during the author's time as a Ph.D. student and at the Ben Gurion University of the Negev, funded by the Israel Science Foundation (grant no.476/16). The author would like to thank his Ph.D. advisor Chris Phillips for proposing the idea for this project and for many helpful conversations and edits along the way, as well as Ian Putnam for pointing out a simplification to the proof of Theorem \ref{thmRR0}, and finally his current post doctoral supervisor Ilan Hirshberg for his help with editing the final pieces.

\section{Preliminaries}\label{sectionPrelim}

This section introduces the terms that will be used to prove the main results (Section \ref{sectionTheorems}) of this paper. Some basic facts will also be proved.

Let $ X $ be a totally disconnected compact metrizable space and let $ h : X \to X $ be a homeomorphism of $ X $. We call $ ( X , h ) $ a \emph{zero-dimensional system}. Let $ \alpha $ be the automorphism of $ C ( X ) $ induced by $ h $; that is, $ \alpha $ is defined by $ \alpha ( f ) ( x ) = f ( h^{ -1 } ( x ) ) $ for all $ f \in C ( X ) $ and all $ x \in X $. Then we denote the crossed product of $ C ( X ) $ by $ \alpha $ by $ C^* ( \bZ , X , h ) $ (or, less commonly, $ C^* ( \bZ , C ( X ) , \alpha ) $). We denote the ``standard unitary" of $ C^* ( \bZ , X , h ) $ by $ u $, which is a unitary element of $ C^* ( \bZ , X , h ) $ that satisfies $ u f u^* = \alpha ( f ) $ for all $ f \in C ( X ) $.

We will use the disjoint union symbol $ \bigsqcup $ to denote unions of disjoint sets. We will not always say explicitly that the sets in this union are disjoint, as this will be implied by the notation.

\begin{defn}\label{defnPartition}
Let $ X $ be a totally disconnected compact metrizable space. We define a \emph{partition} $ \sP $ of $ X $ to be a finite set of mutually disjoint compact open subsets of $ X $ whose union is $ X $. We denote by $ C ( \sP ) $ the subset of $ C ( X ) $ consisting of functions that are constant on elements of $ \sP $. 
\end{defn}

For ease of notation, we will often denote a sequence $ ( x_n )_{ n = 1 }^\infty $ just by $ ( x_n ) $. That such an object is a sequence will be clear from context. 

\begin{defn}\label{defnGeneratingSequenceOfPartitions}
We say that a sequence $ ( \sP_n ) $ of partitions of $ X $ is a \emph{generating sequence of partitions of $ X $} if $ \sP_{ n + 1 } $ is finer than $ \sP_n $ for all $ n \in \bZ_{ > 0 } $, and if for every $ x \in X $, there is a sequence $ ( V_n ) $ such that $ V_n \in \sP_n $ for all $ n \in \bZ_{ > 0 } $ and such that $ \bigcap_{ n = 1 }^\infty V_n = \{ x \} $. 
\end{defn}

The terminology ``generating sequence" comes from the fact that such a sequence of partitions generates the topology of $ X $.

\begin{nota}
Let $ x \in X $. We say that $ x $ is a \emph{periodic point} if there is a nonzero integer $n$ such that $ h^n ( x ) = x $; otherwise, we say that $ x $ is an \emph{aperiodic point}. We denote the orbit of $ x $ by $ \orb ( x ) = \{ h^n ( x ) \setdiv n \in \bZ \} $.
\end{nota}

We say that a nonempty closed subset $ Y $ of $ X $ is a \emph{minimal set} if it is $ h $-invariant and has no nonempty $ h $-invariant proper closed subsets. By Zorn's lemma, minimal sets exist for every zero-dimensional system. We use the following definition from \cite{HermanPutnamSkau92}.

\begin{defn}\label{defnEssentiallyMinimal}
We say that a dynamical system $ ( X , h ) $ is an \emph{essentially minimal system} if it has a unique minimal set. 
\end{defn}

We say that $ ( X , h ) $ is a \emph{minimal system} if the unique minimal set is $ X $ (this is a simple restatement of the classic definition). Note that essentially minimal systems have no nontrivial compact open $ h $-invariant subsets, since such a set and its complement would contain disjoint minimal sets. Also note that we do not assume that all points must be aperiodic, so $ \bZ / n\bZ $ with the shift homeomorphism is an example of a minimal zero-dimensional system.

\begin{exmp}
We give a standard example and non-example of essentially minimal zero-dimensional systems.
\begin{enumerate}[(a)]
\item Let $ X = \bZ \cup \{ \infty \} $ be the one-point compactification of the integers and let $ h $ be the shift homeomorphism, defined by $ h ( x ) = x + 1 $ for $ x \in \bZ $ and $ h ( \infty ) = \infty $. Then $ ( X , h ) $ is an essentially minimal zero-dimensional system, which can be observed by the fact that the only nonempty closed $ h $-invariant proper subset of $ X $ is $ \{ \infty \} $.
\item Let $ X = \bZ \cup \{ \infty \} \cup \{ -\infty \} $ be the two-point compactification of the integers and let $ h $ be the shift homeomorphism, defined by $ h ( x ) = x + 1 $ for $ x \in \bZ $, $ h ( \infty ) = \infty $, and $ h ( -\infty ) = -\infty $. Then $ ( X , h ) $ is not essentially minimal, as there are two minimal sets, $ \{ \infty \} $ and $ \{ -\infty \} $.
\end{enumerate}
\end{exmp}

The following definition is standard in dynamical systems.

\begin{defn}\label{defnLambdaU}
Let $ ( X , h ) $ be a zero-dimensional system and let $ U \subset X $. We define the map $ \lambda_U : U \to \bZ_{ > 0 } \cup \{ \infty \}$ by $ \lambda_U ( x ) = \inf\{ n \in \bZ_{ > 0 } \setdiv h^n ( x ) \in U \}$ if this infimum exists, and $ \lambda_U ( x ) = \infty $ otherwise. We call this map the \emph{first return time map} of $ U $.
\end{defn}

We now provide a few facts about the first return time map. The following proposition is standard and often not proved, so we provide the proof for the reader.

\begin{prop}\label{propLambdaUContinuous}
Let $ ( X , h ) $ be a zero-dimensional system, let $ U $ be a compact open subset of $ X $, and let $ \lambda_U $ be as in Definition \ref{defnLambdaU}. Then $ \lambda_U $ is continuous.
\end{prop}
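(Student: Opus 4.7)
The plan is to use the fact that in the totally disconnected Hausdorff space $X$, every compact open set is clopen, and hence so is its complement. This reduces continuity to checking that preimages of basic open sets in the one-point compactification topology on $\bZ_{>0}\cup\{\infty\}$ are open.

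First I would fix $x\in U$ and split into two cases. In the finite case $\lambda_U(x)=n\in\bZ_{>0}$, I know $h^n(x)\in U$ and $h^k(x)\notin U$ for $1\le k<n$. Since $U$ is clopen and each $h^k$ is a homeomorphism, the set
\[ V = U \cap h^{-n}(U) \cap \bigcap_{k=1}^{n-1} h^{-k}(X\setminus U) \]
is a clopen subset of $U$ containing $x$, and by construction $\lambda_U(y)=n$ for every $y\in V$. This gives an open neighborhood of $x$ on which $\lambda_U$ is constantly $n$, which is more than enough for continuity at $x$.

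In the infinite case $\lambda_U(x)=\infty$, I need to show that for every $N\in\bZ_{>0}$, there is a neighborhood of $x$ in $U$ on which $\lambda_U$ takes values in $\{N+1,N+2,\ldots\}\cup\{\infty\}$ (this is a basic open neighborhood of $\infty$ in the one-point compactification topology). Since $h^k(x)\notin U$ for $1\le k\le N$, the set
\[ W = U \cap \bigcap_{k=1}^{N} h^{-k}(X\setminus U) \]
is clopen in $U$, contains $x$, and every $y\in W$ satisfies $\lambda_U(y)>N$. This establishes continuity at $x$.

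There is no serious obstacle here; the only subtlety is that the codomain $\bZ_{>0}\cup\{\infty\}$ must be understood with its one-point compactification topology (so that $\{n\}$ is open for each finite $n$, and neighborhoods of $\infty$ are cofinite sets containing $\infty$), and that compact open sets in the Hausdorff space $X$ are automatically closed, so that complements and finite intersections of their preimages under $h^k$ remain open.
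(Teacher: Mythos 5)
Your proof is correct, and it takes a genuinely more direct route than the paper's. The paper argues continuity by splitting into upper and lower semi-continuity at finite points (upper from openness of $U$, lower from closedness of $U$ via a sequence/subsequence argument), and then handles the $\infty$ case by another sequence argument and contradiction. You instead exploit the fact that in the totally disconnected compact metrizable space $X$ the set $U$ is clopen, so $X\setminus U$ is clopen as well, and you build the neighborhoods $V$ and $W$ as finite intersections of preimages of $U$ and $X\setminus U$ under iterates of $h$. This avoids sequences entirely, gives the cleaner and strictly stronger statement that $\lambda_U$ is \emph{locally constant} on the locus where it is finite (a fact the paper does implicitly use elsewhere, e.g.\ when defining $Y_{t,k}=\lambda_{X_t}^{-1}(J_{t,k})$ and treating it as compact open), and makes the role of the one-point-compactification topology on $\bZ_{>0}\cup\{\infty\}$ explicit, which the paper leaves implicit behind the semi-continuity language. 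Both proofs are valid; yours is shorter and exposes more structure.
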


\begin{proof}
Let $ x_0 \in U $ satisfy $ \lambda_U ( x_0 ) = m $ for some $ m \in \bZ_{ > 0 } $. We show that $ \lambda_U $ is continuous at $ x_0 $. We claim that since $U$ is open, $\lambda_U$ is upper semi-continuous at $ x_0 $. Since $ h^m ( x_0 ) \in U $, we have $ x_0 \in h^{ -m } ( U ) $. Then, since $ h^{ -m } $ is a homeomorphism, $ h^{ -m } ( U ) $ is an open set in $ X $, and so is $ h^{ -m } ( U ) \cap U $. For all $ x \in h^{ -m } ( U ) \cap U $, we have $ h^m ( x ) \in U $, and so $ \lambda_U ( x ) \leq m = \lambda_U ( x_0 ) $. Thus, $ \lambda_U $ is upper semicontinuous at $ x_0 $. We claim that since $ U $ is closed, $ \lambda_U $ is lower semi-continuous at $ x_0 $. Suppose that $ \lambda_U $ is not lower semi-continuous at $ x_0 $. This means that there is a sequence $ ( x_n ) $ in $ U $ such that $ x_n \to x_0 $ and $ \lambda_U ( x_n ) < m $ for all $ n \in \bZ_{ > 0 } $. So, since $ \{ \lambda_U ( x_n ) \setdiv n \in \bZ_{ >0 } \} \subset \{ 1 , \ldots, m - 1 \} $, which is a finite set, there is a subsequence $ ( x_{ n_k } ) $ of $ ( x_n ) $ such that $( \lambda_U ( x_{ n_k } ) ) $ is a constant sequence. Say that $ \lambda_U ( x_{ n_k } ) = j $ for all $ k \in \bZ_{ > 0 } $. But then $ ( x_{ n_k } ) $ is a sequence in $ h^{ -j } ( U ) $, which is closed since $ h^{ -j } $ is continuous. Since $ x_{ n_k } \to x_0 $, we conclude that $ x_0 \in h^{ -j } ( U ) $. But then $ m \leq j $, which is a contradiction to $ \lambda_U ( x_n ) < m $ for all $ n \in \bZ_{ > 0 } $. Thus, $ \lambda_U $ is lower semi-continuous at $ x_0 $, and therefore continuous at $ x_0 $.

Now let $ x_0 \in U $ satisfy $ \lambda_U ( x_0 ) = \infty $. We now show that $ \lambda_U $ is continuous at $ x_0 $. Suppose not; that is, suppose there is a sequence $ ( x_n ) $ in $ U $ converging to $ x_0 $ such that that $ \lim_{ n \to \infty } \lambda_U ( x_n ) \neq \infty $. By passing to a subsequence, we may assume that $ ( \lambda_U ( x_n ) ) $ is bounded, and then by passing to another subsequence, we may assume that $ ( \lambda_U ( x_n ) ) $ is constant, say equal to $k$. This means that $ h^k ( x_n ) \in U $ for all $ n \in \bZ_{ > 0 } $. But then since $ h^k $ is continuous, $ \lim_{ n \to \infty } h^k ( x_n ) = h^k ( x_0 ) $. Since $ U $ is closed, $ \lim_{ n \to \infty } h^k ( x_n ) \in U $. Thus, $ h^k ( x_0 ) \in U $, a contradiction to $ \lambda_U ( x_\infty ) = \infty $. Thus, $ \lambda_U $ is continuous at $ x_0 $.

Altogether, we see that $ \lambda_U $ is continuous.
\end{proof}

The following proposition is contained in Theorem 1.1 of \cite{HermanPutnamSkau92}.

\begin{prop}\label{propEssentiallyMinimalReturnTimeMap}
Let $ ( X , h ) $ be an essentially minimal zero-dimensional system, let $ Y $ be its unique minimal set, let $ y \in Y $, let $ U $ be a compact open neighborhood of $ y $, and let $ \lambda_U $ be as in Definition \ref{defnLambdaU}. Then we have the following:
\begin{enumerate}[(a)]
\item \label{propEssentiallyMinimalReturnTimeMap(a)} $ \bigcup_{ j \in \bZ } h^j ( U ) = X $.
\item \label{propEssentiallyMinimalReturnTimeMap(b)} $ \ran( \lambda_U ) $ is a finite subset of $ \bZ_{ > 0 } $.
\item \label{propEssentiallyMinimalReturnTimeMap(c)} $ U = \{ h^{ \lambda_U ( x ) } ( x ) \setdiv x \in U \} $.
\item \label{propEssentiallyMinimalReturnTimeMap(d)} $ \bigcup_{ j \in \bZ_{ > 0 } } h^j ( U ) = \bigcup_{ j \in \bZ_{ < 0 } } h^j ( U ) = X $.
\end{enumerate}
\end{prop}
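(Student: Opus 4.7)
The unifying tool throughout will be the fact that any nonempty closed $h$-invariant subset of $X$ contains some minimal set, which by essential minimality must equal $Y$. I would prove the four parts in the given order, with each one feeding into the next.

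For part (a), the plan is to set $W = \bigcup_{j \in \bZ} h^j(U)$, which is open and manifestly $h$-invariant, and argue that its complement cannot contain $Y$ because $y \in Y \cap U \subset W$. Part (b) then follows by using compactness of $X$ to extract a finite subcover $\{h^j(U)\}_{|j| \leq N}$ of the cover supplied by (a); for any $x \in U$, the point $h^{N+1}(x)$ must lie in some $h^j(U)$ with $|j| \leq N$, so $h^{N+1-j}(x) \in U$ with $1 \leq N+1-j \leq 2N+1$, forcing $\lambda_U(x) \leq 2N+1$.

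For part (c), the inclusion $\supset$ is immediate: part (b) ensures the defining infimum is attained, so $h^{\lambda_U(x)}(x) \in U$ by definition. For the reverse inclusion I would exploit that $(X, h^{-1})$ is itself essentially minimal with the same minimal set $Y$, since $h$-invariance of a subset coincides with $h^{-1}$-invariance. Applying (a) and (b) to the reversed system yields a finite-valued backward return map $\mu_U : U \to \bZ_{>0}$ defined by $\mu_U(z) = \inf\{n > 0 : h^{-n}(z) \in U\}$. Given $z \in U$, setting $n = \mu_U(z)$ and $x = h^{-n}(z) \in U$ forces $\lambda_U(x) = n$: the inequality $\lambda_U(x) \leq n$ is clear from $h^n(x) = z \in U$, and any strictly smaller return $k$ for $x$ would give $h^{-(n-k)}(z) \in U$ with $0 < n-k < n$, contradicting minimality of $\mu_U(z)$.

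For part (d), I would mimic the argument of (a) with $W^+ = \bigcup_{j > 0} h^j(U)$ in place of $W$. This set is plainly forward-invariant under $h$, and part (c) gives $U \subset W^+$, which upgrades $W^+$ to a fully $h$-invariant open set. Its complement is then a closed $h$-invariant set missing $y$, hence empty. The equality $\bigcup_{j < 0} h^j(U) = X$ follows by the symmetric argument applied to $(X, h^{-1})$. The only step with real content is (c), and even there the essential observation is merely that essential minimality survives inversion of the homeomorphism; everything else is routine topological manipulation powered by uniqueness of the minimal set.
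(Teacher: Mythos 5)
Your proof is correct. The paper does not actually supply its own argument for this proposition; it simply cites Theorem~1.1 of Herman--Putnam--Skau (1992), so there is no internal proof to compare against. Your self-contained argument is the standard one: (a) follows because the complement of the open $h$-invariant set $\bigcup_j h^j(U)$ is closed, $h$-invariant, and misses $y\in Y$, hence must be empty by uniqueness of the minimal set; (b) by compactness; (c) by applying (a) and (b) to the reversed system $(X,h^{-1})$, noting that $h$-invariance and $h^{-1}$-invariance coincide, so essential minimality and the minimal set are preserved under inversion; and (d) by the observation that (c) forces $U\subset\bigcup_{j>0}h^j(U)$, which promotes that forward-invariant open set to a genuinely $h$-invariant one, after which the argument of (a) applies. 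All four steps are sound, and the deduction in (c) that $\lambda_U(h^{-\mu_U(z)}(z))=\mu_U(z)$ is argued carefully. The only cosmetic point is in (b): the finite subcover need not a priori have a symmetric index set $\{-N,\dots,N\}$, but one simply takes $N$ to be the maximum absolute value of the indices that appear, as you implicitly do.
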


The following definition is a key definition of this paper. It was constructed by extrapolating the properties of what a zero-dimensional dynamical system must have for the technique of the proof of Theorem 2.1 of \cite{Putnam90} to apply. This theorem proves that minimal zero-dimensional dynamical systems have A$\bT$-algebra crossed products, and is the main inspiration for this project (Theorem \ref{thmMainTheorem} of this paper is a generalization of this).

\begin{defn}\label{defnSystemOfFiniteReturnTimeMaps}
Let $ ( X , h ) $ be a zero-dimensional system and let $ \sP $ be a partition of $ X $. We define a 
\emph{system of finite first return time maps subordinate to $ \sP $} to be a tuple
\[
\systemfull
\]
such that:
\begin{enumerate}[(a)]
\item \label{defnSystemOfFiniteReturnTimeMaps(a)} We have $ T \in \bZ_{ > 0 } $. 
\item \label{defnSystemOfFiniteReturnTimeMaps(b)} For each $ t \in \{ 1 , \ldots , T \} $, $ X_t $ is a compact open subset of $ X $ contain in an element of $ \sP $.
\item \label{defnSystemOfFiniteReturnTimeMaps(c)} For each $ t \in \{ 1 , \ldots , T \} $, $ K_t \in \bZ_{ > 0 } $. 
\item \label{defnSystemOfFiniteReturnTimeMaps(d)} For each $ t \in \{ 1 , \ldots , T \} $ and each $ k \in \{ 1 , \ldots , K_t \} $, $ Y_{ t , k } $ is a compact open subset of $X_t$. Moreover, for each $ t \in \{ 1 , \ldots , T\}$, $ \{ Y_{ t , 1 } , \ldots , Y_{ t , K_t } \} $ is a partition of $X_t$; that is, 
\[
\bigsqcup_{ k = 1 }^{ K_t } Y_{ t , k } = X_t .
\]
\item \label{defnSystemOfFiniteReturnTimeMaps(e)} For each $ t \in \{ 1 , \ldots , T \} $ and each $ k \in \{ 1 , \ldots , K_t \} $, $ J_{ t , k } \in \bZ_{ > 0 } $. Using Definition \ref{defnLambdaU}, we can write $ \{ J_{ t , k } \} = \lambda_{ X_t } ( Y_{ t , k } ) $. Moreover, for each $ t \in \{ 1 , \ldots , T \} $, $ \{ h^{ J_{ t , 1 } } ( Y_{ t , 1 } ) , \ldots , h^{ J_{ t , K_t } } ( Y_{ t , K_t } ) \} $ is a partition of $ X_t $; that is, 
\[
\bigsqcup_{ k = 1 }^{ K_t } h^{ J_{ t , k } } ( Y_{ t , k } ) = X_t .
\]
\item \label{defnSystemOfFiniteReturnTimeMaps(f)} The set
\[
\sP_1 ( \sS ) = \big\{ h^j ( Y_{ t , k } ) \setdiv \mbox{$ t \in \{ 1 , \ldots , T \} $, $ k \in \{ 1 , \ldots , K_t \} $, and $ j \in \{ 0 , \ldots , J_{ t , k } - 1 \} $} \big\}
\]
is a partition of $X$. Note that this combined with condition (\ref{defnSystemOfFiniteReturnTimeMaps(e)}) also implies 
\[
\sP_2 ( \sS ) = \big\{ h^j ( Y_{ t , k } ) \setdiv \mbox{$ t \in \{ 1 , \ldots , T \} $, $ k \in \{ 1 , \ldots , K_t \} $, and $ j \in \{ 1 , \ldots , J_{ t , k } \} $} \big\}
\]
is a partition of $ X $.
\end{enumerate}
\end{defn}

We will often suppress the notation of a system of finite first return time maps subordinate to $ \sP $ by writing $ \systembasic $. When writing multiple systems in the same proof or statement (which will often be done), we will use superscripts on all elements of the system; for example, we may write $ \systemarg{ \prime } $.

\begin{rmk}\label{rmkSystemOfFiniteReutrnTimeMaps}
We make some comments about what the objects in Definition \ref{defnSystemOfFiniteReturnTimeMaps} mean and where the name of the system comes from. Adopt the notation of Definition \ref{defnSystemOfFiniteReturnTimeMaps}. Then for each $ t \in \{ 1 , \ldots , T \} $, each $ k \in \{ 1 , \ldots , K_t \} $, and each $ x \in Y_{ t , k } $, we have $ \lambda_{ X_t } ( x ) = J_{ t , k } $. So $ J_{ t , k } $ is the ``first return time" of each element of $ Y_{ t , k } $ to $ X_t $. We enumerate some more points below.
\begin{enumerate}[(a)] 
\item The number of ``bases" (see (\ref{rmkSystemOfFiniteReutrnTimeMaps(b)}) below) of $ \sS $ is $ T $. 
\item \label{rmkSystemOfFiniteReutrnTimeMaps(b)} The ``bases" of $ \sS $ is $ ( X_t )_{ t = 1 }^T $. These are the domains of the first return time maps above. That $ \sS $ is subordinate to $ \sP $ means that, for each $ t \in \{ 1 , \ldots , T \}$, $ X_t $ is contained in an element of $ \sP $.
\item For each $ t \in \{ 1 , \ldots , T \} $, $ K_t $ is the size of the partition of $ X_t $ into sets with constant first return time to $ X_t $.
\item For each $ t \in \{ 1 , \ldots , T \} $ and each $ k \in \{ 1 , \ldots , K_t \} $, $ Y_{ t , k } $ is a piece of $ X_t $ that has constant first return time to $ X_t $.
\item For each $ t \in \{ 1 , \ldots , T \} $ and each $ k \in \{ 1 , \ldots , K_t \} $, $ J_{ t , k } $ is the first return time of $ Y_{ t , k } $ to $ X_t $.
\end{enumerate}
\end{rmk}

\begin{exmps}\label{exmpsSystemsOfFiniteFirstReturnTimeMaps}
We provide some examples that illustrate Definition \ref{defnSystemOfFiniteReturnTimeMaps}.
\begin{enumerate}[(a)]
\item \label{exmpsSystemsOfFiniteFirstReturnTimeMaps(a)} If $ ( X , h ) $ is a minimal zero-dimensional system, then for any partition $ \sP $ of $ X $, $ ( X , h ) $ admits a system of finite first return time maps subordinate to $ \sP $. It is shown in the proof of Theorem 2.1 of \cite{Putnam90} that we can take $ T = 1 $, and $ X_1 $ can be any compact open subset of $ X $ that is contained in an element of $ \sP $. This can be explained by using a couple of basic properties of minimal systems. Every point in a compact open set will come back to that compact open set in finite time; this gives us $ K_1 $ and $ J_{ 1 , k } $, and we set $ Y_{ 1 , k } = \lambda_{ X_t }^{ -1 } ( J_{ 1 , k } ) $. Then, we see $ \bigsqcup_{ j = 0 }^{ J_{ t , 1 } - 1 } \bigsqcup_{ k = 1 }^{ K_t } h^j ( Y_{ 1 , k } ) $ is an invariant compact open subset of $ X $ (by Proposition \ref{propPropertiesOfX_t}(\ref{propPropertiesOfX_t(a)})), and therefore by minimality is all of $ X $.

\item \label{exmpsSystemsOfFiniteFirstReturnTimeMaps(b)} In the comments preceding Theorem 8.3 of \cite{HermanPutnamSkau92}, it is implicitly stated that if $ ( X , h ) $ is an essentially minimal zero-dimensional system with no periodic points, then for any partition $ \sP $ of $ X $, $ ( X , h ) $ admits a system of finite first return time maps subordinate to $ \sP $. This can be shown using the same technique of that of the proof of Theorem 2.1 of \cite{Putnam90} by taking $ T = 1 $ and taking $ X_1 $ to be any compact open subset of $ X $ that intersects the unique minimal set of $ ( X , h ) $ and is contained in an element of $ \sP $. Compare Proposition \ref{propPropertiesOfX_t} with Proposition \ref{propEssentiallyMinimalReturnTimeMap}(\ref{propEssentiallyMinimalReturnTimeMap(d)}).

\item \label{exmpsSystemsOfFiniteFirstReturnTimeMaps(c)} For an explicit example, let $ X = \bZ \cup \{ \infty \} $ be the one point compactification of the integers and let $ h $ be the shift homeomorphism, defined by $ h ( x ) = x + 1 $ for all $ x \in \bZ $ and $ h ( \infty ) = \infty $. Let $ \sP $ be a partition of $ \bZ $ and let $ U  \in \sP $ be the element that contains $ \infty $. By the topology of $ X $, there is a compact open set $ X_1 \subset U $ such that $ X_1 = \left( \left( ( - \infty , a ] \sqcup [ b , \infty ) \right) \cap \bZ \right) \sqcup \{ \infty \} $. Set $ T = 1 $, set $ K_1 = 2 $, set $ Y_{ 1 , 1 } = \{ \infty \} \cup \left( \left( ( - \infty , a - 1 ] \cup [ b , \infty ) \right) \cap \bZ \right ) $, set $ Y_{ 1 , 2 } = \{ a \} $, set $ J_{ 1 , 1 } = 1 $, and set $ J_{ 1 , 2 } =  b - a $. Then $ \sS $ is a system of finite first return time maps subordinate to $ \sP $. See this illustrated below, where $ X $ is represented as a subset of a circle, where the north pole is $ \infty $.
\begin{center}
\begin{tikzpicture}[scale=2.5]
\foreach \i in {1,1.5,2,2.6,3.5,5,8,13,20,30,45,70,200}
{
\fill ({cos((180/(\i))+90)},{sin((180/(\i))+90)}) circle(1pt);
}
\foreach \i in {1,1.5,2,2.6,3.5,5,8,13,20,30,45,70,200}
{
\fill ({cos((-180/(\i))+90)},{sin((-180/(\i))+90)}) circle(1pt);
}
\draw[thick, color=blue!50!black] ( { 1.2*cos ( 126 ) } , { 1.2*sin ( 126 ) } ) arc(126:0:1.2) arc(0:-180:0.2) arc (0:126:0.8) arc(306:126:0.2);
\draw[thick, color=blue!50!black] ( { 1.2*cos ( 180 ) } , { 1.2*sin ( 180 ) } ) arc(180:{180/2.6+90}:1.2) arc({180/2.6+90}:{180/2.6-90}:0.2) arc({180/2.6+90}:180:0.8) arc(0:-180:0.2);
\node[color=blue!50!black] at ( 0.6 , 1.15 ) { {\Large $ U $ } };
\draw[thick, color=green!30!black] ( { 1.1*cos ( 180/8+90 ) } , { 1.1*sin ( 180/8+90 ) } ) arc({180/8+90}:0:1.1) arc(0:-180:0.1) arc (0:{180/8+90}:0.9) arc({180/8+270}:{180/8+90}:0.1);
\draw[thick, color=purple!40!black] ( { 1.1*cos (126) } , { 1.1*sin(126) } ) arc( 126:486:0.1);
\node[color=red!50!black] at ( -0.6 , 0.95 ) { {\Large $ a $ } };
\node[color=red!50!black] at ( 1.16 , 0 ) { {\Large $ b $ } };
\node[color=green!30!black] at ( 0.5 , 0.3 ) { {\Large $ Y_{ 1 , 1 } $ } };
\node[color=purple!40!black] at ( -0.4 , 0.33 ) { {\Large $ Y_{ 1 , 2 } $ } };
\draw[->, very thick, color=green!30!black] ( 0.55 , 0.35 ) -- ( 0.65 , 0.6 );
\draw[->, very thick, color=purple!40!black] ( -0.45 , 0.45 ) -- ( -0.55 , 0.7 );
\end{tikzpicture}
\end{center}
The partition $ \sP_1 ( \sS ) $ is shown below. Note that in this specific picture, $ J_{ t , 2 } = b - a = 7 $.
\begin{center}
\begin{tikzpicture}[scale=2.5]
\foreach \i in {1,1.5,2,2.6,3.5,5,8,13,20,30,45,70,200}
{
\fill ({cos((180/(\i))+90)},{sin((180/(\i))+90)}) circle(1pt);
}
\foreach \i in {1,1.5,2,2.6,3.5,5,8,13,20,30,45,70,200}
{
\fill ({cos((-180/(\i))+90)},{sin((-180/(\i))+90)}) circle(1pt);
}
\foreach \i in {-1.5 , 1, 1.5, 2, 2.6 , 3.5 , 5 }
{
\draw[thick, color=purple!40!black] ( { 1.1*cos (180/\i+90) } , { 1.1*sin(180/\i+90) } ) arc( {180/\i+90}:{180/\i+450}:0.1);
}
\draw[thick, color=green!30!black] ( { 1.1*cos ( 180/8+90 ) } , { 1.1*sin ( 180/8+90 ) } ) arc({180/8+90}:0:1.1) arc(0:-180:0.1) arc (0:{180/8+90}:0.9) arc({180/8+270}:{180/8+90}:0.1);
\node[color=purple!40!black] at ( { 1.25*cos( 180 / 5 + 90 ) }, { 1.25*sin( 180 / 5 + 90 ) } ) {\Large $ Y_{ 1 , 2 } $};
\node[color=purple!40!black] at ( { 1.3*cos( 180 / 3.5 + 90 ) }, { 1.3*sin( 180 / 3.5 + 90 ) } ) {\Large $ h ( Y_{ 1 , 2 } ) $};
\node[color=purple!40!black] at ( { 1.35*cos( 180 / 2.6 + 90 ) }, { 1.35*sin( 180 / 2.6 + 90 ) } ) {\Large $ h^2 ( Y_{ 1 , 2 } ) $};
\node[color=purple!40!black] at ( { 1.4*cos( 180 / 2 + 90 ) }, { 1.4*sin( 180 / 2 + 90 ) } ) {\Large $ h^3 ( Y_{ 1 , 2 } ) $};
\node[color=purple!40!black] at ( { 1.35*cos( 180 / 1.5 + 90 ) }, { 1.35*sin( 180 / 1.5 + 90 ) } ) {\Large $ h^4 ( Y_{ 1 , 2 } ) $};
\node[color=purple!40!black] at ( { 1.25*cos( 180 / 1 + 90 ) }, { 1.25*sin( 180 / 1 + 90 ) } ) {\Large $ h^5 ( Y_{ 1 , 2 } ) $};
\node[color=purple!40!black] at ( { 1.35*cos( 180 / -1.5 + 90 ) }, { 1.35*sin( 180 / -1.5 + 90 ) } ) {\Large $ h^6 ( Y_{ 1 , 2 } ) $};
\node[color=green!30!black] at ( { 1.25*cos( 180 /- 8 + 90 ) }, { 1.25*sin( 180 / -8 + 90 ) } ) {\Large $ Y_{ 1 , 1 } $};
\end{tikzpicture}
\end{center}
\end{enumerate}

Example \ref{exmpsSystemsOfFiniteFirstReturnTimeMaps}(\ref{exmpsSystemsOfFiniteFirstReturnTimeMaps(c)}) is that of an essentially minimal zero-dimensional system (with a periodic point). In fact, using Proposition \ref{propEssentiallyMinimalReturnTimeMap}, it is not hard to show that, given an essentially minimal zero-dimensional system $ ( X , h ) $ and a partition $ \sP $ of $ X $, there is a system of finite first return time maps subordinate to $ \sP $. It may be difficult to think of less trivial examples of zero-dimensional systems $ ( X , h ) $ and partitions $ \sP $ of $ X $ such that $ ( X , h ) $ admits a system of finite first return time maps subordinate to $ \sP $; the definition is a bit technical and doesn't lend itself to example generation. It turns out that there are other examples of zero-dimensional systems that admit systems of finite first return time maps; see Definition \ref{defnFiberwiseEssentiallyMinimal}, Examples \ref{exmpsFiberwiseEssentiallyMinimal}, and Theorem \ref{thmFiberwiseIffAdmitsPartitions}.
\end{exmps}

\begin{prop}\label{propPropertiesOfX_t}
Let $(X,h)$ be a zero-dimensional system, let $\sP$ be a partition of $X$, and let $\systembasic$ be a system of finite first return time maps subordinate to $\sP$. Then:
\begin{enumerate}[(a)]
\item \label{propPropertiesOfX_t(a)} For every $ t \in \{ 1 , \ldots , T \} $, $ \bigcup_{ j \in \bZ } h^j ( X_t ) $ is a compact open subset of $ X $.
\item \label{propPropertiesOfX_t(b)} We have $\bigsqcup_{ t = 1 }^T \bigcup_{ j \in \bZ } h^j ( X_t ) = X $.
\end{enumerate}
\end{prop}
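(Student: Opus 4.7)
My plan is to identify, for each $t$, an explicit compact open set that equals $\bigcup_{j\in\bZ}h^j(X_t)$, namely the finite union of ``columns'' associated to $X_t$ in the partition $\sP_1(\sS)$. Specifically, define
\[
V_t := \bigsqcup_{k=1}^{K_t}\bigsqcup_{j=0}^{J_{t,k}-1} h^j(Y_{t,k}).
\]
This is a finite union of compact open sets, so it is compact open. Part (a) will follow once I prove $V_t = \bigcup_{j\in\bZ}h^j(X_t)$, and part (b) will then follow by observing that $\bigsqcup_t V_t = \sP_1(\sS) = X$, with the disjointness built into condition (\ref{defnSystemOfFiniteReturnTimeMaps(f)}) of Definition~\ref{defnSystemOfFiniteReturnTimeMaps}.

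For the nontrivial containment $\bigcup_{j\in\bZ}h^j(X_t)\subseteq V_t$, I would show that $V_t$ contains $X_t$ (take $j=0$ and use condition (\ref{defnSystemOfFiniteReturnTimeMaps(d)})) and that $V_t$ is $h$-invariant, i.e.\ $h(V_t)=V_t$. To check $h(V_t)\subseteq V_t$, I examine a block $h^j(Y_{t,k})$ with $0\le j\le J_{t,k}-1$: if $j+1\le J_{t,k}-1$, then $h^{j+1}(Y_{t,k})$ is itself a block of $V_t$; and if $j+1=J_{t,k}$, then condition (\ref{defnSystemOfFiniteReturnTimeMaps(e)}) gives $h^{J_{t,k}}(Y_{t,k})\subseteq X_t=\bigsqcup_{k'}Y_{t,k'}$, which consists of the $j'=0$ blocks of $V_t$. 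For the reverse $V_t\subseteq h(V_t)$: a block $h^j(Y_{t,k})$ with $j\ge 1$ equals $h$ applied to $h^{j-1}(Y_{t,k})\subseteq V_t$; and for a block $Y_{t,k}=h^0(Y_{t,k})$, I use condition (\ref{defnSystemOfFiniteReturnTimeMaps(e)}) in the form $X_t=\bigsqcup_{k'}h^{J_{t,k'}}(Y_{t,k'})=h\bigl(\bigsqcup_{k'}h^{J_{t,k'}-1}(Y_{t,k'})\bigr)$ to express $Y_{t,k}\subseteq X_t$ as $h$ of a subset of $V_t$. The reverse containment $V_t\subseteq\bigcup_{j\in\bZ}h^j(X_t)$ is immediate since each $h^j(Y_{t,k})\subseteq h^j(X_t)$.

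For (b), once (a) identifies $\bigcup_{j\in\bZ}h^j(X_t)$ with $V_t$, the equality $\bigsqcup_{t=1}^T V_t = X$ is exactly the statement that $\sP_1(\sS)$ is a partition of $X$ (condition (\ref{defnSystemOfFiniteReturnTimeMaps(f)})), and the disjointness across different values of $t$ is also encoded there, justifying the disjoint-union symbol in the statement.

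I do not expect serious obstacles; the main care needed is in the $h$-invariance argument, where one must handle the ``wrap-around'' case $j+1=J_{t,k}$ correctly using condition (\ref{defnSystemOfFiniteReturnTimeMaps(e)}) --- that is, using the fact that the sets $h^{J_{t,k}}(Y_{t,k})$ exactly re-partition $X_t$ rather than merely landing inside it. This is the only place where both halves of condition (\ref{defnSystemOfFiniteReturnTimeMaps(e)}) (the first-return-time identification and the partition property) are genuinely used.
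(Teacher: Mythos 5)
Your proof is correct and follows the same strategy as the paper's: identify $\bigcup_{j\in\bZ}h^j(X_t)$ with the explicit compact open set $V_t=\bigsqcup_{k}\bigsqcup_{j=0}^{J_{t,k}-1}h^j(Y_{t,k})$, using $h$-invariance (via condition (\ref{defnSystemOfFiniteReturnTimeMaps(e)})) for one containment and the trivial inclusion $h^j(Y_{t,k})\subset h^j(X_t)$ for the other, then deduce (b) from condition (\ref{defnSystemOfFiniteReturnTimeMaps(f)}). Your write-up simply spells out the $h$-invariance verification in more detail than the paper does.
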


\begin{proof}
We claim that 
\[
\bigsqcup_{ k = 1 }^{ K_t } \bigsqcup_{ j = 0 }^{ J_{ t , k } - 1 } h^j ( Y_{ t , k } ) = \bigcup_{ j \in \bZ } h^j ( X_t ) .
\]
Conclusion (\ref{propPropertiesOfX_t(a)}) follows from this claim since the left-hand side of the above equation is clearly compact and open, as it is the disjoint union of finitely many compact open sets. Conclusion (\ref{propPropertiesOfX_t(b)}) follows from this claim since $ \bigsqcup_{ t = 1 }^T  \bigsqcup_{ k = 1 }^{ K_t } \bigsqcup_{ j = 0 }^{ J_{ t , k } - 1 } h^j ( Y_{ t , k } ) = X $. To prove the claim, fix $ t \in \{ 1 , \ldots , T \} $ and set 
\[
W_t = \bigsqcup_{ k = 1 }^{ K_t } \bigsqcup_{ j = 0 }^{ J_{ t , k } - 1 } h^j ( Y_{ t , k } ).
\]
Note that condition (\ref{defnSystemOfFiniteReturnTimeMaps(e)}) of Definition \ref{defnSystemOfFiniteReturnTimeMaps} tells us that $ \bigsqcup_{ k = 1 }^{ K_t } h^{ J_{ t , k } } ( Y_{ t , k } ) = X_t $. Thus, $ W_t $ is an $ h $-invariant set that contains $ X_t $, meaning that it must contain $ \bigcup_{ j \in \bZ } h^j ( X_t ) $. To see that $ W_t $ contains nothing more, note that $ W_t \subset \bigcup_{ j \in \bZ } \bigsqcup_{ k = 1 }^{ K_t } h^j ( Y_{ t , k } ) = \bigcup_{ j \in \bZ } h^j ( X_t ) $. This proves the claim, and by the argument above, proves the proposition.
\end{proof}

The following proposition shows that we can ``refine" a system $ \systembasic $ of finite first return time maps without changing the bases, so that the associated partitions (Definition \ref{defnSystemOfFiniteReturnTimeMaps}(\ref{defnSystemOfFiniteReturnTimeMaps(f)})) are as fine as we want. We essentially do this by taking $ t \in \{ 1 , \ldots , T \} $ and cutting up the sets $ Y_{ t , k } $ small enough so that their iterates are always contained in elements of a given partition. This increases the value of $ K_t $ and makes more of $ J_{ t , k } $ the same number, but this is no problem, as there is nothing in Definition \ref{defnSystemOfFiniteReturnTimeMaps} that says $ J_{ t , k } $ must be distinct for each $ k \in \{ 1 , \ldots , K_t \} $ (even though a standard way of constructing systems of finite first return time maps by defining $ Y_{ t , k } = \lambda_{ X_t }^{ -1 } ( j ) $ for $ j \in \ran ( \lambda_{ X_t } ) $ would lead to one where $ J_{ t , k } $ is distinct for each $ k  \in \{ 1 , \ldots , K_t \} $). This proposition is used a number of times in the lemmas that lead to Theorem \ref{thmMainTheorem}.

\begin{prop}\label{propReturnTimeMapsGiveFinerPartitions}
Let $ ( X , h ) $ be a zero-dimensional system, let $\sP$ and $\sP'$ be partitions of $X$, and let $ \systembasic $ be a system of finite first return time maps subordinate to $\sP$. Then there is a system of finite first return time maps subordinate to $\sP$, denoted by $ \systemarg{\prime} $, such that
\begin{enumerate}[(a)]
\item \label{propReturnTimeMapsGiveFinerPartitions(a)} $ T' = T $ and, for all $ t \in \{ 1 , \ldots , T \} $, $ X_t = X_t' $.
\item \label{propReturnTimeMapsGiveFinerPartitions(b)} Using the notation of Definition \ref{defnSystemOfFiniteReturnTimeMaps}, $ \sP_1 ( \sS' ) $ and $ \sP_2 ( \sS' ) $ are finer than $ \sP' $.
\end{enumerate}
\end{prop}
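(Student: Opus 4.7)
\bigskip
\noindent\textbf{Proof proposal.} The plan is to keep the bases $X_t$ (and hence also $T$) unchanged and to refine each $Y_{t,k}$ into finitely many clopen pieces that are small enough that every relevant iterate lands in a single element of $\sP'$. Since we are not altering the bases, the first return time to $X_t$ is unchanged on each subpiece, so conditions (a)--(e) of Definition \ref{defnSystemOfFiniteReturnTimeMaps} will carry over almost automatically.

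More precisely, fix $t \in \{1,\ldots,T\}$ and $k \in \{1,\ldots,K_t\}$. For each $j \in \{0,1,\ldots,J_{t,k}\}$ and each $x \in Y_{t,k}$, let $P_j(x) \in \sP'$ be the unique element of $\sP'$ containing $h^j(x)$. Because $\sP'$ consists of clopen sets and $h^j$ is a homeomorphism, the map $x \mapsto P_j(x)$ is locally constant on $Y_{t,k}$. Thus the map
\[
\varphi_{t,k} \colon Y_{t,k} \to (\sP')^{J_{t,k}+1}, \qquad \varphi_{t,k}(x) = (P_0(x), P_1(x), \ldots, P_{J_{t,k}}(x)),
\]
is continuous into a finite discrete set, so its nonempty fibers form a finite partition of $Y_{t,k}$ into clopen subsets. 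Label these fibers $Y_{t,k,1}', \ldots, Y_{t,k,m_{t,k}}'$; by construction, for each such fiber $Y_{t,k,\ell}'$ and each $j \in \{0,1,\ldots,J_{t,k}\}$, the set $h^j(Y_{t,k,\ell}')$ is contained in a single element of $\sP'$. Now reindex the doubly indexed family $(Y_{t,k,\ell}')_{k,\ell}$ as $(Y_{t,k'}')_{k'=1}^{K_t'}$ with $K_t' = \sum_k m_{t,k}$, and define $J_{t,k'}'$ to be the value $J_{t,k}$ corresponding to the $k$ from which $Y_{t,k'}'$ was cut.

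It remains to verify that $\sS' = (T, (X_t), (K_t'), (Y_{t,k'}'), (J_{t,k'}'))$ satisfies Definition \ref{defnSystemOfFiniteReturnTimeMaps} and conclusions (a), (b). Conditions (a)--(d) are immediate from the construction. For (e), since $\lambda_{X_t}$ is constant with value $J_{t,k}$ on each $Y_{t,k}$, it is constant with value $J_{t,k'}'$ on each subset $Y_{t,k'}'$; and the partition $\bigsqcup_{k} h^{J_{t,k}}(Y_{t,k}) = X_t$ refines to $\bigsqcup_{k'} h^{J_{t,k'}'}(Y_{t,k'}') = X_t$. Condition (f) follows analogously because each $h^j(Y_{t,k})$ in $\sP_1(\sS)$ splits into the disjoint clopen pieces $h^j(Y_{t,k'}')$. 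Finally, $\sP_1(\sS')$ is finer than $\sP'$ by the defining property of the fibers of $\varphi_{t,k}$ applied with $j \in \{0,\ldots,J_{t,k}-1\}$, and $\sP_2(\sS')$ is finer than $\sP'$ by the same property applied with $j \in \{1,\ldots,J_{t,k}\}$ (which is exactly why we included $j = J_{t,k}$ in the codomain of $\varphi_{t,k}$).

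There is no genuine obstacle here; the only subtlety worth flagging is the need to include \emph{both} $j=0$ and $j=J_{t,k}$ in the domain of $\varphi_{t,k}$ so that both $\sP_1(\sS')$ and $\sP_2(\sS')$ get refined simultaneously. Everything else is bookkeeping.
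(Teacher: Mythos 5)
Your proposal is correct and follows essentially the same strategy as the paper: keep the bases $X_t$ fixed, cut each $Y_{t,k}$ into clopen pieces small enough that all iterates $h^j$ for $j \in \{0,\ldots,J_{t,k}\}$ land inside a single element of $\sP'$, inherit $J_{t,k'}' = J_{t,k}$, and reindex. Your fiber-of-$\varphi_{t,k}$ construction produces precisely the kind of partition $\sP_{t,k}$ of $Y_{t,k}$ that the paper introduces via the auxiliary family $A_{t,k} = \{Y_{t,k}\cap h^{-j}(U_r)\}$, so the two arguments are the same up to how that partition is exhibited.
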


\begin{proof}
Write $ \sP' = \{ U_1 , \ldots , U_R \} $. Let $ t \in \{ 1 , \ldots , T \} $ and let $ k \in \{ 1 , \ldots , K_t \} $. Define the following set: 
\[
A_{ t , k } = \big\{ Y_{ t , k }  \cap h^{ -j } ( U_r ) \setdiv \mbox{$r \in \{ 1 , \ldots , R \}$, $j \in \{ 0 , \ldots , J_{ t , k } \}$, and $Y_{ t , k } \cap h^{ -j } ( U_r ) \neq \varnothing$} \big\} . 
\]
First, notice that every element of $ A_{ t , k } $ is a subset of $ Y_{ t , k } $. Next, notice that since $ \sP' $ is a partition of $ X $, $\bigsqcup_{ r = 1 }^R ( Y_{ t , k } \cap U_r ) = Y_{ t , k }$. Thus, there is a partition $ \sP_{ t , k } $ of $ Y_{ t , k } $ such that for every $ U \in \sP_{ t , k } $ and every $ V \in A_{ t , k } $, we have $ U \subset V $ or $ U \cap V = \varnothing $. Write $ \sP_{ t , k } = \{ Y_{ t , k } ( 1 ) , \ldots , Y_{ t , k } ( M_{ t , k } ) \} $.

Set $ T' = T $. For each $ t \in \{ 1 , \ldots , T' \} $, define $ X_t' = X_t $ and $ K_t' = \sum_{ k = 1 }^{ K_t } M_{ t , k } $. For each $ t \in \{ 1 , \ldots , T' \} $ and each $ k' \in \{ 1 , \ldots , K_t' \} $, let $ k \in \{ 1 , \ldots , K_t \} $ and $ m \in \{ 1 , \ldots , M_{ t , k } \} $ satisfy $ k' = \sum_{ l = 1 }^{ k - 1 } M_{ t , l } + m $, and define a compact open set $ Y_{ t , k' }' \subset X_t $ by $ Y_{ t , k' }' = Y_{ t , k } ( m ) $ and define $ J_{ t , k' } = J_{ t , k } $.

We now show that $\systemarg{\prime}$ is a system of finite first return time maps subordinate to $\sP$ by checking the conditions of Definition \ref{defnSystemOfFiniteReturnTimeMaps}. It is clear that (\ref{defnSystemOfFiniteReturnTimeMaps(a)}), (\ref{defnSystemOfFiniteReturnTimeMaps(b)}), and (\ref{defnSystemOfFiniteReturnTimeMaps(c)}) are satisfied. Observe that for each $ t \in \{ 1 , \ldots , T' \} $, we have
\begin{align*}
\bigsqcup_{ k = 1 }^{ K_t' } Y_{ t , k }' &= \bigsqcup_{ k = 1 }^{ K_t } \bigsqcup_{ m = 1 }^{ M_{ t , k } } Y_{ t , k } ( m ) \\
&= \bigsqcup_{ k = 1 }^{ K_t } Y_{ t , k } \\
&= X_t \\
&= X_t' .
\end{align*}
Thus, condition (\ref{defnSystemOfFiniteReturnTimeMaps(d)}) is satisfied. Similarly, for each $ t \in \{ 1 , \ldots , T' \} $, we have 
\begin{align*}
\bigsqcup_{ k = 1 }^{ K_t' } h^{ J_{ t , k }' } ( Y_{ t , k }' ) &= X_t' .
\end{align*}
Thus, condition (\ref{defnSystemOfFiniteReturnTimeMaps(e)}) is satisfied. Let $ x \in X $. There are precisely one $ t \in \{ 1 , \ldots , T \} $, one $ k \in \{ 1 , \ldots , K_t \} $, and one $ j \in \{ 0 , \ldots , J_{ t , k } - 1 \} $ such that $ x \in h^j ( Y_{ t , k } ) $. Since $ \sP_{ t , k } $ is a partition of $ Y_{ t , k } $, there is precisely one $ m \in \{ 1 , \ldots , M_{ t , k } \} $ such that $ h^{ -j } ( x ) \in Y_{ t , k } ( m ) $. Set $ k' = \sum_{ l = 1 }^{ k - 1 } M_{ l , k } + m $, so that $ Y_{ t , k } ( m ) = Y_{ t , k }' $. Then since $ J_{ t , k' }' = J_{ t , k } $, we have precisely one $ t \in \{ 1 , \ldots , T' \}$, one $k' \in \{ 1 , \ldots , K_t' \}$, and one $ j \in \{ 0 , \ldots , J_{ t , k' }' - 1 \} $ such that $ x \in h^j ( Y_{ t , k' }' ) $. Thus, condition (\ref{defnSystemOfFiniteReturnTimeMaps(f)}) is satisfied. 

We now verify the conclusions of the proposition. Clearly (\ref{propReturnTimeMapsGiveFinerPartitions(a)}) is satisfied. For (\ref{propReturnTimeMapsGiveFinerPartitions(b)}), let $ t \in \{ 1 , \ldots T' \} $, let $ k' \in \{ 1 , \ldots , K_t' \} $, and let $ j \in \{ 0 , \ldots , J_{ t , k }' - 1 \} $. By definition, there is some $ k \in \{ 1 , \ldots , K_t \} $ and some $ m \in \{ 1 , \ldots , M_{ t , k } \} $ such that $ Y_{ t , k' }' = Y_{ t , k } ( m ) $. Since $ \sP' $ is a partition of $X$, it is also clear that 
\[
\sP_j' = \big\{ h^{ -j } ( U_r ) \setdiv \mbox{$r \in \{ 1 , \ldots , R \}$} \big\} 
\]
is a partition of $X$. Thus, there is some $ r \in \{ 1 , \ldots , R \}$ such that $ h^{ -j } ( U_r ) \cap Y_{ t , k } $ intersects $ Y_{ t , k } ( m ) $. By the definition of $ \sP_{ t , k } $, this means that $ Y_{ t , k } ( m ) \subset h^{ -j } ( U_r ) \cap Y_{ t , k } $. But then $ h^j ( Y_{ t , k' }' ) = h^j ( Y_{ t , k } ( m ) ) \subset U_r $. This proves that $ \sP_1 ( \sS' ) $ and $ \sP_2 ( \sS' ) $ are finer than $ \sP' $. This proves the proposition.
\end{proof}

The next two lemmas give us conditions for when a system $ \sS' $ has associated partitions (Definition \ref{defnSystemOfFiniteReturnTimeMaps}(\ref{defnSystemOfFiniteReturnTimeMaps(f)})) that are finer than those of another system $ \sS $. These are useful in proving other lemmas in Sections \ref{sectionProof1} and \ref{sectionProof2}.

\begin{lemma}\label{lemmaConditionForFinerPartitions}
Let $ ( X , h ) $ be a zero-dimensional system, let $ \sP $ be a partition of $ X $, and let $ \systembasic $ and $ \systemarg{ \prime } $ be systems of finite first return time maps subordinate to $ \sP $ such that $ \bigsqcup_{ t = 1 }^T X_t = \bigsqcup_{ t = 1 }^{ T' } X_t' $. Then $ \sP_1 ( \sS' ) $ is finer than $ \sP_1 ( \sS ) $ if and only if for each $ s \in \{ 1 , \ldots , T' \} $ and each $ l \in \{ 1 , \ldots , K_s' \} $, there is a $ t \in \{ 1 , \ldots , T \} $ and a $ k \in \{ 1 , \ldots , K_t \} $ such that $ Y_{ s , l }' \subset Y_{ t , k } $.
\end{lemma}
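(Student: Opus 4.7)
The plan is to prove both implications by exploiting the ``height zero'' structure of the two partitions: in $\sP_1(\sS)$, the elements of the form $h^0(Y_{t,k})=Y_{t,k}$ together cover $\bigsqcup_{t}X_t$, while any element $h^j(Y_{t,k})$ with $j\geq 1$ must be disjoint from every $Y_{t',k'}$ (since two distinct members of the partition $\sP_1(\sS)$ are disjoint), and hence disjoint from $\bigsqcup_{t'}X_{t'}$. The analogous statement holds for $\sP_1(\sS')$. Combined with the hypothesis $\bigsqcup_{t}X_t=\bigsqcup_{s'}X_{s'}'$, this dichotomy forces any containment of a height-zero set inside an element of the other partition to itself be a height-zero containment.

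For the forward direction, I would fix $s\in\{1,\ldots,T'\}$ and $l\in\{1,\ldots,K_s'\}$ and, using the assumption that $\sP_1(\sS')$ refines $\sP_1(\sS)$, pick an element $V=h^j(Y_{t,k})\in\sP_1(\sS)$ with $Y_{s,l}'\subset V$. Because $Y_{s,l}'\subset X_s'\subset\bigsqcup_{s'}X_{s'}'=\bigsqcup_{t'}X_{t'}$ and all elements of $\sP_1(\sS)$ with $j\geq 1$ are disjoint from $\bigsqcup_{t'}X_{t'}$ by the observation above, the only possibility is $j=0$, yielding $Y_{s,l}'\subset Y_{t,k}$.

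For the reverse direction, the central step is to upgrade the hypothesis $Y_{s,l}'\subset Y_{t,k}$ to the equality $J_{s,l}'=J_{t,k}$. Fix $x\in Y_{s,l}'\subset Y_{t,k}$. By Proposition~\ref{propPropertiesOfX_t}(\ref{propPropertiesOfX_t(b)}), the invariant clopen sets $\bigcup_{j\in\bZ}h^j(X_{t'})$ for $t'\in\{1,\ldots,T\}$ partition $X$, so the orbit of $x$ meets $\bigsqcup_{t'}X_{t'}$ only inside $X_t$; hence the least $n\in\bZ_{>0}$ with $h^n(x)\in\bigsqcup_{t'}X_{t'}$ equals the first return time of $x$ to $X_t$, which is $J_{t,k}$. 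Applying the same argument to $\sS'$ shows that the least such $n$ also equals $J_{s,l}'$, and since $\bigsqcup_{t'}X_{t'}=\bigsqcup_{s'}X_{s'}'$ the two first-visit times to this common set coincide, giving $J_{s,l}'=J_{t,k}$. Once this is known, any $h^j(Y_{s,l}')\in\sP_1(\sS')$ with $j\in\{0,\ldots,J_{s,l}'-1\}=\{0,\ldots,J_{t,k}-1\}$ satisfies $h^j(Y_{s,l}')\subset h^j(Y_{t,k})\in\sP_1(\sS)$, establishing the refinement.

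The main obstacle is the equality $J_{s,l}'=J_{t,k}$ in the reverse direction; the hypothesis only supplies $Y_{s,l}'\subset Y_{t,k}$, and a priori the first return time to the smaller base $X_s'$ could differ from that to $X_t$. The essential ingredient that makes it work is the disjointness of orbits of distinct bases from Proposition~\ref{propPropertiesOfX_t}, which forces the orbit of $x$ to interact with the family $(X_{t'})$ only through $X_t$ itself, and similarly on the $\sS'$ side, so that the global equality $\bigsqcup_{t}X_t=\bigsqcup_{s'}X_{s'}'$ can be localised to a statement about first return times at the individual point $x$.
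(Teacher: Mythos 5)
Your proof is correct and uses essentially the same mechanism as the paper's: both directions hinge on the observation that, since the orbits of distinct bases partition $X$ and $\bigsqcup_t X_t = \bigsqcup_{t'} X_{t'}'$, the first hitting time of this common set from any point is simultaneously the hitting time of the relevant $X_t$ and the relevant $X_s'$. The only cosmetic difference is that for $(\Leftarrow)$ you establish $J_{s,l}' = J_{t,k}$ explicitly via forward return times from a point of $Y_{s,l}'$, whereas the paper reaches the same conclusion by matching the backward first-hit indices $i$ and $j$ of a point in an arbitrary intersection $h^i(Y_{s,l}') \cap h^j(Y_{t,k})$.
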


\begin{proof}
($ \Rightarrow $). Let $ s \in \{ 1 , \ldots , T' \} $ and let $ l \in \{ 1 , \ldots , K_s' \} $. Since $ \sP_1 ( \sS' ) $ is finer than $ \sP_1 ( \sS ) $, there is a $ t \in \{ 1 , \ldots , T \} $, a $ k \in \{ 1 , \ldots , K_t \} $, and a $ j \in \{ 0 , \ldots , J_{ t , k } - 1 \} $ such that $ Y_{ s , l }' \subset h^j ( Y_{ t , k } ) $. But by assumption, there is $ t' \in \{ 1 , \ldots , T \} $ such that $ Y_{ s , l }' \subset X_{ t' } $. Then since $ \sP_1 ( \sS )$ is a partition of $ X $, we must have $ t' = t $, and hence we must have $ j = 0 $.

($ \Leftarrow $). Let $ t \in \{ 1 , \ldots , T \} $, let $ k \in \{ 1 , \ldots , K_t \} $, and let $ j \in \{ 0 , \ldots , J_{ t , k } - 1 \} $. Let $ s \in \{ 1 , \ldots , T' \} $, $ l  \in \{ 1 , \ldots , K_{ s }' \} $, and $ i \in \{ 0 , \ldots , J_{ s  , l  } - 1 \} $ satisfy $ h^i ( Y_{ s , l }' ) \cap h^j ( Y_{ t , k } ) \neq \varnothing $. Now, note that, for all $ x \in h^i ( Y_{ s , l }' ) \cap h^j ( Y_{ t , k } ) $, we have $ h^{ - 1 } ( x ) , \ldots , h^{ - j + 1 } ( x ) \notin  \bigsqcup_{ t = 1 }^T X_t $ and $ h^{ -j } ( x ) \in  \bigsqcup_{ t = 1 }^T X_t $. Similarly, we have $ h^{ - 1 } ( x ) , \ldots , h^{ - i + 1 } ( x ) \notin  \bigsqcup_{ t = 1 }^T X_t $ and $ h^{ -i } ( x ) \in  \bigsqcup_{ t = 1 }^T X_t' $. Since $ \bigsqcup_{ t = 1 }^T X_t' =  \bigsqcup_{ t = 1 }^T X_t $, this means that $ i = j $, so $ Y_{ s , l }' \cap Y_{ t , k } \neq \varnothing $. But by assumption this means we must have $ Y_{ s , l }' \subset Y_{ t , k } $, and therefore $ h^i ( Y_{ s , l }' ) \subset h^j ( Y_{ t , k } ) $. This proves that $ \sP_1 ( \sS' ) $ is finer than $ \sP_1 ( \sS ) $.
\end{proof}

\begin{lemma}\label{lemmaPartition1FinerIffPartition2Finer}
Let $ ( X , h ) $ be a zero-dimensional system, let $ \sP $ be a partition of $ X $, and let $ \systembasic $ and $ \systemarg{ \prime } $ be systems of finite first return time maps subordinate to $ \sP $ such that $ \bigsqcup_{ t = 1 }^T X_t = \bigsqcup_{ t = 1 }^{ T' } X_t' $. Then $ \sP_1 ( \sS' ) $ is finer than $ \sP_1 ( \sS ) $ if and only if $ \sP_2 ( \sS' ) $ is finer than $ \sP_2 ( \sS ) $.
\end{lemma}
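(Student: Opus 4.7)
The proof will hinge on one simple observation: the partition $\sP_2(\sS)$ is nothing other than the image of $\sP_1(\sS)$ under $h$, and likewise for $\sS'$. Indeed, by Definition \ref{defnSystemOfFiniteReturnTimeMaps}(\ref{defnSystemOfFiniteReturnTimeMaps(f)}),
\[
\sP_1(\sS) = \big\{ h^j(Y_{t,k}) : t \in \{1,\ldots,T\},\ k \in \{1,\ldots,K_t\},\ j \in \{0,\ldots,J_{t,k}-1\} \big\},
\]
and applying $h$ to each element increments $j$ by one, so the result is precisely $\sP_2(\sS)$. In shorthand, $\sP_2(\sS) = \{h(V) : V \in \sP_1(\sS)\}$, and analogously $\sP_2(\sS') = \{h(V') : V' \in \sP_1(\sS')\}$.

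Given this, the plan is to invoke the fact that $h$ is a homeomorphism (hence a bijection on $X$), so for any subsets $A, B \subseteq X$ we have $A \subseteq B$ if and only if $h(A) \subseteq h(B)$. Now suppose $\sP_1(\sS')$ is finer than $\sP_1(\sS)$. Given any element of $\sP_2(\sS')$, write it as $h(V')$ for some $V' \in \sP_1(\sS')$; by assumption $V' \subseteq V$ for some $V \in \sP_1(\sS)$, and applying $h$ gives $h(V') \subseteq h(V) \in \sP_2(\sS)$. For the reverse implication, if $\sP_2(\sS')$ is finer than $\sP_2(\sS)$, take $V' \in \sP_1(\sS')$; then $h(V') \in \sP_2(\sS')$ sits inside some $h(V) \in \sP_2(\sS)$ with $V \in \sP_1(\sS)$, so applying $h^{-1}$ gives $V' \subseteq V$.

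I do not expect any real obstacle here; the hypothesis $\bigsqcup_{t=1}^T X_t = \bigsqcup_{t=1}^{T'} X_t'$ is not even needed for the argument (it is presumably included to keep the hypotheses parallel to Lemma \ref{lemmaConditionForFinerPartitions}, which is the companion statement). The only thing to double-check is the indexing convention, namely that the index $j$ in $\sP_2$ runs over $\{1,\ldots,J_{t,k}\}$ while in $\sP_1$ it runs over $\{0,\ldots,J_{t,k}-1\}$, which is exactly the shift by one that $h$ implements.
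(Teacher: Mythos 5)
Your proof is correct, and it is genuinely simpler and more general than the one in the paper. The key observation — that $\sP_2(\sS)$ is exactly $\{h(V) : V \in \sP_1(\sS)\}$, since shifting $j$ through $\{0,\ldots,J_{t,k}-1\}$ to $\{1,\ldots,J_{t,k}\}$ is precisely composing with $h$ — is right, and once you have it, transport of the refinement relation along a bijection is immediate. You are also correct that the hypothesis $\bigsqcup_{t=1}^T X_t = \bigsqcup_{t=1}^{T'} X_t'$ plays no role in your argument.

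The paper takes a rather different route. It fixes $Y_{t,k}$, invokes the $\sP_1$-refinement (via Lemma \ref{lemmaConditionForFinerPartitions}, which does require the equal-bases hypothesis) to decompose $Y_{t,k}$ as a disjoint union of sets $Y_{s(m),l(m,n)}'$, and then uses the first return time map $\lambda_{\widehat{X}}$ of the common base set $\widehat{X}=\bigsqcup_t X_t = \bigsqcup_t X_t'$ to show $J_{t,k} = J'_{s(m),l(m,n)}$ for every piece of the decomposition, so that applying $h^{J_{t,k}}$ carries the decomposition of $Y_{t,k}$ into a decomposition of $h^{J_{t,k}}(Y_{t,k})$ by elements of $\sP_2(\sS')$. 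That argument genuinely uses the equal-bases hypothesis and the return-time structure. Your argument bypasses all of that by observing that each element of $\sP_2(\sS')$ — regardless of what $J'_{s,l}$ equals — is simply $h$ applied to an element of $\sP_1(\sS')$. This buys you a cleaner proof, independence from the hypothesis, and a statement that transparently reduces refinement of $\sP_2$'s to refinement of $\sP_1$'s. The one thing the paper's longer route gives as a byproduct is the identity $J_{t,k} = J'_{s(m),l(m,n)}$ for pieces of the decomposition, which can be useful information in later arguments, but it is not needed to prove this lemma.
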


\begin{proof}
($ \Rightarrow $). Set $ \widehat{ X } = \bigsqcup_{ t = 1 }^T X_t $ and set $ \widehat{ X }' = \bigsqcup_{ t = 1 }^{ T' } X_t' $ (note that $ \widehat{ X } = \widehat{ X }' $, but the distinction will be important in our reasoning later). Let $ t \in \{ 1 , \ldots , T \} $ and let $ k \in \{ 1 , \ldots , K_t \} $. By assumption, there are $ s ( 1 ) , \ldots , s ( M ) \in \{ 1 , \ldots , T' \} $ and there are $ l ( m , 1 ) , \ldots , l ( m , N_m ) \in \{ 1 , \ldots , K_{ s ( m ) }' \} $ for each $ m \in \{ 1 , \ldots , M \} $ such that $ \bigsqcup_{ m = 1 }^N \bigsqcup_{ n = 1 }^{ N_m } Y_{ s ( m ) , l ( m , n ) }' = Y_{ t , k } $. Then clearly $ \bigsqcup_{ m = 1 }^N \bigsqcup_{ n = 1 }^{ N_m } h^{ J_{ t , k } } ( Y_{ s ( m ) , l ( m , n ) }' ) = h^{ J_{ t , k } } ( Y_{ t , k } ) $. Now, for each $ x \in Y_{ t , k } $, we have $ \lambda_{ \widehat{ X } } ( x ) = \lambda_{ X_t } ( x ) = J_{ t , k } $. But then for all $ m \in \{ 1 , \ldots , M \} $, we have $ \lambda_{ X_{ s ( m ) }' } ( x ) = \lambda_{ \widehat{ X }' } ( x ) = \lambda_{ \widehat{ X } } ( x ) = J_{ t , k } $. Thus, for each $ m \in \{ 1 , \ldots , M \} $ and each $ n \in \{ 1 , \ldots , N_m \} $, we have $ J_{ t , k } = J_{ s ( m ) , l ( m , n ) }' $, and so $ h^{ J_{ t , k } } ( Y_{ s ( m ) , l ( m , n ) } ) $ is an element of $ \sP_2 ( \sS' ) $. Thus, $ \sP_2 ( \sS' ) $ is finer than $ \sP_2 ( \sS ) $. The proof of ($ \Leftarrow $) is analogous to this.
\end{proof}

The following proposition is not used later in the paper, but it gives intuition about how systems of finite first return time maps can be refined in the absence of periodic points. 

\begin{prop}\label{propAperiodicGivesLargeReturnTime}
Let $(X,h)$ be a zero-dimensional system such that, for any partition $\sP$ of $X$, $(X,h)$ admits a system of finite first return time maps subordinate to $\sP$. Then $(X,h)$ has no periodic points if and only if for every partition $\sP$ and every $ N \in \bZ_{ > 0 } $, there is a system $\systembasic$ of finite first return time maps subordinate to $\sP$ such that $J_{t,k} \geq N$ for all $t \in \{1,\ldots,T\}$ and all $k \in \{1,\ldots,K_t\}$.
\end{prop}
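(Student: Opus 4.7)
\medskip
\textbf{Proof plan.}

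\emph{Strategy.} For the ($\Rightarrow$) direction the plan is to refine $\sP$ to a partition $\sP'$ in which every member $U$ satisfies that $U, h(U), \ldots, h^{N-1}(U)$ are pairwise disjoint; then any system subordinate to $\sP'$ (which exists by hypothesis and is automatically subordinate to $\sP$) must have all first-return times at least $N$, because $X_t \subset U$ with $U, h(U), \ldots, h^{N-1}(U)$ disjoint forces $h^j(x) \notin X_t$ for $x \in X_t$ and $1 \leq j \leq N - 1$. For the ($\Leftarrow$) direction I proceed by contrapositive: a periodic point gives an a priori bound on some $J_{t,k}$, regardless of the partition.

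\emph{Forward direction.} Assume $(X,h)$ has no periodic points and fix $\sP$ and $N$. For each $x \in X$ the points $x, h(x), \ldots, h^{N-1}(x)$ are distinct, so since $X$ is compact, Hausdorff, and zero-dimensional I can choose pairwise disjoint compact open neighborhoods $W_0^x, \ldots, W_{N-1}^x$ of $x, h(x), \ldots, h^{N-1}(x)$ with $W_0^x$ contained in the element of $\sP$ that contains $x$. Setting
\[
V_x = \bigcap_{j=0}^{N-1} h^{-j}(W_j^x),
\]
I obtain a compact open neighborhood of $x$ with $h^j(V_x) \subset W_j^x$, so $V_x, h(V_x), \ldots, h^{N-1}(V_x)$ are pairwise disjoint, and $V_x$ lies in an element of $\sP$. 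Using compactness of $X$, cover $X$ by finitely many $V_{x_1}, \ldots, V_{x_n}$, then disjointify by setting $V_i' = V_{x_i} \setminus \bigcup_{l<i} V_{x_l}$; the resulting $\sP' = \{V_1', \ldots, V_n'\}$ (after discarding empty sets) is a partition of $X$ refining $\sP$, and each $V_i' \subset V_{x_i}$ still has the disjoint-iterates property. By hypothesis there is a system $\systembasic$ subordinate to $\sP'$; for each $t$ the base $X_t$ lies in some $V_i' \subset V_{x_i}$, so for any $x \in X_t$ and $1 \leq j \leq N-1$ one has $h^j(x) \in h^j(V_{x_i})$, which is disjoint from $V_{x_i} \supset X_t$. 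Hence $\lambda_{X_t}(x) \geq N$, giving $J_{t,k} \geq N$ for all $t$ and $k$. Since $\sP'$ refines $\sP$, $\sS$ is also subordinate to $\sP$.

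\emph{Backward direction.} Suppose by contradiction that $(X,h)$ has a periodic point $x$ of minimal period $p$. Apply the assumed conclusion with $\sP = \{X\}$ and $N = p+1$ to obtain a system $\systembasic$ with $J_{t,k} \geq p+1$ for all $t,k$. By Proposition \ref{propPropertiesOfX_t}(\ref{propPropertiesOfX_t(b)}), there exist $t$ and $j_0 \in \bZ$ with $x \in h^{j_0}(X_t)$, so $y := h^{-j_0}(x) \in X_t$ is also periodic of period $p$. Then $h^p(y) = y \in X_t$, so if $y \in Y_{t,k}$ we get $J_{t,k} = \lambda_{X_t}(y) \leq p$, contradicting $J_{t,k} \geq p+1$.

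\emph{Main obstacle.} The only nonroutine step is the construction of $\sP'$ in the forward direction, where I must simultaneously achieve refinement of $\sP$ and the disjoint-iterates property; this is exactly where aperiodicity (to guarantee distinctness of $x, h(x), \ldots, h^{N-1}(x)$) and zero-dimensionality (to separate them by compact open sets) come together.
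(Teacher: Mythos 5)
Your proof is correct and follows essentially the same approach as the paper: in the forward direction you construct, from aperiodicity, a partition whose members have pairwise disjoint iterates up to order $N-1$ (you fold the refinement of $\sP$ into the construction of $V_x$, whereas the paper takes a common refinement $\sP''$ afterward, but this is only a cosmetic difference), and in the backward direction you argue by contrapositive that a periodic point of period $p$ forces some $J_{t,k} \leq p$ in any system. Both steps match the paper's reasoning, with your backward direction invoking Proposition \ref{propPropertiesOfX_t}(\ref{propPropertiesOfX_t(b)}) where the paper appeals directly to the partition $\sP_1(\sS)$; these are interchangeable.
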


\begin{proof}
($\Rightarrow$). Let $ N \in \bZ_{ > 0 } $ and let $\sP$ be a partition of $X$. Since all points are aperiodic, for each $x \in X$, there is a compact open neighborhood $U_x$ such that $U_x, h(U_x),\ldots, h^{N-1}(U_x)$ are pairwise disjoint. Then $(U_x)_{x \in X}$ is a compact open cover of $X$, and hence has a finite compact open refinement. By taking appropriate intersections, this refinement can be taken to be a partition $\sP'$ of $X$. Let $\sP''$ be a partition of $X$ finer than both $\sP$ and $\sP'$ and let $ \systembasic $ be a system of finite first return time maps subordinate to $\sP''$. Since $\sP''$ is finer than $\sP$, this $ \sS $ is also subordinate to $\sP$. Since the first $ N - 1 $ iterates of any element of this partition are pairwise disjoint, we must have $J_{t,k} \geq N$ for all $t \in \{1,\ldots,T\}$ and all $k \in \{1,\ldots,K_t\}$.	

($\Leftarrow$). Suppose that $x \in X$ is a periodic point of $(X,h)$, let $ M \in \bZ_{ > 0 } $ satisfy $h^M(x)=x$, let $N \in \bZ_{ > 0 } $ be larger than $M$, and let $\sP$ be a partition of $X$.  Let $\systembasic$ be a system of finite first return time maps subordinate to $\sP$ and let $\sP_1(\sS)$ be as in Definition \ref{defnSystemOfFiniteReturnTimeMaps}. Since $\sP_1(\sS)$ is a partition of $X$, there are $t \in \{1,\ldots,T\}$, $k \in \{1,\ldots,K_t\}$, and $j \in \{0,\ldots,J_{t,k}-1\}$ such that $x \in h^j(Y_{t,k})$. It is clear that $h^{-j}(x) \in Y_{t,k}$ and  $h^{-j}(x) \in h^M(Y_{t,k})$, and so $J_{t,k} \leq M < N$.
\end{proof}

We now introduce the second main definition of this paper, whose connection to Definition \ref{defnSystemOfFiniteReturnTimeMaps} is Theorem \ref{thmFiberwiseIffAdmitsPartitions}.

\begin{defn}\label{defnFiberwiseEssentiallyMinimal}
Let $ ( X , h ) $ be a zero-dimensional system. We say that $ ( X , h ) $ is \emph{fiberwise essentially minimal} if there is a closed subset $ Z \subset X $ and a quotient map $ \psi: X \to Z $ such that
\begin{enumerate}[(a)]
\item \label{defnFiberwiseEssentiallyMinimal(a)} $ \psi|_Z : Z \to Z $ is the identity map.
\item \label{defnFiberwiseEssentiallyMinimal(b)} $ \psi \circ h = \psi $.
\item \label{defnFiberwiseEssentiallyMinimal(c)} For each $ z \in Z $, $ ( \psi^{ -1 } ( z ) , h|_{ \psi^{ -1 } ( z ) } ) $ is an essentially minimal system and $ z $ is in its minimal set.
\end{enumerate}
\end{defn}

\begin{exmps}\label{exmpsFiberwiseEssentiallyMinimal}
We provide some examples of fiberwise essentially minimal zero-dimensional systems.
\begin{enumerate}[(a)]
\item \label{exmpsFiberwiseEssentiallyMinimal(a)} For an zero-dimensional system $ ( X , h ) $, essentially minimality implies fiberwise essentially minimality. We can take $ Z $ to be $ \{ z \} $ for any $ z $ in the minimal set of $ X $ and then we can take $ \psi : X \to Z $ to be the (only possible) map $ x \mapsto z $.
\item \label{exmpsFiberwiseEssentiallyMinimal(b)}Let $ Z $ be a compact metrizable totally disconnected space and let $ ( Y , h' ) $ be an essentially minimal zero-dimensional system. Let $ X = Y \times Z $ and let $ h = h' \times \id $. Then $ ( X , h ) $ is an essentially minimal system, where we take $ \psi : X \to Z $ to be the map $ ( y , z ) \mapsto z $.
\begin{center}
\begin{tikzpicture}[scale=0.8]
\foreach \i in { 0 , 0.1 , 0.2 , 0.5 , 1 , 2 , 4 , 8 }
\foreach \j in { 0 , 1 , 2 , 2.3 , 2.4 , 2.5 , 2.6 , 2.7 , 3 , 4 , 5 }
{
\fill ( \i , \j ) circle(3pt);
}
\draw [very thick, decorate,decoration={brace,amplitude=10pt},xshift=-4pt,yshift=0pt]
(0,0) -- (0,5);
\node at ( -0.8 , 2.5 ) {\Large $ Y $};
\draw [very thick, decorate,decoration={brace,amplitude=10pt,mirror},xshift=0pt,yshift=-4pt]
(0,0) -- (8,0);
\node at ( 4 , -0.8 ) {\Large $ Z $};
\draw [<-> , very thick, color=red!50!black] ( 8.5 , 0 ) -- node[right] { $ h $ }( 8.5 , 5 );
\end{tikzpicture}
\end{center}

\item \label{exmpsFiberwiseEssentiallyMinimal(c)} Let $ Z = \bZ \cup \{ \infty \} $, let $ ( Y , h' ) $ be an essentially minimal zero-dimensional system. Let $ X = ( Y \times Z ) / ( Y \times  \{ \infty \} ) $ and let $ \pi : Y \times Z \to X $ be the quotient map. Let $ \widetilde{ h } = \pi ( h' \times \id ) : Y \times Z \to X $ and let $ h : X \to X $ be the continuous map satisfying $ h \circ \pi = \widetilde{ h } $, which is obtained from the universal property of the quotient map. One checks that $ h $ is a homeomorphism. Define $ \widetilde{ \psi } : Y \times Z \to Z $ by $ \widetilde{ \psi } ( ( y , z ) ) = z $ and then let $ \psi : X \to Z $ be the continuous map satisfying $ \psi \circ \pi = \widetilde{ \psi } $, which is obtained from the universal property of the quotient map. One checks that $ \psi $ itself is a quotient map, and then one checks that $ ( X , h ) $ is a fiberwise essentially minimal zero-dimensional system (using $ Z $ and $ \psi $ as defined above).
\begin{center}
\begin{tikzpicture}[scale=0.8]
\foreach \i in { 0.1 , 0.2 , 0.5 , 1 , 2 , 4 }
\foreach \j in { 0 , 1 , 2 , 2.3 , 2.4 , 2.5 , 2.6 , 2.7 , 3 , 4 , 5 }
{
\fill ( \i , \i*\j/4 ) circle(3pt);
}
\foreach \i in { -4 , -2 , -1 , -0.5 , -0.2 , -0.1 }
\foreach \j in { 0 , 1 , 2 , 2.3 , 2.4 , 2.5 , 2.6 , 2.7 , 3 , 4 , 5 }
{
\fill ( \i , \i*\j/-4 ) circle(3pt);
}
\fill ( 0 , 0 ) circle(3pt);
\draw [very thick, decorate,decoration={brace,amplitude=10pt},xshift=-4pt,yshift=0pt]
(-4,0) -- (-4,5);
\node at ( -4.8 , 2.5 ) {\Large $ Y $};
\draw [very thick, decorate,decoration={brace,amplitude=10pt,mirror},xshift=0pt,yshift=-4pt]
(-4,0) -- (4,0);
\node at ( 0 , -0.8 ) {\Large $ Z $};
\draw [<-> , very thick, color=red!50!black] ( 4.5 , 0 ) -- node[right] { $ h $ }( 4.5 , 5 );
\end{tikzpicture}
\end{center}

\end{enumerate}
\end{exmps}

\begin{rmk}
Let $ ( X , h ) $ be a fiberwise essentially minimal zero-dimensional system and let $ \psi $ and $ Z $ be as in Definition \ref{defnFiberwiseEssentiallyMinimal}. The term ``fiberwise" comes from the fact that such a system looks like a fiber bundle over $ Z $ where the homeomorphism acts fiberwise, except for the fact that the fibers need not be pairwise homeomorphic, and the bundle need not be locally trivial. In the context of $ C^* $-algebras, this terminology is somewhat standard, as the definition of $ C ( Z ) $-algebras do not require local triviality or pairwise fiber isomorphism. It turns out that $ C^* ( \bZ , X , h ) $ is indeed a $ C ( Z ) $-algebra (see Proposition \ref{propFiberwiseGivesContinuousField}). 
\end{rmk}

Given a $ C^* $-algebra $ A $, we denote its multiplier algebra by $ M ( A ) $ and we denote its center by $ Z ( A ) $. The following definition is standard (for example, it is Definition 3.2 of \cite{GaHiSa17}). 

\begin{defn}\label{defnC(Y)Algebras}
Let $ Y $ be a compact Hausdorff space. We say that a pair $ ( A , \theta ) $ is a \emph{$ C ( Y ) $-algebra} if $ A $ is a $ C^* $-algebra and $  \theta: C ( Y ) \to Z ( M ( A ) ) $ is a $ * $-homomorphism such that $ \theta ( C ( Y ) ) A = A $. We often suppress $ \theta $ from the notation, and just say that $ A $ is a $ C ( Y ) $-algebra.

Given $ y \in Y $, we define the \emph{fiber over $ y $} to be the quotient $ A_y = A / \theta( C_0 ( Y  \setminus \{ y \} ) ) A $. Given $ a \in A $, we denote the image of $ a $ in the quotient $ A_y $ by $ a_y $. 

We additionally say that $ A $ is a \emph{continuous $ C ( Y ) $-algebra} if for every $ a \in A $, the map $ N_a : Y \to \bR $ given by $ N_a ( y ) = \| a_y \| $ is continuous.
\end{defn}

\begin{lemma}\label{lemmaClosedMapLemma}
Let $ X , Y $ be topological spaces, let $ f : X \to Y $ be a closed map, let $ S $ be a subset of $ Y $, and let $ U $ be an open set of $ X $ containing $ f^{ -1 } ( S ) $. Then there is an open subset $ V $ of $ Y $ containing $ S $ such that $ f^{ - 1 } ( V ) \subset U $. 
\end{lemma}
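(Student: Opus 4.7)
The plan is to work by complementation, which is the standard trick whenever one wants to convert a property of a closed map into a statement about open saturations. Set $C = X \setminus U$, which is closed in $X$ since $U$ is open. Because $f$ is a closed map, the image $f(C)$ is a closed subset of $Y$, so its complement $V = Y \setminus f(C)$ is open in $Y$. This $V$ is the natural candidate, and I expect the rest of the proof to be pure set-chasing.

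Next I would verify the two required properties. For the containment $S \subset V$: if some $s \in S$ lay in $f(C)$, then $s = f(x)$ for some $x \in C = X \setminus U$; but this $x$ satisfies $x \in f^{-1}(\{s\}) \subset f^{-1}(S) \subset U$, contradicting $x \notin U$. For the containment $f^{-1}(V) \subset U$: if $x \in f^{-1}(V)$, then $f(x) \notin f(C)$, so in particular $x \notin C$, i.e., $x \in U$. Both verifications amount to the elementary observation that $f^{-1}(Y \setminus f(C)) \subset X \setminus C$.

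There is essentially no obstacle here; the lemma is a routine consequence of the definition of a closed map. The only subtlety worth flagging is that one must not confuse $f^{-1}(f(C))$ with $C$ itself (the former can be strictly larger), which is precisely why the argument passes through $V$ and complements rather than trying to build $V$ directly from $S$.
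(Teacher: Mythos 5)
Your proof is correct and follows the paper's own argument exactly: both take $V = Y \setminus f(X \setminus U)$, use closedness of $f$ to get $V$ open, and verify $S \subset V$ and $f^{-1}(V) \subset U$ by direct set-chasing.
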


\begin{proof}
Since $ X \setminus U $ is closed and since $ f $ is a closed map, $ f ( X \setminus U ) $ is a closed set. Since $ X \setminus U $ is disjoint from $ f^{ -1 } ( S ) $, $ f ( X \setminus U ) $ is disjoint from $ S $. Set $ V = Y \setminus f ( X \setminus U ) $. Then $ V $ is an open subset of $ Y $ that contains $ S $. Let $ x \in f^{ -1 } ( V ) $. If $ x \notin U $, then $ f ( x ) \in f ( X \setminus U ) $, a contradiction. Thus, $ f^{ -1 } ( V ) \subset U $.
\end{proof}

\begin{prop}\label{propFiberwiseGivesContinuousField}
Let $ ( X , h ) $ be a fiberwise essentially minimal zero-dimensional system and let $ Z $ be as in Definition \ref{defnFiberwiseEssentiallyMinimal}. Then $ C^* ( \bZ , X , h ) $ is a continuous $ C ( Z ) $-algebra. Moreover, for each $ z \in Z $, we have $ C^* ( \bZ , X , h )_z \cong C^* ( \bZ , \psi^{ -1 } ( z ) , h|_{ \psi^{ -1 } ( z ) } ) $.
\end{prop}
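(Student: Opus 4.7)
The plan is to define a $C(Z)$-algebra structure on $A := C^*(\bZ, X, h)$ by pulling back along $\psi$, identify the fibers using standard crossed product ideal theory, and then verify continuity by reducing to an openness property of $\psi$.

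First I define $\theta \colon C(Z) \to C(X) \subset A$ by $\theta(g) = g \circ \psi$. Condition (b) of Definition \ref{defnFiberwiseEssentiallyMinimal}, that $\psi \circ h = \psi$, gives $\alpha(\theta(g)) = \theta(g)$ for the automorphism $\alpha$ of $C(X)$ induced by $h$; hence $\theta(g)$ commutes with the standard unitary $u$ and, being central in $C(X)$, lies in $Z(A)$. Non-degeneracy $\theta(C(Z))A = A$ is immediate since $\theta(1) = 1$, so $A$ is a $C(Z)$-algebra.

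For the fiber identification, set $J_z = \{f \in C(X) : f|_{\psi^{-1}(z)} = 0\}$, an $\alpha$-invariant closed ideal of $C(X)$. I claim $J_z$ coincides with the closed ideal of $C(X)$ generated by $\theta(C_0(Z \setminus \{z\}))$: given $f \in J_z$ and $\epsilon > 0$, the open set $\{|f| < \epsilon\}$ contains $\psi^{-1}(z)$, so by Lemma \ref{lemmaClosedMapLemma} (applied to the closed surjection $\psi$) there is an open $V \ni z$ in $Z$ with $\psi^{-1}(V) \subset \{|f| < \epsilon\}$; a Urysohn function $g \in C(Z)$ with $g(z) = 0$, $g|_{Z \setminus V} = 1$, and $0 \leq g \leq 1$ makes $(g \circ \psi) f$ approximate $f$ within $2\epsilon$. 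It follows that the closed ideal $I_z := \theta(C_0(Z \setminus \{z\})) A$ coincides with $C^*(\bZ, J_z, \alpha|_{J_z})$, and since $\bZ$ is amenable (so the crossed product functor is exact), $A/I_z \cong C^*(\bZ, C(X)/J_z, \bar{\alpha}) \cong C^*(\bZ, \psi^{-1}(z), h|_{\psi^{-1}(z)})$.

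For continuity of $z \mapsto \|a_z\|$, upper semi-continuity holds in any $C(Y)$-algebra and drops out of Lemma \ref{lemmaClosedMapLemma}. Lower semi-continuity is the main obstacle. I would reduce it to showing the commutative algebra $C(X)$ is itself a continuous $C(Z)$-algebra (equivalent to $\psi$ being an open map), and then invoke the standard fact that the crossed product of a continuous $C(Y)$-algebra by a discrete amenable group acting $C(Y)$-linearly (here our $\alpha$ satisfies $\alpha \circ \theta = \theta$) is again a continuous $C(Y)$-algebra with the expected fibers. The heart of the argument is openness of $\psi$: given a compact open $U \subset X$ and $z_0 \in \psi(U)$, pick $x_0 \in U \cap \psi^{-1}(z_0)$ and use that $z_0$ lies in the minimal set of the essentially minimal system $(\psi^{-1}(z_0), h|_{\psi^{-1}(z_0)})$. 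Applying Proposition \ref{propEssentiallyMinimalReturnTimeMap}(d) inside this fiber together with density of the orbit of $z_0$ in its minimal set should produce $k \in \bZ$ with $z_0 \in h^k(U)$; then $h^k(U) \cap Z$ is an open neighborhood of $z_0$ in $Z$ contained in $\psi(h^k(U)) = \psi(U)$.
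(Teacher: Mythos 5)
The construction of $\theta$, the verification that $\theta(C(Z))$ is central, and the identification of fibers are all fine; your route to the fiber identification (via $\alpha$-invariance of $J_z$ and exactness of the crossed-product functor for $\bZ$) is a clean alternative to the paper's more hands-on argument using the Tietze extension theorem. The problem is the continuity step, and specifically your claim that $\psi$ is open.

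Your argument for openness is: pick $x_0 \in U \cap \psi^{-1}(z_0)$, and apply Proposition \ref{propEssentiallyMinimalReturnTimeMap} inside the fiber to get $k$ with $z_0 \in h^k(U)$. But Proposition \ref{propEssentiallyMinimalReturnTimeMap} applies to a compact open set that is a \emph{neighborhood of a point of the minimal set}, and the set $U \cap \psi^{-1}(z_0)$ need not meet the minimal set $M_{z_0}$ of the fiber at all. If $U \cap \psi^{-1}(z_0) \cap M_{z_0} = \varnothing$, then since $M_{z_0}$ is $h$-invariant we get $h^k(U) \cap \psi^{-1}(z_0) \cap M_{z_0} = \varnothing$ for every $k$, so in particular $z_0 \notin h^k(U)$ for any $k$, and the argument collapses. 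This is not a repairable oversight: $\psi$ genuinely need not be open. For instance, take an aperiodic essentially minimal zero-dimensional system $(Y', g')$ whose minimal set $Y$ is a proper subset of $Y'$, set $Z = \{0\} \cup \{1/n : n \geq 1\}$, and let $X = (Y' \times \{0\}) \cup \bigcup_{n \geq 1}(Y \times \{1/n\}) \subset Y' \times Z$ with $h = g' \times \mathrm{id}$, with $Z$ embedded via a fixed $y_0 \in Y$. This satisfies Definition \ref{defnFiberwiseEssentiallyMinimal}. Now pick a nonempty compact open $W \subset Y' \setminus Y$; then $W \times \{0\} = X \cap (W \times Z)$ is compact open in $X$, yet $\psi(W \times \{0\}) = \{0\}$, which is not open in $Z$. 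So the premise your reduction hinges on is false, and the continuity of $z \mapsto \|a_z\|$ cannot be obtained by this route. (The paper's own proof instead works directly with a finite-sum approximant $b = \sum_{k} f_k u^k$, a partition $\sP$, and Lemma \ref{lemmaClosedMapLemma}, avoiding any openness claim for $\psi$; that is the step you need to supply.) Separately, the "standard fact" about crossed products of continuous $C(Y)$-algebras by $C(Y)$-linear actions of amenable groups is real but nontrivial, and if you rely on it you should cite a reference rather than assert it.
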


\begin{proof}
Let $ \iota : Z \to X $ be the inclusion of $ Z $ in $ X $. Let $ \alpha $ be the automorphism of $ C ( X ) $ induced by $ h $, so that $ \alpha ( f ) ( x ) = f ( h^{ - 1 } ( x ) ) $ for all $ f \in  C ( X ) $. For convenience, denote $ C^* ( \bZ , X , h ) $ by $ A $. Since $ \psi \circ h = \psi $, note that we have $ \iota = \alpha \circ \iota $. We have a natural embedding $ C ( X ) \hookrightarrow A $, which, when composed with $ \iota $, gives us an embedding $ \theta : C ( Z ) \to A $. To see that the image of $ \theta $ is central, it suffices to show that it commutes with the standard unitary $ u $ in $ A $. So let $ f \in C ( Z ) $, and see that $ \theta ( f ) u = u \alpha( \theta ( f ) ) = u \theta ( f ) $ by the $ \alpha $-invariance of $ \iota $. Thus, $ A $ is a $ C ( Z ) $-algebra.

Now, let $ a \in A $, let $ \eps > 0 $, and let $ z \in Z $. There is an $ N \in \bZ_{ \geq 0 } $, a partition $ \sP $ of $ X $, and an element $ b = \sum_{ k = -N }^N f_k u^k $ where $ f_k \in C ( \sP ) $ for all $ k \in \{ - N , \ldots , N \} $ such that $ \| a - b \| < \eps/2 $. Then for all $ z' \in Z $, we have
\[
\big| \| a ( z' ) \| - \| b ( z' ) \| \big| \leq \| a ( z' ) - b ( z' ) \| \leq \| a - b \| < \eps/2 .
\]
Let $ U_1 , \ldots U_L $ be the elements of $ \sP $ that have nonempty intersection with $ \psi^{ -1 } ( z ) $, and define $ U = \bigsqcup_{ l = 1 }^L U_l $. Since $ \psi $ is a continuous map from a compact space to a Hausdorff space, $ \psi $ is a closed map, and so we can apply Lemma \ref{lemmaClosedMapLemma} with $ Z $ in place of $ Y $, $ \psi $ in place of $ f $, and $ \{ z \} $ in place of $ S $ to get an open set $ V $ of $ Z $ containing $ z $ such that $ \psi^{ -1 } ( V ) \subset U $. Then notice that since $ f_k $ is constant on $ \psi^{ -1 } ( V ) $ for all $ k $, we have $ \| b ( z ) \| = \| b ( z' ) \| $ for all $ z' \in V $. Thus, for all $ z' \in V $, we have
\[
\big| \| a ( z' ) \| - \| b ( z ) \| \big| =  \big| \| a ( z' ) \| - \| b ( z' ) \| \big| < \eps/2 ,
\]
and so 
\[
\big| \| a ( z ) \| - \| a ( z' ) \| \big| < \eps.
\]
Thus, $ A $ is a continuous $ C ( Z ) $-algebra.

Finally, let $ z \in Z $. By Definition \ref{defnFiberwiseEssentiallyMinimal}, we see $ ( \psi^{ -1 } ( z ) , h|_{ \psi^{ -1 } ( z ) } ) $ is a zero-dimensional system, and so $ C^* ( \bZ , \psi^{ -1 } ( z ) , h|_{ \psi^{ -1 } ( z ) } ) $ makes sense. Define a map $ \vphi : C^* ( \bZ , X , h )_z \to C^* ( \bZ , \psi^{ -1 } ( z ) , h|_{ \psi^{ -1 } ( z ) } ) $ by sending elements of the form $ \left( \sum_{ k = -N }^N f_k u^k \right)_z $ to $ \sum_{ k = -N }^N f_k|_{ \psi^{ -1 } ( z ) } u^k $. Clearly $ \vphi $ respects pointwise operations, is bounded, and is injective. By the Tietze extension theorem, $ \vphi $ is surjective. We now show that $ \vphi $ is well defined. Let $ f , g \in C ( X ) $ and let $ x \in \psi^{ -1 } ( z ) $ and suppose that $ f ( x ) \neq g ( x ) $. Then there is no $ h \in C_0 ( X \setminus \psi^{ -1 } ( z ) ) $ such that $ f + h = g $. This proves that $ \vphi $ is well-defined. This proves that $ \vphi $ is an isomorphism of $ C^* $-algebras and proves the lemma.
\end{proof}

\section{Theorems}\label{sectionTheorems}

We now introduce the main theorems of the paper. Theorem \ref{thmFiberwiseIffAdmitsPartitions} and Theorem \ref{thmMainTheorem} will take significant work to prove, and their proofs will be located in Section \ref{sectionProof1} and Section \ref{sectionProof2}, respectively. Examples illustrating the proof of Theorem \ref{thmMainTheorem} are provided at the end of Section \ref{sectionProof2}.

\begin{thm}\label{thmFiberwiseIffAdmitsPartitions}
Let $ ( X , h ) $ be a zero-dimensional system. Then $ ( X , h ) $ is fiberwise essentially minimal if and only if for any partition $ \sP $ of $ X $, $ ( X , h ) $ admits a system of finite first return time maps subordinate to $ \sP $.
\end{thm}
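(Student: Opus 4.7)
The plan is to prove the two implications separately. The forward direction ($\Rightarrow$) is a direct compactness/covering argument; the reverse direction ($\Leftarrow$) is where the main work lies, as we must construct the retraction $\psi: X \to Z$ out of the combinatorial data supplied by the systems.

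For ($\Rightarrow$): assume fiberwise essential minimality with retraction $\psi: X \to Z$, and let $\sP$ be a partition of $X$. For each $z \in Z$, pick a compact open $V_z \subset X$ containing $z$ that is contained in an element of $\sP$. Because $h$ preserves fibers (Definition \ref{defnFiberwiseEssentiallyMinimal}(b)) and $z$ lies in the minimal set of the essentially minimal fiber $(\psi^{-1}(z), h|_{\psi^{-1}(z)})$, applying Proposition \ref{propEssentiallyMinimalReturnTimeMap}(a) inside the fiber to the compact open neighborhood $V_z \cap \psi^{-1}(z)$ yields $\psi^{-1}(z) \subset \bigcup_{j \in \bZ} h^j(V_z)$, and compactness of $\psi^{-1}(z)$ bounds the needed exponents by some $N_z$. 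Applying Lemma \ref{lemmaClosedMapLemma} to the closed map $\psi$ then produces a clopen neighborhood $W_z$ of $z$ in $Z$ with $\psi^{-1}(W_z) \subset \bigcup_{|j| \le N_z} h^j(V_z)$. Compactness and total disconnectedness of $Z$ yield a finite clopen partition $\{W_1, \ldots, W_T\}$ of $Z$ refining this cover with corresponding $V_{z_t}, N_{z_t}$. Set $X_t = V_{z_t} \cap \psi^{-1}(W_t)$; a short calculation using fiber-invariance shows that for every $x \in X_t$, some iterate $h^m(x)$ with $1 \le m \le 2 N_{z_t} + 1$ lies in $X_t$, so $\lambda_{X_t}$ is bounded on $X_t$ and takes finitely many values. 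Partitioning $X_t$ by these values gives the $Y_{t,k}$'s, and the remaining conditions of Definition \ref{defnSystemOfFiniteReturnTimeMaps} follow routinely from the construction.

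For ($\Leftarrow$): define an equivalence relation on $X$ by $x \sim y$ iff for every system $\sS$ of finite first return time maps (subordinate to any partition), $x$ and $y$ belong to the same clopen $h$-invariant set $X^{(t)} := \bigcup_j h^j(X_t)$; by Proposition \ref{propPropertiesOfX_t}, each equivalence class $[x]$ is closed and $h$-invariant. The key claim is that $([x], h|_{[x]})$ is essentially minimal. Minimal subsets exist by Zorn, and if $M_1, M_2 \subset [x]$ were distinct (hence disjoint) minimal sets, one chooses a clopen $V \subset X$ separating them and applies the hypothesis to $\sP = \{V, X \setminus V\}$. Each base $X_t$ of the resulting system is contained in $V$ or in $X \setminus V$; since each $X^{(t)}$ is clopen and $h$-invariant, minimality forces each $M_i$ to lie in a unique $X^{(t_i)}$. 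If $t_1 = t_2 = t$, then both $M_i \cap X_t \ne \varnothing$, and whichever side of $\sP$ contains $X_t$ immediately violates either $M_1 \subset V$ or $M_2 \cap V = \varnothing$; so $t_1 \ne t_2$, contradicting $M_1 \sim M_2$.

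To produce the actual $Z \subset X$ and the continuous retraction $\psi$, the plan is to fix a generating sequence $(\sP_n)$ of partitions and inductively choose systems $\sS^n$ subordinate to $\sP_n$ whose bases are nested, i.e.\ each $X_{t'}^{n+1}$ is contained in some $X_t^n$. Proposition \ref{propReturnTimeMapsGiveFinerPartitions} allows refinement of the associated partitions without changing the bases, and an inductive ``carve out smaller bases within existing ones and reapply the hypothesis'' step should achieve the nesting. Then for each equivalence class $F$ the intersection $\bigcap_n X_{t(F,n)}^n$ of the nested bases meeting $F$ should collapse to a single point of the unique minimal set $M_F$; set $z_F$ equal to this point, define $Z = \{z_F : F\}$, and $\psi(x) = z_{[x]}$. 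The principal obstacle is precisely this last piece: executing the nested selection so that $Z$ ends up closed and $\psi$ ends up continuous, and verifying condition (c) of Definition \ref{defnFiberwiseEssentiallyMinimal} (that $z_F$ indeed lies in the minimal set, not merely in $F$), which I expect to dominate the work of the proof.
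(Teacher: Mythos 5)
Your proposal is sound in outline, and both directions start on viable paths, but the reverse direction has a real gap that you yourself flag as ``the principal obstacle.''

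For $(\Rightarrow)$: your covering argument is correct but more elaborate than necessary. Rather than choosing compact open $V_z$, bounding return times by compactness, invoking Lemma \ref{lemmaClosedMapLemma} to shrink to a clopen $W_z$, and then refining a finite subcover to a partition of $Z$, the paper simply takes $X_1',\ldots,X_T'$ to be the elements of $\sP$ meeting $Z$ and sets $X_t = \psi^{-1}(X_t'\cap Z)\cap X_t'$; finiteness of $\ran(\lambda_{X_t})$ then follows from pointwise finiteness (Proposition \ref{propEssentiallyMinimalReturnTimeMap}) and continuity of $\lambda_{X_t}$ (Proposition \ref{propLambdaUContinuous}). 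Your construction also works, but it buys nothing over the one-line definition.

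For $(\Leftarrow)$: the intrinsic equivalence $x\sim y$ iff $x,y$ lie in the same $X^{(t)}=\bigcup_j h^j(X_t)$ for every system $\sS$ and every $t$, together with the clopen-separation argument for uniqueness of the minimal set in each class, is correct and is a genuinely different route from the paper. The paper never states essential minimality of the classes in this intrinsic form; instead it applies Theorem 1.1 of \cite{HermanPutnamSkau92} to $\psi^{-1}(z)$ at the very end, which delivers essential minimality \emph{and} the requirement that $z$ lies in the minimal set in one stroke. Your separation argument only gives the first of those two, so part (c) of Definition \ref{defnFiberwiseEssentiallyMinimal} is not fully addressed by it.

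The genuine gap is the construction of $Z$ and $\psi$. Your plan to fix a generating sequence $(\sP_n)$ and inductively choose systems with nested bases is exactly Construction \ref{constSequenceOfPartitions}, but ``carve out smaller bases within existing ones and reapply the hypothesis'' does not directly work: the paper needs Lemma \ref{lemmaReturnTimeMapsNewBasesFromOldPartition} and Lemma \ref{lemmaNewBasesUnderOldBases} to translate bases of a new system living inside $\sP_1(\sS)$ back into bases contained in those of $\sS$. Beyond that, one still has to prove that the $Z$ so obtained is closed and meets each class exactly once (Lemma \ref{lemmaZIsClosed}), that $\psi$ is continuous and a quotient map with $\psi\circ h = \psi$, and — since your essential-minimality argument is stated for the intrinsic classes while the fibers of the constructed $\psi$ are the sequence-dependent sets $\bigcap_n\bigcup_j h^j(X_{t_n}^{(n)})$ — that these two partitions of $X$ agree. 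That last claim is true (for large $n$, $h^{j_0}(X_{t_n}^{(n)})$ shrinks into $X_t$ by compactness), but it needs an argument; otherwise your step 3 concerns the wrong fibers. As written, essentially all of Lemmas \ref{lemmaReturnTimeMapsNewBasesFromOldPartition}--\ref{lemmaZIsClosed} and the final two paragraphs of the paper's proof remain to be supplied.
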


The following theorem is a generalization of Theorem 2.1 of \cite{Putnam90}, which states that if $ ( X , h ) $ is a minimal zero-dimensional system, then $ C^* ( \bZ , X , h ) $ is an A$ \bT $-algebra.

\begin{thm}\label{thmMainTheorem}
Let $ ( X , h ) $ be a fiberwise essentially minimal zero-dimensional system. Then \linebreak $ C^*( \bZ , X , h ) $ is an A$ \bT $-algebra.
\end{thm}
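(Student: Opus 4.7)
The plan is to generalize the strategy of Theorem 2.1 of \cite{Putnam90} (the minimal case) by exploiting the systems of finite first return time maps introduced in Section \ref{sectionPrelim}, whose existence in the fiberwise essentially minimal setting is guaranteed by Theorem \ref{thmFiberwiseIffAdmitsPartitions}. The goal is to exhibit $C^*(\bZ, X, h)$ as the closure of an increasing union of finite direct sums of circle algebras, one indexed by each column of a finer and finer sequence of systems of finite first return time maps.

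First I would fix a generating sequence of partitions $(\sP_n)$ of $X$, which exists by zero-dimensionality and metrizability. By Theorem \ref{thmFiberwiseIffAdmitsPartitions}, for each $n$ there is a system $\sS_n$ of finite first return time maps subordinate to $\sP_n$. Proposition \ref{propReturnTimeMapsGiveFinerPartitions} allows me to assume that both $\sP_1(\sS_n)$ and $\sP_2(\sS_n)$ refine $\sP_n$. By a further refinement argument, I would arrange that the bases $\bigsqcup_t X_t^{(n)}$ form a nested decreasing sequence in $n$, so that Lemmas \ref{lemmaConditionForFinerPartitions} and \ref{lemmaPartition1FinerIffPartition2Finer} apply and $\sP_i(\sS_{n+1})$ refines $\sP_i(\sS_n)$ for $i \in \{1,2\}$.

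Next I would construct, for each $n$, a $*$-subalgebra $A_n \subseteq C^*(\bZ, X, h)$ that is a finite direct sum of circle algebras, with one summand for each column $(t,k)$ of $\sS_n$. The generators of $A_n$ would be: functions in $C(X)$ constant on elements of $\sP_1(\sS_n)$; partial isometries of the form $\chi_{h^{j+1}(Y_{t,k}^{(n)})} u$ for $0 \leq j \leq J_{t,k}^{(n)}-2$, which shift within each column and supply the matrix units of an $M_{J_{t,k}^{(n)}}$ piece; and a wrap-around generator incorporating $u^{J_{t,k}^{(n)}}$ restricted to $Y_{t,k}^{(n)}$, which encodes the return map and supplies the $C(\bT)$ factor of the summand. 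The refinement condition on $(\sS_n)$ from step one then ensures $A_n \subseteq A_{n+1}$.

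Finally I would show $\overline{\bigcup_n A_n} = C^*(\bZ, X, h)$. Since $(\sP_n)$ is generating, $C(X) \subseteq \overline{\bigcup_n A_n}$, and the standard unitary $u$ is approximated in norm by the wrap-around generators of the $A_n$, as these agree with $u$ except on the tops of the towers of $\sS_n$, whose contribution to any fixed element of the crossed product becomes negligible as the partitions refine. The main obstacle I anticipate is twofold. First, carrying out the refinement argument on $(\sS_n)$ with enough compatibility to ensure the subalgebras $A_n$ genuinely nest, which will likely use Lemmas \ref{lemmaConditionForFinerPartitions} and \ref{lemmaPartition1FinerIffPartition2Finer} in nontrivial ways and may require additional preparatory lemmas. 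Second, identifying the precise circle algebra structure of each $A_n$ in the presence of multiple towers ($T^{(n)} > 1$), a genuinely new feature compared to the minimal case of \cite{Putnam90} where $T = 1$; the continuous $C(Z)$-algebra structure established in Proposition \ref{propFiberwiseGivesContinuousField} may provide useful organizational guidance here, with each tower index $t$ corresponding roughly to a region of the base space $Z$.
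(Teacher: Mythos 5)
Your high-level plan is pointed in the right direction (systems of finite first return time maps via Theorem \ref{thmFiberwiseIffAdmitsPartitions}, finite direct sums of circle algebras built column by column, following \cite{Putnam90}), but there is a genuine gap at the central step. You assert that the standard unitary $u$ is approximated in norm by the wrap-around generators of $A_n$ because they ``agree with $u$ except on the tops of the towers, whose contribution to any fixed element of the crossed product becomes negligible as the partitions refine.'' This is false in the $C^*$-norm: while the set on which $u$ and the wrap-around unitary disagree shrinks, the operator norm of their difference stays of order $1$ no matter how fine the partition. This is precisely the difficulty Putnam had to overcome, and the missing idea is Berg's technique. The paper (and \cite{Putnam90}) takes an $N$-th root $w$ of the finite-spectrum ``error'' unitary $\chi_Y v_2 v_1^* \chi_Y$ (Lemma \ref{lemmaFiniteSpectrumLogarithm}, so $\|w - \chi_Y\| \leq \pi/N$) and conjugates by a gradual twist $z = \sum_{j=0}^{N-1} \chi_{h^j(Y)} u^j w^{N-j} u^{-j} \chi_{h^j(Y)} + \cdots$, which spreads the discrepancy over $N$ levels of the tower so that the conjugated algebra contains a unitary $u'$ with $\|u' - u\| < \eps$. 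Without this, the closure of $\bigcup_n A_n$ will not contain $u$ and the argument collapses.

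Two further issues. First, ``one circle summand per column $(t,k)$'' is not the right structure: the return map $h^{J_{t,k}}$ sends $Y_{t,k}$ back into $X_t$ but scatters it across the various $Y_{t,k'}$, so the circle is associated to an entire base $X_t$, not a single column. The paper in fact obtains $\bigoplus_t \bigl( (C(S^1) \otimes M_{J_{t,1}}) \oplus \bigoplus_{k\geq 2} M_{J_{t,k}} \bigr)$, and showing the circle factor genuinely appears (i.e.\ the relevant unitary has full spectrum $S^1$) requires a nontrivial $K$-theoretic argument (Lemmas \ref{lemmaK1ElementsFromInvariantProjections}--\ref{lemmaK1Corners}). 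Second, the paper does not build a literally nested chain $A_n \subseteq A_{n+1}$; it proves a local approximation statement ($C(\sP)$ and an $\eps$-approximant of $u$ inside one circle algebra) and then invokes semiprojectivity of circle algebras to manufacture the direct system, which avoids the compatibility bookkeeping you identify as your first obstacle. Finally, periodic points in the fiberwise essentially minimal setting cause real trouble (short towers kill the room needed for the Berg twist); the paper spends Lemmas \ref{lemmaIteratesAreContainedInThePartition} through \ref{lemmaTheLastLemma} isolating the periodic behavior into the $Y_{t,1}$ columns, a complication your outline does not engage with.
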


\begin{thm}\label{thmRR0}
Let $ ( X , h ) $ be a fiberwise essentially minimal zero-dimensional system with no periodic points. Then $ C^* ( \bZ , X , h ) $ has real rank zero.
\end{thm}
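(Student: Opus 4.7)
The plan is to reduce the question fiberwise via the continuous $C(Z)$-algebra structure of $C^*(\bZ,X,h)$ established in Proposition \ref{propFiberwiseGivesContinuousField}. Since $(X,h)$ is fiberwise essentially minimal with no periodic points, for each $z \in Z$ the fiber system $(\psi^{-1}(z), h|_{\psi^{-1}(z)})$ is an essentially minimal zero-dimensional system with no periodic points (the absence of periodic points is inherited from $(X,h)$). Hence by the main real-rank-zero theorem of \cite{HermanPutnamSkau92}, each fiber $C^*(\bZ, \psi^{-1}(z), h|_{\psi^{-1}(z)}) \cong C^*(\bZ, X, h)_z$ has real rank zero. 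The closed subset $Z$ of the totally disconnected space $X$ is itself zero-dimensional, which is what will allow local approximations to be glued globally.

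To pass from fiberwise to global real rank zero, I would proceed as follows. Fix a self-adjoint $a \in C^*(\bZ, X, h)$ and $\eps > 0$. For each $z \in Z$, choose a self-adjoint element $b^{(z)} \in C^*(\bZ, X, h)_z$ of finite spectrum with $\|a_z - b^{(z)}\| < \eps/3$, and write $b^{(z)} = \sum_i \lambda_i^{(z)} p_i^{(z)}$ as a real linear combination of orthogonal projections. In a continuous $C(Z)$-algebra, a projection in the fiber $A_z$ lifts to a projection in $A|_V$ for some open $V \ni z$: one lifts to an approximate self-adjoint, uses continuity of the fiber norm map $z' \mapsto \|(a^2 - a)_{z'}\|$ to find a neighborhood where the spectrum is clustered near $\{0,1\}$, and applies functional calculus. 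Applying this iteratively inside the corners cut by previously lifted projections (each of which is itself a continuous $C(V)$-algebra) produces mutually orthogonal lifts. Using zero-dimensionality of $Z$, take $V_z$ clopen and shrink if necessary so that the assembled finite-spectrum self-adjoint lift $\tilde b^{(z)}$ satisfies $\|(a - \tilde b^{(z)})_{z'}\| < 2\eps/3$ for every $z' \in V_z$, again by continuity of the fiber norms.

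Since $Z$ is compact and zero-dimensional, a finite subcover of $(V_z)_{z \in Z}$ can be refined to a finite pairwise disjoint clopen cover $V_1, \ldots, V_L$, with associated finite-spectrum lifts $\tilde b^{(l)} \in C^*(\bZ,X,h)|_{V_l}$. Since $\psi^{-1}(V_l)$ is clopen and $h$-invariant, each $C^*(\bZ,X,h)|_{V_l}$ is a direct summand, so $b = \bigoplus_{l=1}^L \tilde b^{(l)}$ is a well-defined self-adjoint finite-spectrum element of $C^*(\bZ, X, h)$ satisfying $\|a - b\| < \eps$, proving real rank zero. I expect the main technical obstacle to be the successive projection-lifting step, which requires careful use of continuity of the fiber norms to preserve orthogonality and uniform error control across fibers; this reduction to the fiberwise case is plausibly the simplification Putnam suggested, since it bypasses any direct analysis of the A$\bT$-inductive limit structure produced in Theorem \ref{thmMainTheorem}.
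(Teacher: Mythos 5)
Your reduction is exactly the one the paper uses: Proposition \ref{propFiberwiseGivesContinuousField} makes $C^*(\bZ,X,h)$ a continuous $C(Z)$-algebra whose fibers are crossed products of essentially minimal systems with no periodic points, and \cite{HermanPutnamSkau92} gives real rank zero for each fiber. The only divergence is the final gluing step: you sketch a projection-lifting argument over clopen neighborhoods exploiting zero-dimensionality of $Z$, whereas the paper simply invokes Theorem~2.1 of \cite{Pasnicu05}, which states precisely that a $C(Z)$-algebra over a compact zero-dimensional metric space has real rank zero if all its fibers do. In effect you are re-deriving Pasnicu's theorem rather than citing it. Your sketch is sound in outline --- corners by projections are again continuous $C(Z)$-algebras with the expected corner fibers, so the iterative lifting can be carried out, and a clopen $V\subset Z$ makes $\theta(\chi_V)$ a central projection in the unital algebra, hence a direct summand --- but carrying out the successive lifting with uniform error control across fibers is substantially more work than the paper's two-line proof. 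Citing Pasnicu offloads that technical content; your route buys self-containedness at the cost of essentially rewriting his proof in the zero-dimensional case.
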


\begin{proof}
 Let $ Z $ be as in Definition \ref{defnFiberwiseEssentiallyMinimal}. By Proposition \ref{propFiberwiseGivesContinuousField}, $ C ( \bZ , X , h ) $ is a continuous $ C ( Z ) $-algebra such that for each $ z \in Z $, the fiber $ C^* ( \bZ , X , h )_z $ is isomorphic to $ C^* ( \bZ , \psi^{ -1 } ( z ) , h_{ \psi^{ -1 } ( z ) } ) $. By Definition \ref{defnFiberwiseEssentiallyMinimal}, $ ( \psi^{ -1 } ( z ) , h|_{ \psi^{ - 1 } ( z ) } ) $ is an essentially minimal zero-dimensional system, and since $ ( X , h ) $ has no periodic points, neither does $ ( \psi^{ -1 } ( z ) , h|_{ \psi^{ - 1 } ( z ) } ) $. Thus, by the comments following Theorem 8.3 of \cite{HermanPutnamSkau92}, the fibers of $ C^* ( \bZ ,  X , h ) $ all have real rank zero. By Theorem 2.1 of \cite{Pasnicu05}, $ C^* ( \bZ , X , h ) $ has real rank zero.
\end{proof}

Thus, crossed products $ C^* $-algebras associated to fiberwise essentially minimal zero-dimensional $ C^* $-algebras with no periodic points are classifiable by \cite{DadarlatGong97}.

\section{Proof of Theorem \ref{thmFiberwiseIffAdmitsPartitions}}\label{sectionProof1}

The following lemma tells us that we can ``refine" a system $ \sS $ (as in Proposition \ref{propReturnTimeMapsGiveFinerPartitions}) to a system $ \sS' $ such that we can create a new system where each base in the system is equal to $ Y_{ t , k }' $ for some $ t $ and $ k $. This is lemma is used in the proof of Lemma \ref{lemmaNewBasesUnderOldBases}, which tells us that we can choose systems whose bases are ``under" a given system; this fact is crucial to Construction \ref{constSequenceOfPartitions}.
 
\begin{lemma}\label{lemmaReturnTimeMapsNewBasesFromOldPartition}
Let $ ( X , h ) $ be a zero-dimensional system, let $ \sP $ be a partition of $ X $, and let $ \systembasic $ be a system of finite first return time maps subordinate to $ \sP $. Let $ \sP_1 ( \sS ) $ and $ \sP_2 ( \sS ) $ be as in Definition \ref{defnSystemOfFiniteReturnTimeMaps} and let $ \systemarg{ ( 1 ) } $ be a system of finite first return time maps subordinate to $ \sP_1 ( \sS ) $. Then there is a system 
\[
\systemarg{ \prime } 
\]
of finite first return time maps subordinate to $\sP$ and a system 
\[
\systemarg{ ( 1 ) \prime } 
\]
of finite first return time maps subordinate to $ \sP_1 ( \sS ) $ such that:
\begin{enumerate}[(a)]
\item \label{lemmaReturnTimeMapsNewBasesFromOldPartition(a)} We have $ T' = T $, and for all $ t \in \{ 1 , \ldots , T' \} $, we have $ X_t = X_t' $.
\item \label{lemmaReturnTimeMapsNewBasesFromOldPartition(b)} The partition $ \sP_1 ( \sS' ) $ is finer than $ \sP_1 ( \sS ) $ and the partition $ \sP_2 ( \sS' ) $ is finer than $ \sP_2 ( \sS ) $.
\item \label{lemmaReturnTimeMapsNewBasesFromOldPartition(c)} For each $ s \in \{ 1 , \ldots , T^{ ( 1 ) \prime } \} $, there is a $ t_s \in \{ 1 , \ldots , T \} $ and a $ k_s \in \{ 1 , \ldots , K_{ t_s } \} $ such that $ X_t^{ ( 1 ) \prime } = Y_{ t_s , k_s }' $.
\end{enumerate}
\end{lemma}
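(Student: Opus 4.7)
The plan is to apply Proposition \ref{propReturnTimeMapsGiveFinerPartitions} with a carefully chosen refining partition to obtain $\sS'$, and then to construct $\sS^{(1)\prime}$ using the tower structure of $\sS^{(1)}$ translated down to level zero in the $\sP_1(\sS)$ tower. For each $s \in \{1,\ldots,T^{(1)}\}$, since $X_s^{(1)}$ lies in an element of $\sP_1(\sS)$, I would write that element as $h^{j_s}(Y_{\tau(s),\kappa(s)})$ for suitable indices and set $\widetilde{X}_s^{(1)} = h^{-j_s}(X_s^{(1)}) \subset Y_{\tau(s),\kappa(s)}$. A key observation is that $\bigcup_{j \in \bZ} h^j(\widetilde{X}_s^{(1)}) = \bigcup_{j \in \bZ} h^j(X_s^{(1)})$, and that the Kakutani--Rokhlin partition of $X_s^{(1)}$ by return times translates back to a partition $\{h^{-j_s}(Y_{s,l}^{(1)})\}_l$ of $\widetilde{X}_s^{(1)}$ with constant first return times $J_{s,l}^{(1)}$ to $\widetilde{X}_s^{(1)}$.

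First I would build a partition $\sP''$ of $X$ that is a common refinement of $\sP_1(\sS)$, $\sP_2(\sS)$, and the two-element partitions $\{\widetilde{X}_s^{(1)}, X \setminus \widetilde{X}_s^{(1)}\}$ for $s = 1,\ldots,T^{(1)}$. Applying Proposition \ref{propReturnTimeMapsGiveFinerPartitions} to $\sS$ and $\sP''$ yields $\sS'$ subordinate to $\sP$ with $T'=T$, $X_t' = X_t$, and $\sP_1(\sS'), \sP_2(\sS')$ finer than $\sP''$, which gives conclusions (a) and (b) (invoking Lemma \ref{lemmaPartition1FinerIffPartition2Finer} where convenient). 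In particular, since $\sP_1(\sS')$ refines $\sP''$, each $Y_{\tau(s),k'}'$ is either entirely inside $\widetilde{X}_s^{(1)}$ or disjoint from it, so $\widetilde{X}_s^{(1)}$ decomposes as a disjoint union of certain $Y$-pieces of $\sS'$.

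For $\sS^{(1)\prime}$, I would set $T^{(1)\prime}=T^{(1)}$, select, for each $s$, a piece $X_s^{(1)\prime} = Y_{\tau(s),k_s^*}'$ contained in $\widetilde{X}_s^{(1)}$, and then define its return-time subpartition $Y_{s,l}^{(1)\prime}$ and return times $J_{s,l}^{(1)\prime}$ from the first return map of $h$ to $X_s^{(1)\prime}$. Conclusion (c) is then immediate. Disjointness of orbits across different $s$ in Definition \ref{defnSystemOfFiniteReturnTimeMaps}(f) follows from Proposition \ref{propPropertiesOfX_t} applied to $\sS^{(1)}$, since $\bigcup_j h^j(X_s^{(1)\prime}) \subset \bigcup_j h^j(X_s^{(1)})$ and the latter are pairwise disjoint over $s$.

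The main obstacle will be verifying that each $\bigcup_j h^j(X_s^{(1)\prime})$ actually equals the full $\bigcup_j h^j(X_s^{(1)})$, so that Definition \ref{defnSystemOfFiniteReturnTimeMaps}(f) holds as a partition of $X$. The set $\widetilde{X}_s^{(1)}$ has this orbit-coverage property by construction, but a sub-$Y$-piece of it might miss some $h$-orbits passing through $\widetilde{X}_s^{(1)}$. The cleanest way to resolve this is to strengthen $\sP''$ so that Proposition \ref{propReturnTimeMapsGiveFinerPartitions} leaves $\widetilde{X}_s^{(1)}$ intact as a single $Y$-piece of $\sS'$, allowing the choice $X_s^{(1)\prime} = \widetilde{X}_s^{(1)}$ with its return-time partition coming directly from the translated tower of $\sS^{(1)}$. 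This amounts to arranging that the forward iterates $h^j(\widetilde{X}_s^{(1)})$ for $j \in \{0,\ldots,J_{\tau(s),\kappa(s)}\}$ are unions of elements of $\sP''$, so that the auxiliary sets $A_{\tau(s),\kappa(s)}$ appearing in the proof of Proposition \ref{propReturnTimeMapsGiveFinerPartitions} never partially split $\widetilde{X}_s^{(1)}$; careful bookkeeping of the final iterate $h^{J_{\tau(s),\kappa(s)}}(\widetilde{X}_s^{(1)})$, which lands back in $X_{\tau(s)}$ and may cross several $Y_{\tau(s),k}$'s, is what makes this step delicate.
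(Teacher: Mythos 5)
Your construction of $\sS^{(1)\prime}$ by translating each $X_s^{(1)}$ down by $h^{-j_s}$ to the base of its tower column (your $\widetilde{X}_s^{(1)} = h^{-j_s}(X_s^{(1)}) \subset Y_{\tau(s),\kappa(s)}$, with the whole tower of $\sS^{(1)}$ over it translated along) is exactly the right move and matches the paper. You are also right that the heart of the matter is arranging for each $\widetilde{X}_s^{(1)}$ to appear verbatim as a $Y'$-piece of $\sS'$. The gap is in how you try to achieve this: you feed a cleverly chosen $\sP''$ into Proposition \ref{propReturnTimeMapsGiveFinerPartitions} and hope the output leaves $\widetilde{X}_s^{(1)}$ intact. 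But the proposition is a black box: its conclusion only says $\sP_1(\sS')$ refines $\sP''$, and its proof merely asserts the existence of \emph{some} admissible partition $\sP_{t,k}$ of $Y_{t,k}$; nothing rules out that this internal partition splits $\widetilde{X}_s^{(1)}$ into several pieces, even when $\sP''$ is chosen so that $\widetilde{X}_s^{(1)}$ is an atom of the relevant Boolean algebra. So even after the strengthening of $\sP''$ you sketch, you still cannot \emph{choose} $X_s^{(1)\prime} = \widetilde{X}_s^{(1)}$ with the proposition as stated; you would have to reopen its proof and prove a strengthened variant with explicit control over $\sP_{t,k}$, and that is not carried out.

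The paper sidesteps the issue entirely by \emph{not} invoking Proposition \ref{propReturnTimeMapsGiveFinerPartitions} to build $\sS'$. Instead it constructs $\sS'$ directly from $\sS$ and the $\widetilde{X}_s^{(1)}$: for each $Y_{t,k}$ that properly contains some $\widetilde{X}_s^{(1)}$, that set is declared a new $Y'$-piece with $J' = J_{t,k}$, the remainder $Y_{t,k}\setminus \widetilde{X}_s^{(1)}$ gets a fresh index with the same return time, and all other $Y_{t,k}$ are left unchanged; conclusion (\ref{lemmaReturnTimeMapsNewBasesFromOldPartition(b)}) then follows from Lemmas \ref{lemmaConditionForFinerPartitions} and \ref{lemmaPartition1FinerIffPartition2Finer}, which you could use the same way. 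Your version also does not address what happens when two distinct $s$ land in the same $Y_{t,k}$, where one has to carve out each $\widetilde{X}_s^{(1)}$ in turn. If you replace the appeal to Proposition \ref{propReturnTimeMapsGiveFinerPartitions} with this explicit splitting (no change to the $J$'s is needed, so condition \ref{defnSystemOfFiniteReturnTimeMaps}(\ref{defnSystemOfFiniteReturnTimeMaps(f)}) is automatic), the rest of your argument goes through and the ``delicate bookkeeping'' at the top of the tower you worried about disappears.
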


\begin{proof}
Since $ \sS^{ ( 1 ) } $ is subordinate to $ \sP_1 ( \sS )$, for each $ s \in \{ 1 , \ldots , T^{ ( 1 ) } \} $, there is some $ t_s \in \{ 1 , \ldots , T \} $, some $ k_s \in \{ 1 , \ldots , K_{ t_s } \} $, and some $ j_s \in \{ 0 , \ldots , J_{ t_s , k_s } - 1 \} $ such that $ X_s^{ ( 1 ) } \subset h^{ j_s } ( Y_{ t_s , k_s } ) $. Set $ T^{ ( 1 ) \prime } = T^{ ( 1 ) } $, set $ X_s^{ ( 1 ) \prime } = h^{ -j_s } ( X_s^{ ( 1 ) } ) $ and set $ K_s^{ ( 1 ) \prime } = K_s^{ ( 1 ) } $ for all $ s \in \{ 1 , \ldots , T^{ ( 1 ) \prime } \} $, and set $ Y_{ s , k }^{ ( 1 ) \prime } = h^{ -j_s } ( Y_{ s , k }^{ ( 1 ) } ) $ and set $ J_{ s , k }^{ ( 1 ) \prime } = J_{ s , k }^{ ( 1 ) } $ for all $ s \in \{ 1 , \ldots , T^{ ( 1 ) \prime } \} $ and $ k \in \{ 1 , \ldots , K^{ ( 1 ) \prime }_t \} $. 

We now check that $ \systemarg{(1)\prime} $ is a system of finite first return time maps subordinate to $ \sP_1 ( \sS ) $ by checking each of the conditions of Definition \ref{defnSystemOfFiniteReturnTimeMaps}. Conditions (\ref{defnSystemOfFiniteReturnTimeMaps(a)}) and (\ref{defnSystemOfFiniteReturnTimeMaps(c)}) are clearly met. By construction, for each $ s \in \{ 1 , \ldots , T^{ ( 1 ) \prime } \}$, we have $ X_s^{ ( 1 ) \prime } \subset Y_{ t_s , k_s } \in \sP_1 ( \sS ) $. Thus, condition (\ref{defnSystemOfFiniteReturnTimeMaps(b)}) is met. Since for all $ s \in \{ 1 , \ldots , T^{ ( 1 ) } \} $ we have $\bigsqcup_{ k = 1 }^{ K_s } Y_{ s , k }^{ ( 1 ) } = X_s^{ ( 1 ) } $, for all $ s \in \{ 1 , \ldots , T^{ ( 1 ) \prime } \} $ we have 
\begin{align*}
\bigsqcup_{ k = 1 }^{ K_s^{ ( 1 ) \prime } } Y_{ s , k }^{ ( 1 ) \prime } &= \bigsqcup_{ k = 1 }^{ K_s^{ ( 1 ) } } h^{ - j_s } ( Y_{ t , k }^{ ( 1 ) } ) \\
&= h^{ - j_s } ( X_s^{ ( 1 ) } ) \\
&= X_s^{ ( 1 ) \prime } .
\end{align*}
Thus, condition (\ref{defnSystemOfFiniteReturnTimeMaps(d)}) is satisfied. For condition (\ref{defnSystemOfFiniteReturnTimeMaps(e)}), clearly $ J_{ s , k } \in \bZ_{ > 0 }$ for all $ s \in \{ 1 , \ldots , T^{ ( 1 ) \prime } \} $ and all $ k \in \{ 1 , \ldots , K_s^{ ( 1 ) \prime } \} $. For each $ s \in \{ 1 , \ldots , T^{ ( 1 ) \prime } \} $, we also have 
\begin{align*}
\bigsqcup_{ k = 1 }^{ K_s^{ ( 1 ) \prime } } h^{ J_{ s , k }^{ ( 1 ) \prime } } ( Y_{ s , k }^{ ( 1 ) \prime } ) &= \bigsqcup_{ k = 1 }^{ K_s^{ ( 1 ) } } h^{ J_{ s , k }^{ ( 1 ) } - j_s } ( Y_{ s , k }^{ ( 1 ) } ) \\
&= h^{ -j_s } \left( \bigsqcup_{ k = 1 }^{ K_s^{ ( 1 ) } } h^{ J_{ s , k }^{ ( 1 ) \prime } } ( Y_{ s , k }^{ ( 1 ) } ) \right) \\
&= h^{ -j_s } ( X_s^{ ( 1 ) } ) \\
&= X_s^{ ( 1 ) \prime }.
\end{align*}
Thus, condition (\ref{defnSystemOfFiniteReturnTimeMaps(e)}) holds. Finally, let $ x \in X $. Since $ \sS^{ ( 1 ) } $ is a system of finite first return time maps subordinate to $ \sP_1 ( \sS ) $, there is precisely one $ s \in \{ 1 , \ldots , T^{ ( 1 ) } \} $, one $ k \in \{ 1 , \ldots , K_s^{ ( 1 ) } \} $, and one $ j \in  \{ 0 , \ldots , J_{ s , k }^{ ( 1 ) } - 1 \} $ such that $ x \in h^{ j - j_s } ( Y_{ s , k }^{ ( 1 ) } ) = h^j ( Y_{ s , k }^{ ( 1 ) \prime } ) $. This is all that was needed to show that $ \sP_1 ( \sS^{ ( 1 ) \prime } )$ is a partition of $ X $. Thus, condition (\ref{defnSystemOfFiniteReturnTimeMaps(f)}) is satisfied, proving that $ \sS^{ ( 1 ) \prime } $ is a system of finite first return time maps subordinate to $ \sP_1 ( \sS ) $.

Now, for each $ t \in \{ 1 , \ldots , T \} $, let $ A_t = \{ a ( t , 1 ) , \ldots , a ( t , M_t ) \} $ denote the set of all $ s \in \{ 1 , \ldots , T^{ ( 1 ) \prime } \} $ such that $ t_s = t $ and $ X_s^{ ( 1 ) \prime } \neq Y_{ t_s , k_s } $, and let $ B_t = \{ b ( t , 1 ) , \ldots , b ( t , N_t ) \} $ be the set of all $ k \in \{ 1 , \ldots , K_t \} $ such that $ \left( \bigsqcup_{ s = 1 }^{ T^{ ( 1 ) \prime } } X_s^{ ( 1 ) \prime } \right) \cap Y_{ t , k } = \varnothing $ or such that $ X_s^{ ( 1 ) \prime } = Y_{ t_s , k_s } $. Set $ T' = T $, $ X_t' = X_t $ and $ K_t' = K_t + M_t $ for all $ t \in \{ 1 , \ldots , T' \} $, and 
\[
Y_{ t , k }' = 
\begin{cases} 
X_s^{ ( 1 ) \prime } & \text{if } s \in A_t \text{ and } k = k_s , \\ 
Y_{ t , k_s } \setminus X_s^{ ( 1 ) \prime } & \text{if } k = K_t + m \text{ for some } m \in \{ 1 , \ldots , M_t \} , \text{ and } s = a ( t , m ) , \\ 
Y_{ t , k } & \text{otherwise} 
\end{cases}
\]
and 
\[
J_{ t , k }' = 
\begin{cases} 
J_{ t , k_s } & \text{if } k = K_t + m \text{ for some } m \in \{ 1 , \ldots , M_t \} , \text{ and } s = a ( t , m ) , \\ 
J_{ t , k } & \text{otherwise} 
\end{cases}
\]
for all $ t \in \{ 1 , \ldots , T' \} $ and $ k \in \{ 1 , \ldots , K_t' \} $.
We now check that $\systemarg{\prime}$ is a system of finite first return time maps subordinate to $\sP$ by checking the conditions of Definition \ref{defnSystemOfFiniteReturnTimeMaps}. Conditions (\ref{defnSystemOfFiniteReturnTimeMaps(a)}), (\ref{defnSystemOfFiniteReturnTimeMaps(b)}), and (\ref{defnSystemOfFiniteReturnTimeMaps(c)}) are clearly met. For each $ t \in \{ 1 , \ldots , T' \} $, we have the following, where we shorten $ k_{ a ( t , m ) } $ to $ k ( t , m ) $:
\begin{align*}
\bigsqcup_{ k = 1 }^{ K_t' } Y_{ t , k }' &= \left( \bigsqcup_{ m = 1 }^{ M_t } X_{ a ( t , m ) }^{ ( 1 ) \prime } \right ) \sqcup \left( \bigsqcup_{ m = 1 }^{ M_t } Y_{ t , k ( t , m ) } \setminus X_{ a ( t , m ) }^{ ( 1 ) \prime } \right)  \sqcup \left( \bigsqcup_{ n = 1 }^{ N_t } Y_{ t , b ( t , n ) } \right) \\
&= \left( \bigsqcup_{ m = 1 }^{ M_t } Y_{ t , k ( t , m ) } \right) \sqcup\left( \bigsqcup_{ n = 1 }^{ N_t } Y_{ t , b ( t , n ) } \right) \\
&= X_t \\
&= X_t ' .
\end{align*}
Thus, condition (\ref{defnSystemOfFiniteReturnTimeMaps(d)}) is met. Similarly, for each $ t \in \{ 1 , \ldots , T' \} $, we have the following, where we again shorten $ k_{ a ( t , m ) } $ to $ k ( t , m ) $:
\begin{align*}
\bigsqcup_{ k = 1 }^{ K_t' } h^{ J_{ t , k }' } ( Y_{ t , k }' ) &= \left( \bigsqcup_{ m = 1 }^{ M_t } h^{ J_{ t , k ( t , m ) }  } ( X_{ a ( t , m ) }^{ ( 1 ) \prime } ) \right ) \sqcup \left( \bigsqcup_{ m = 1 }^{ M_t } h^{ J_{ t , k ( t , m ) } } ( Y_{ t , k ( t , m ) } \setminus X_{ a ( t , m ) }^{ ( 1 ) \prime } ) \right) \\
& \hspace{0.5cm} \sqcup \left( \bigsqcup_{ n = 1 }^{ N_t } h^{ J_{ t , b ( t , n ) } } ( Y_{ t , b ( t , n ) } ) \right) \\
&= \left( \bigsqcup_{ m = 1 }^{ M_t } h^{ J_{ t , k ( t , m ) } } ( Y_{ t , k ( t , m ) } ) \right) \sqcup\left( \bigsqcup_{ n = 1 }^{ N_t } h^{ J_{ t , b ( t , n ) } } ( Y_{ t , b ( t , n ) } ) \right) \\
&= X_t \\
&= X_t' .
\end{align*}
Thus, condition (\ref{defnSystemOfFiniteReturnTimeMaps(e)}) is met. Finally, for each $ x \in X $, there are precisely one $ t \in \{ 1 , \ldots , T \} $, one $ k \in \{ 1 , \ldots , K_t \}$, and one $ j \in \{ 0 , \ldots , J_{ t , k } - 1 \} $ such that $ x \in h^j ( Y_{ t , k } )$. If $ Y_{ t , k } = Y_{ t , k }' $, then $ J_{ t , k } = J_{ t , k }' $, and so $ x \in h^j ( Y_{ t , k }' ) $ for precisely one $ t \in \{ 1 , \ldots , T' \} $, one $ k \in \{ 1 , \ldots , K_t' \}$ and one $ j \in \{ 0 , \ldots , J_{ t , k }' - 1 \} $. Otherwise, there is some $ s \in A_t $ such that $ X_s^{ ( 1 ) \prime } \subset Y_{ t , k }$. There are now two possible cases. First, if $ x \in h^j ( X_s^{ ( 1 ) \prime } ) $, then $ J_{ t , k } = J_{ t , k }' $, and so $ x \in h^j ( Y_{ t , k }' ) $ for precisely one $ t \in \{ 1 , \ldots , T' \} $, one $ k \in \{ 1 , \ldots , K_t' \}$ and one $ j \in \{ 0 , \ldots , J_{ t , k }' - 1 \} $. Otherwise, if $ x \in h^j ( Y_{ t , k } \setminus X_s^{ ( 1 ) \prime } ) $, then $ x \in h^j ( Y_{ t , K_t + m }' ) $ where $ m $ is such that $ s = a ( t , m ) $. In this case, we also have $ J_{ t , k } = J_{ t , K_t + m }' $, and so $ x \in h^j ( Y_{ t , k' }' ) $ for precisely one $ t \in \{ 1 , \ldots , T' \} $, one $ k' \in \{ 1 , \ldots , K_t' \}$ and one $ j \in \{ 0 , \ldots , J_{ t , k' }' - 1 \} $ (specifically, $ k'  = K_t + m $) . Thus, condition (\ref{defnSystemOfFiniteReturnTimeMaps(f)}) holds, and so $ \sS' $ is indeed a system of finite first return time maps subordinate to $ \sP $.

We now check that the conclusions of the lemma are satisfied. Clearly conclusion (\ref{lemmaReturnTimeMapsNewBasesFromOldPartition(a)}) is satisfied. Conclusion (\ref{lemmaReturnTimeMapsNewBasesFromOldPartition(b)}) is satisfied by Lemma \ref{lemmaConditionForFinerPartitions} and Lemma \ref{lemmaPartition1FinerIffPartition2Finer}. Finally, conclusion (\ref{lemmaReturnTimeMapsNewBasesFromOldPartition(c)}) is also clearly met by the way we defined the elements of $ \sS' $.
\end{proof}

\begin{lemma}\label{lemmaNewBasesUnderOldBases}
Let $(X,h)$ be a zero-dimensional system such that, for any partition $\sR$ of $X$, $(X,h)$ admits a system of finite first return time maps subordinate to $\sR$. Let $\sP$ and $\sP'$ be partitions of $X$, and let $ \systembasic $ be a system of finite first return time maps subordinate to $\sP$. Then there is a system $ \systemarg{\prime} $ of finite first return time maps subordinate to $\sP'$ such that, for each $t' \in \{1,\ldots,T'\}$, there is a $t \in \{1,\ldots,T\}$ such that $X_{t'}' \subset X_t$.
\end{lemma}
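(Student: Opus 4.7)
The plan is to combine the hypothesis (which gives us a system subordinate to any partition we like) with the translation trick of Lemma \ref{lemmaReturnTimeMapsNewBasesFromOldPartition}, which is what lets us push bases down to lie inside bases of a previously chosen system. The delicate point is that the translation by $h^{-j_s}$ used in Lemma \ref{lemmaReturnTimeMapsNewBasesFromOldPartition} can destroy subordination to $\sP'$, so we must first arrange that every element of $\sP_1(\sS)$ (including the bases and their iterates) is already small relative to $\sP'$.

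First I would let $\sP''$ be a common refinement of $\sP'$ and $\sP_1(\sS)$, and then apply Proposition \ref{propReturnTimeMapsGiveFinerPartitions} to $\sS$ and $\sP''$ to obtain a system $\systemarg{*}$ subordinate to $\sP$ with $X_t^{*} = X_t$ for all $t$ and with $\sP_1(\sS^{*})$ finer than $\sP''$, hence finer than both $\sP_1(\sS)$ and $\sP'$. Next I invoke the hypothesis of the lemma to obtain a system $\systemarg{(1)}$ of finite first return time maps subordinate to the partition $\sP_1(\sS^{*})$.

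Now apply Lemma \ref{lemmaReturnTimeMapsNewBasesFromOldPartition} with $\sS^{*}$ playing the role of $\sS$ and with $\sS^{(1)}$ as the second input. This produces refinements $\systemarg{*\prime}$ subordinate to $\sP$ and $\systemarg{(1)\prime}$ subordinate to $\sP_1(\sS^{*})$ such that $X_{t}^{*\prime} = X_{t}^{*} = X_t$ for all $t$ and such that, for every $s \in \{1,\ldots,T^{(1)\prime}\}$, we have $X_{s}^{(1)\prime} = Y_{t_s,k_s}^{*\prime}$ for some $t_s \in \{1,\ldots,T\}$ and $k_s \in \{1,\ldots,K_{t_s}^{*\prime}\}$. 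I will then take $\sS' := \sS^{(1)\prime}$ and declare this to be the desired system.

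It remains to check the two things required of $\sS'$. For the containment conclusion, $X_{s}^{(1)\prime} = Y_{t_s,k_s}^{*\prime} \subset X_{t_s}^{*\prime} = X_{t_s}$, so every base of $\sS'$ is contained in a base of $\sS$. For subordination to $\sP'$, each base $X_{s}^{(1)\prime}$ is contained in an element of $\sP_1(\sS^{*})$ by construction of $\sS^{(1)\prime}$, and each element of $\sP_1(\sS^{*})$ is contained in an element of $\sP'$ since $\sP_1(\sS^{*})$ refines $\sP'$; so $\sS'$ is subordinate to $\sP'$. The main obstacle is essentially bookkeeping: one has to arrange the sizes of partitions in the right order (refining $\sP_1(\sS)$ relative to $\sP'$ \emph{before} applying Lemma \ref{lemmaReturnTimeMapsNewBasesFromOldPartition}), because the translation step in that lemma respects neither $\sP$ nor $\sP'$, but it does preserve containment in whatever partition the translated bases already live in.
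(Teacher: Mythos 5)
Your proof is correct and follows essentially the same route as the paper: first refine $\sS$ (via Proposition \ref{propReturnTimeMapsGiveFinerPartitions}) so that $\sP_1$ of the refined system is finer than $\sP'$ while keeping the same bases, then feed this together with a system from the hypothesis into Lemma \ref{lemmaReturnTimeMapsNewBasesFromOldPartition} and take the resulting $\sS^{(1)\prime}$ as $\sS'$. The only superficial difference is that you refine against a common refinement $\sP''$ of $\sP'$ and $\sP_1(\sS)$, whereas the paper refines against $\sP'$ alone; the extra fineness relative to $\sP_1(\sS)$ is harmless but not needed, since the conclusion asks only for subordination to $\sP'$ and containment of bases, both of which already follow without it.
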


\begin{proof}
By Proposition \ref{propReturnTimeMapsGiveFinerPartitions}, there is a system 
\[
\systemarg{(0)}
\]
of finite first return time maps subordinate to $ \sP $ such that $ T^{ ( 0 ) } = T $, $ \sP_1( \sS^{ ( 0 ) } ) $ is finer than $ \sP' $, and for all $ t \in \{ 1 , \ldots , T \} $, $ X_t^{ ( 0 ) } = X_t $.

Now, by Lemma \ref{lemmaReturnTimeMapsNewBasesFromOldPartition}, there is some system
\[
\systemarg{\prime}
\]
of finite first return time maps subordinate to $ \sP_1( \sS^{ ( 0 ) } ) $ such that for all $ t' \in \{ 1 , \ldots , T' \} $, there is some $t \in \{1,\ldots,T\}$ such that $ X_{ t' }' \subset X_t $. Since $ \sS' $ is subordinate to $ \sP_1( \sS^{ ( 0 ) } ) $ and since $ \sP_1 ( \sS^{ ( 0 ) } ) $ is finer than $ \sP' $, the conclusion follows.
\end{proof}

\begin{lemma}\label{lemmaFinerGeneratingSequences}
Let $ ( X , h ) $ be a zero-dimensional system and let $ ( \sP_n ) $ be a generating sequence of partitions of $ X $ (Definition \ref{defnGeneratingSequenceOfPartitions}). Let $ ( \sP_n' ) $ be a sequence of partitions such that, for every $ n \in \bZ_{ > 0 } $, $ \sP_{ n + 1 }' $ is finer than $ \sP_n' $, and for every $ n \in \bZ_{ > 0 } $, there is some $ m_n \in \bZ_{ > 0 } $ such that $ \sP_{ m_n }' $ is finer than $ \sP_n $. Then $ ( \sP_n' ) $ is a generating sequence of partitions of $ X $.
\end{lemma}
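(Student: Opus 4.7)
The plan is to verify directly the two conditions of Definition \ref{defnGeneratingSequenceOfPartitions} for the sequence $(\sP_n')$. The first condition (that $\sP_{n+1}'$ is finer than $\sP_n'$) is part of the hypothesis, so nothing needs to be done there. All the work is in establishing the intersection property at each point.

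To do so, fix $x \in X$, and for each $n \in \bZ_{>0}$ let $W_n$ denote the unique element of $\sP_n'$ containing $x$ (existence and uniqueness follow from $\sP_n'$ being a partition). I would show that $\bigcap_{n=1}^\infty W_n = \{x\}$. The inclusion $\{x\} \subset \bigcap_n W_n$ is trivial by construction. For the reverse inclusion, I would use the generating property of $(\sP_n)$: there is a sequence $(V_n)$ with $V_n \in \sP_n$ and $\bigcap_n V_n = \{x\}$, and by uniqueness $V_n$ is the element of $\sP_n$ containing $x$.

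The key observation is that, for each $k$, since $\sP_{m_k}'$ is finer than $\sP_k$ by hypothesis, $W_{m_k}$ is contained in some element of $\sP_k$; as $x \in W_{m_k}$, that element must be $V_k$, and hence $W_{m_k} \subset V_k$. Now if $y \in \bigcap_n W_n$, then in particular $y \in W_{m_k} \subset V_k$ for every $k$, so $y \in \bigcap_k V_k = \{x\}$, forcing $y = x$. Combining the two inclusions gives $\bigcap_n W_n = \{x\}$, which completes the verification.

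This argument is essentially bookkeeping with nested partitions, so I do not expect any real obstacle; the only subtlety is recognizing that the passage through the subsequence indexed by $(m_n)$ is harmless because $(W_n)$ is a decreasing sequence (by the refinement condition on $(\sP_n')$), so pinning $x$ down along the subsequence $(W_{m_k})$ automatically pins it down along the full sequence.
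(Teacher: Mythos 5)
Your proof is correct and takes essentially the same approach as the paper: both identify the (unique) element of $\sP_n'$ containing $x$, pass through the subsequence $(m_k)$ where $\sP_{m_k}'$ refines $\sP_k$ to get $W_{m_k} \subset V_k$, and conclude that $\bigcap_n W_n \subset \bigcap_k V_k = \{x\}$. Your presentation is a bit cleaner in that it makes the uniqueness of $W_n$ explicit up front rather than phrasing the argument as an inductive construction of the sequence, but the substance is identical.
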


\begin{proof}
Let $ x \in X $ and let $ ( V_n ) $ be a sequence such that $ V_n \in \sP_n $ for all $ n \in \bZ_{ > 0 } $ and $ \bigcap_{ n = 1 }^\infty V_n = \{ x \} $. We inductively construct a sequence $ ( U_m ) $ such that $ U_m \in \sP_m' $ for all $ m \in \bZ_{ > 0 } $ and $ \bigcap_{ m = 1 }^\infty U_m = \{ x \} $. First, by assumption, there is an $ m_1 \in \bZ_{ > 0 } $ such that $ \sP_{ m_1 }' $ is finer than $ \sP_1 $. We can therefore choose $ U_1 , \ldots , U_{ m_1 } $ such that $ U_1 \supset \cdots \supset U_{ m_1 } $, $ U_{ m_1 } \subset V_1 $, and $ x \in U_m \in \sP_m $ for all $ m \in \{ 1 , \ldots , m_1 \} $. Next, there is an $ m_2 \in \bZ_{ > 0 } $ such that $ \sP_{ m_2 }' $ is finer than $ \sP_2 $. Since $ \sP_{ m + 1 }' $ is finer than $ \sP_m' $ for all $ m \in \bZ_{ > 0 } $, we are free to assume that $ m_2 > m_1 $. We can therefore choose $ U_{ m_1 + 1 } , \ldots , U_{ m_2 } $ such that $ U_{ m_1 + 1 } \supset \cdots \supset U_{ m_2 } $, $ U_{ m_2 } \subset V_2 $, and $ x \in U_m \in \sP_m $ for all $ m \in \{ m_1 + 1 , \ldots , m_2 \} $. Repeating this process yields $ ( U_m ) $, proving the lemma.
\end{proof}

The following construction is a key ingredient in the proof of Theorem \ref{thmMainTheorem}. It takes a zero-dimensional zero-dimensional system such that, for any partition $ \sP $ of $ X $, $ ( X , h ) $ admits a system of finite first return time maps subordinate to $ \sP $, and constructs a closed set $ Z $ and an equivalence relation $ \sim $ whose quotient map will be $ \psi $ in Definition \ref{defnFiberwiseEssentiallyMinimal}.

\begin{const}\label{constSequenceOfPartitions}
Let $ ( X , h ) $ be a zero-dimensional system such that, for any partition $ \sP $ of $ X $, $ ( X , h ) $ admits a system of finite first return time maps subordinate to $ \sP $. Let $ ( \sP^{ ( n ) } ) $ be a generating sequence of partitions of $ X $. Using Proposition \ref{propReturnTimeMapsGiveFinerPartitions}, we choose a system 
\[
\systemarg{ ( 1 ) } 
\]
of finite first return time maps subordinate to $ \sP^{ ( 1 ) } $ such that $ \sP_1 ( \sS^{ ( 1 ) } ) $  is finer than $ \sP^{ ( 2 ) } $.

We construct a sequence of systems of finite first return time maps inductively. Let $ n $ be an integer such that $ n \geq 2 $ and use Lemma \ref{lemmaNewBasesUnderOldBases} with $ \sS^{ ( n - 1 ) } $ in place of $ \sS $,  $ \sP^{ ( n - 1 ) } $ in place of $ \sP $, and $ \sP^{ ( n ) } $ in place of $ \sP' $ to get a system $ \sS^{ ( n ) \prime } $ of finite first return time maps subordinate to $ \sP^{ ( n ) } $ such that, for every $ t' \in \{ 1 , \ldots , T^{ ( n ) \prime } \} $, there is a $ t \in \{ 1 , \ldots , T^{ ( n - 1 ) } \} $ such that $ X_{ t' }^{ ( n ) \prime } \subset X_t^{ ( n ) } $. Then apply Proposition \ref{propReturnTimeMapsGiveFinerPartitions} with $ \sP^{ ( n ) } $ in place of both $ \sP $ and $ \sP' $ and with  $ \sS^{ ( n ) \prime } $ in place of $ \sS $ to get a system $ \systemarg{ ( n ) } $ such that $T^{ ( n ) } = T^{ ( n ) \prime }$, $ X_t^{ ( n ) } = X_t^{ ( n ) \prime } $ for all $ t \in \{ 1 , \ldots , T^{ ( n ) } \} $, and $ \sP_1( \sS^{ ( n ) } ) $ is finer than $ \sP^{ ( n ) } $. By Lemma \ref{lemmaFinerGeneratingSequences}, $ ( \sP_1 ( \sS^{ ( n ) } ) ) $ is a generating sequence of partitions, since, for all $ n \in \bZ_{ > 0 } $, $ \sP_1 ( \sS^{ ( n ) } ) $ is finer than $ \sP^{ ( n + 1 ) } $.

Let $ x_1 , x_2 \in X $. We say that $ x_1 \sim x_2 $ if and only if there exists a sequence $ ( t_n ) $ where $ t_n \in \{ 1 , \ldots , T^{ ( n ) } \} $ for all $ n \in \bZ_{ > 0 } $ such that $ x_1 , x_2 \in \bigcap_{ n = 1 }^\infty \bigcup_{ j \in \bZ } h^j ( X_{ t_n }^{ ( n ) } ) $. Define a set $ Z \subset X $ by $ Z = \bigcap_{ n = 1 }^\infty \bigsqcup_{ t = 1 }^{ T^{ ( n ) } } X_t^{ ( n ) } $. 
\end{const}

\begin{rmk}
Adopt the notation of Construction \ref{constSequenceOfPartitions}. We remark that, for most choices of $ ( t_n ) $, the set $ \bigcap_{ n = 1 }^\infty \bigcup_{ j \in \bZ } h^j ( X_{ t_n }^{ ( n ) } ) $ will be empty. In fact, it is nonempty if and only if, for every positive integer $ n $ with $ n \geq 2 $, we have $ X_{ t_n }^{ ( n ) } \subset X_{ t_{ n - 1 } }^{ ( n - 1 ) } $; if $ X_{ t_n }^{ ( n ) } \not\subset X_{ t_{ n - 1 } }^{ ( n - 1 ) } $, then by construction, we have $ X_{ t_n }^{ ( n ) } \cap X_{ t_{ n - 1 } }^{ ( n - 1 ) } = \varnothing $. Another thing to notice is that since for every $ n \in \bZ_{ > 0 } $ the sets $ X_1^{ ( n ) } , \ldots , X_{ T^{ ( n ) } }^{ ( n ) } $ are pairwise disjoint, the sequence $ ( t_n ) $ corresponding to an equivalence class is unique. Finally, we can see that $ z $ is in $ Z $ if and only if there is a sequence $ ( t_n ) $ such that $ z \in \bigcap_n X^{ ( n ) }_{ t_n } $.
\end{rmk}

\begin{lemma}\label{lemmaSimIsAnEquivalenceRelation}
The relation $ \sim $ from Construction \ref{constSequenceOfPartitions} is an equivalence relation.
\end{lemma}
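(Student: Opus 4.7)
The plan is to verify the three axioms (reflexivity, symmetry, transitivity) directly from the definition, using as the key tool Proposition \ref{propPropertiesOfX_t}(\ref{propPropertiesOfX_t(b)}), which tells us that for every $n \in \bZ_{>0}$, the sets $\bigcup_{j \in \bZ} h^j(X_t^{(n)})$ with $t \in \{1, \ldots, T^{(n)}\}$ form a partition of $X$. This partition property will be the workhorse of the proof; it makes the witness sequence $(t_n)$ in the definition of $\sim$ unique once a point is fixed.

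For reflexivity, I would fix $x \in X$ and apply Proposition \ref{propPropertiesOfX_t}(\ref{propPropertiesOfX_t(b)}) at each level $n$ to produce a unique $t_n \in \{1, \ldots, T^{(n)}\}$ with $x \in \bigcup_{j \in \bZ} h^j(X_{t_n}^{(n)})$; the resulting sequence $(t_n)$ witnesses $x \sim x$. Symmetry is immediate from the definition, since the condition on $(t_n)$ is symmetric in $x_1$ and $x_2$.

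Transitivity is where the disjointness in Proposition \ref{propPropertiesOfX_t}(\ref{propPropertiesOfX_t(b)}) plays its essential role, though even here no real obstacle arises. Suppose $x_1 \sim x_2$ via a sequence $(t_n)$ and $x_2 \sim x_3$ via a sequence $(s_n)$, so that
\[
x_1, x_2 \in \bigcap_{n=1}^\infty \bigcup_{j \in \bZ} h^j(X_{t_n}^{(n)}) \quad \text{and} \quad x_2, x_3 \in \bigcap_{n=1}^\infty \bigcup_{j \in \bZ} h^j(X_{s_n}^{(n)}).
\]
For each $n$, the point $x_2$ lies in both $\bigcup_{j \in \bZ} h^j(X_{t_n}^{(n)})$ and $\bigcup_{j \in \bZ} h^j(X_{s_n}^{(n)})$, but by Proposition \ref{propPropertiesOfX_t}(\ref{propPropertiesOfX_t(b)}) these two sets are either equal or disjoint, forcing $t_n = s_n$. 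Consequently, the single sequence $(t_n)$ witnesses $x_1, x_3 \in \bigcap_{n=1}^\infty \bigcup_{j \in \bZ} h^j(X_{t_n}^{(n)})$, giving $x_1 \sim x_3$.

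There is no real obstacle here: the only nontrivial step is recognizing that the partition property of Proposition \ref{propPropertiesOfX_t}(\ref{propPropertiesOfX_t(b)}) forces the agreement of the two witness sequences through their common point $x_2$, which is exactly the observation already implicit in the remark immediately preceding the lemma (uniqueness of $(t_n)$ per equivalence class). The proof should be short — essentially three sentences.
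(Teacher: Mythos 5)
Your proof is correct and takes essentially the same approach as the paper: both rest on Proposition \ref{propPropertiesOfX_t}(\ref{propPropertiesOfX_t(b)}). The paper's own proof is terser—it explicitly verifies only reflexivity (``all elements have an equivalence class'') and declares the rest obvious—whereas you also spell out that the disjointness in Proposition \ref{propPropertiesOfX_t}(\ref{propPropertiesOfX_t(b)}) forces the two witness sequences through $x_2$ to coincide, which is the genuinely nontrivial content of transitivity; that is a worthwhile addition, not a different argument.
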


\begin{proof}
The only thing that is nonobvious about whether or not this is an equivalence relation is whether or not all elements of $ X $ have an equivalence class. But by Proposition \ref{propPropertiesOfX_t}(\ref{propPropertiesOfX_t(b)}), for every $ n \in \bZ_{ > 0 } $, there is some $ t \in \{ 1 , \ldots , T^{ ( n ) } \} $ such that $ x \in \bigcup_{ j \in \bZ } h^j ( X_t^{ ( n ) } ) $. Thus, $ \sim $ is indeed an equivalence relation on $ X $.
\end{proof} 

\begin{lemma}\label{lemmaZIsClosed}
The set $ Z $ in Construction \ref{constSequenceOfPartitions} is a closed subset of $ X $ that contains exactly one element from each equivalence class of $ \sim $.
\end{lemma}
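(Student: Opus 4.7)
The plan is to verify three things: (i) $ Z $ is closed; (ii) every equivalence class of $ \sim $ meets $ Z $; and (iii) no equivalence class meets $ Z $ in more than one point. Part (i) is immediate: each $ \bigsqcup_{ t = 1 }^{ T^{ ( n ) } } X_t^{ ( n ) } $ is a finite disjoint union of compact open sets and hence closed, so $ Z $ is an intersection of closed sets.

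For (ii), fix $ x \in X $. By Proposition \ref{propPropertiesOfX_t}(\ref{propPropertiesOfX_t(b)}), for each $ n $ there is a unique $ t_n \in \{ 1 , \ldots , T^{ ( n ) } \} $ with $ x \in \bigcup_{ j \in \bZ } h^j ( X_{ t_n }^{ ( n ) } ) $, and this sequence $ ( t_n ) $ is precisely the one associated with the equivalence class of $ x $. Since $ \bigcap_{ n = 1 }^\infty \bigcup_{ j \in \bZ } h^j ( X_{ t_n }^{ ( n ) } ) $ contains $ x $ and is therefore nonempty, the remark following Construction \ref{constSequenceOfPartitions} forces $ X_{ t_n }^{ ( n ) } \subset X_{ t_{ n - 1 } }^{ ( n - 1 ) } $ for all $ n \geq 2 $. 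Thus $ ( X_{ t_n }^{ ( n ) } ) $ is a decreasing sequence of nonempty compact sets, so by the finite intersection property its intersection is nonempty; any point of this intersection lies both in $ Z $ and in the equivalence class of $ x $.

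For (iii), suppose $ z_1 , z_2 \in Z $ are equivalent, witnessed by a sequence $ ( t_n ) $. For each $ n $ and each $ i \in \{ 1 , 2 \} $, membership $ z_i \in Z $ gives a unique $ s_{ i , n } $ with $ z_i \in X_{ s_{ i , n } }^{ ( n ) } $; since we also have $ z_i \in \bigcup_{ j \in \bZ } h^j ( X_{ t_n }^{ ( n ) } ) $, the disjointness in Proposition \ref{propPropertiesOfX_t}(\ref{propPropertiesOfX_t(b)}) forces $ s_{ i , n } = t_n $. Hence $ z_1 , z_2 \in \bigcap_n X_{ t_n }^{ ( n ) } $. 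Because $ \sS^{ ( n ) } $ is subordinate to $ \sP^{ ( n ) } $, for each $ n $ there is some $ V_n \in \sP^{ ( n ) } $ with $ X_{ t_n }^{ ( n ) } \subset V_n $; in particular, $ V_n $ is the unique element of $ \sP^{ ( n ) } $ containing $ z_1 $. Since $ ( \sP^{ ( n ) } ) $ is a generating sequence, it follows that $ \bigcap_n V_n = \{ z_1 \} $, forcing $ z_2 = z_1 $.

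The main obstacle will be (iii): cleanly linking the orbit-based disjointness (which pins down which base $ X_{ t_n }^{ ( n ) } $ each $ z_i $ must sit in) with the partition-based generating property (which shrinks neighborhoods of $ z_1 $ to a point). Once that translation is set up via the subordination of $ \sS^{ ( n ) } $ to $ \sP^{ ( n ) } $, everything else is elementary compactness and finite intersection reasoning.
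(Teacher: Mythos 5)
Your proof is correct and follows essentially the same argument as the paper: closedness is immediate, existence of a representative in $Z$ comes from compactness of the nested sequence $(X_{t_n}^{(n)})$, and uniqueness comes from pinning a second representative into the same nested sequence via the disjointness of the $\bigcup_j h^j(X_t^{(n)})$ and then shrinking to a point via the generating property. The only difference is organizational: the paper proves $\bigcap_n X_{t_n}^{(n)}$ is a singleton inside the existence step (citing that $(\sP^{(n)})$ generates the topology), whereas you defer the singleton argument to the uniqueness step and make the subordination to $\sP^{(n)}$ explicit; both are fine.
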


\begin{proof}
It is clear that $ Z $ is a closed subset of $ X $, as it is defined to be the intersection of closed subsets of $ X $.

We now show that $ Z $ contains precisely one element from each equivalence class. To see this, first let $ ( t_n ) $ be a sequence such that, for all $ n \in \bZ_{ > 0 } $, we have $ t_n \in \{ 1 , \ldots , T^{ ( n ) } \} $ and $ X_{ t_{ n + 1 } }^{ ( n + 1 ) } \subset X_{ t_n }^{ ( n ) } $. Then $ ( X_{ t_n }^{ ( n ) } ) $ is a decreasing sequence of nonempty compact open subsets of $ X $, and since the union of $ ( \sP^{ ( n ) } ) $ generates the topology of $ X $, $ \bigcap_{ n = 1 }^\infty X_{ t_n }^{ ( n ) } $ contains exactly one element, which is certainly in $ Z $. If $ x' \in X $ is another element in the same equivalence class as $ x $, then $ x' \in \bigcap_{ n = 1 }^\infty \bigcup_{ j \in \bZ } h^j ( X_{ t_n }^{ ( n ) } ) $. If we also have $ x' \in Z $, then $ x' \in \bigcap_{ n = 1 }^\infty X_{ t_n }^{ ( n ) } $, so $ x' = x $. Thus, $ Z $ indeed contains precisely one element from each equivalence class.
\end{proof}

\begin{proof}[Proof of Theorem \ref{thmFiberwiseIffAdmitsPartitions}]
($\Rightarrow$). Let $ Z $ and $ \psi $ be as in Definition \ref{defnFiberwiseEssentiallyMinimal}. Let $ \sP $ be a partition of $ X $. Let $ X_1' , \ldots , X_T' $ be the elements of $ \sP $ with nontrivial intersection with $ Z $. For each $ t \in \{ 1 , \ldots , T \} $, set $ X_t = \psi^{ -1 } ( X_t' \cap Z ) \cap X_t' $. Since $ X_1' , \ldots , X_T' $ are pairwise disjoint, it follows that $ X_1 , \ldots , X_T $ are also pairwise disjoint.

Fix $ t \in \{ 1 , \ldots , T \} $. Let $ \lambda_{ X_t } : X_t \to \bZ_{ > 0 } $ be as in Definition \ref{defnLambdaU} and set $ Z_t = X_t \cap Z $. Let $ z \in Z_t $ and set $ V_z = X_t \cap \psi^{ -1 } ( z ) $. Note that $ V_z $ is a compact open neighborhood of $ z $ in $ \psi^{ -1 } ( z ) $. Since $ h|_{ \psi^{ -1 } ( z ) } $ is essentially minimal and $ V_z $ contains an element from the minimal set of $ ( \psi^{ -1 } ( z ) , h|_{ \psi^{ -1 } ( z ) } ) $, $ \lambda_{ X_t }|_{ V_z } $ is a finite subset of $ \bZ_{ > 0 }$ by Proposition \ref{propEssentiallyMinimalReturnTimeMap}(\ref{propEssentiallyMinimalReturnTimeMap(b)}). Since this holds for all $ z \in Z_t $, we see that $\ran ( \lambda_{ X_t } )$ is a subset of $ \bZ_{ > 0 } $. Because of this, because $ X_t $ is a compact open subset of $X$, and because $ \lambda_{ X_t } $ is continuous by Proposition \ref{propLambdaUContinuous}, it follows from Proposition \ref{propEssentiallyMinimalReturnTimeMap}(\ref{propEssentiallyMinimalReturnTimeMap(b)}) that $ \ran( \lambda_{ X_t } ) $ is a finite set; thus, we can write $ \ran( \lambda_{ X_t } ) = \{ J_{ t , 1 } , \ldots , J_{ t , K_t } \} $. For each $ k \in \{ 1 , \ldots , K_t \} $, define $ Y_{ t , k } = \lambda_{ X_t }^{ -1 } ( J_{ t , K_t } ) $. 

We now check that what was defined above satisfies the conditions of Definition \ref{defnSystemOfFiniteReturnTimeMaps}. Conditions (\ref{defnSystemOfFiniteReturnTimeMaps(a)}) and (\ref{defnSystemOfFiniteReturnTimeMaps(c)}) are clearly met.  For each $ t \in \{ 1 , \ldots , T \} $, note that since $ X_t' \cap Z $ is compact and open in $ Z $ and since $ \psi $ is continuous, $ \psi^{ -1 } ( X_t' \cap Z ) $ is compact and open, and hence $ X_t $ is compact and open. Furthermore, since $ X_t \subset X_t' $, and $ X_t' $ is an element of $ \sP $, it follows that $ X_t $ is contained in an element of $ \sP $. Thus, condition (\ref{defnSystemOfFiniteReturnTimeMaps(b)}) holds. For each $ t \in \{ 1 , \ldots , T \} $, since $ \ran( \lambda_{ X_t } ) = \{ J_{ t , 1 } , \ldots , J_{ t , K_t } \} $, we also have 
\begin{align*}
\bigsqcup_{ k = 1 }^{ K_t } Y_{ t , k } &=\bigsqcup_{ k = 1 }^{ K_t } \lambda_{ X_t }^{ -1 } ( J_{ t , k } ) \\
& = X_t .
\end{align*}
Thus, condition (\ref{defnSystemOfFiniteReturnTimeMaps(d)}) holds. Recall that for each $ z \in Z $, $ ( \psi^{ -1 } ( z ) , h |_{ \psi^{ - 1 } ( z ) } ) $ is an essentially minimal system and $z$ is in its minimal set, and so since $ V_z $ is a compact open neighborhood of $z$, Proposition \ref{propEssentiallyMinimalReturnTimeMap}(\ref{propEssentiallyMinimalReturnTimeMap(c)}) tells us that $ \{ h^{ \lambda_{ V_z } } ( x )  \setdiv x \in V_z \} = V_z$. Thus, for each $ t \in \{ 1 , \ldots , T \} $, we have
\begin{align*}
\bigsqcup_{ k = 1 }^{ K_t } h^{ J_{ t , k } } ( Y_{ t , k } ) &= \{ h^{ \lambda_{ X_t } ( x ) } ( x ) \setdiv x \in X_t \} \\
&= \bigsqcup_{ z \in Z_t } \{ h^{ \lambda_{ V_z } ( x ) } ( x ) \setdiv x \in V_z \} .
\end{align*}
Since $\bigsqcup_{z \in Z_t}V_z=X_t$, this proves that (\ref{defnSystemOfFiniteReturnTimeMaps(e)}) indeed holds. Next, let $ x \in X $. There is precisely one $ z \in Z $ such that $ x \in \psi^{ -1 } ( z ) $, and precisely one $ t \in \{ 1 , \ldots , T \} $ such that $ z \in Z_t $. Let $ j  \in \bZ_{ \geq 0 } $ be the smallest nonnegative integer such that $ h^{ -j } ( x ) \in V_z \subset X_t $. This integer exists by applying Proposition \ref{propEssentiallyMinimalReturnTimeMap}(\ref{propEssentiallyMinimalReturnTimeMap(a)}), which applies since $ ( \psi^{ -1 } ( z ) , h|_{ \psi^{ -1 } ( z ) } ) $ is an essentially minimal system, $ z $ is in its minimal set, and $ V_z $ is a compact open neighborhood of $z$. It is clear that there is precisely one $ k \in \{ 1 , \ldots , K_t \} $ such that $ h^{ -j } ( x ) \in Y_{ t , k } $. Then note that $ j \in \{ 0 , \ldots , J_{ t ,k } - 1 \} $ since either $ j = 0 $ or $ h^k ( x ) \notin X_t $ for all $ k \in \{ - j  , \ldots , - 1 \} $. Thus, this proves that (\ref{defnSystemOfFiniteReturnTimeMaps(f)}) holds as well. Altogether, we see that $ ( X , h ) $ admits a system of finite first return time maps subordinate to $ \sP $.

($\Leftarrow$). Let $ ( \sP^{ ( n ) } ) $ be a generating sequence of partitions of $ X $. Use Construction \ref{constSequenceOfPartitions} to construct a sequence $ ( \sS^{ ( n ) } ) $ of finite first return time maps and adopt the notation of the construction. Define a map $ \psi : X \to Z $ by $ \psi ( x ) =  z $ if $ z \in Z $ and $ x \sim z $. Recall that by Lemma \ref{lemmaSimIsAnEquivalenceRelation}, $ \sim $ is an equivalence relation, and by Lemma \ref{lemmaZIsClosed}, $ \psi $ is a well-defined map.

We claim that $ \psi $ and $ Z $ satisfy the conditions of Definition \ref{defnFiberwiseEssentiallyMinimal}. It is obvious that $ \psi|_Z $ is the identity. To see that $ \psi $ is continuous, let $ x \in X $ and let $ V $ be an open neighborhood of $ \psi ( x ) $ in $ Z $. Since $ x \sim \psi ( x ) $, there is a sequence $ ( t_n ) $ such that $ t_n \in \bZ_{ > 0 } $ for all $ n \in \bZ_{ > 0 } $ and $ x , \psi ( x ) \in \bigcap_{ n = 1 }^\infty \bigcup_{ j \in \bZ } h^j ( X_{ t_n }^{ ( n ) } ) $. Since $ \psi ( x ) \in Z$, we have $ \psi ( x ) \in \bigcap_{ n = 1 }^\infty X_{ t_n }^{ ( n ) } $; because of this and because $ ( \sP^{ ( n ) } ) $ is a generating sequence of partitions, $ \{ X_{ t_n }^{ ( n ) }  \cap Z \setdiv \mbox{$ n \in \bZ_{ > 0 } $} \} $ is a neighborhood basis for $ \psi ( x ) $ in $ Z $, and so there is some $ n \in \bZ_{ > 0 } $ such that $ X_{ t_n }^{ ( n ) } \cap Z \subset V $. Since $ \psi ( X_{ t_n }^{ ( n ) } ) = X_{ t_n }^{ ( n ) } \cap Z $, we see $ \psi ( X_{ t_n }^{ ( n ) } ) \subset V $. Set $ U = \bigcup_{ j \in \bZ } h^j ( X_{ t_n }^{ ( n ) } ) $, which contains $ x $. Notice that since the equivalence classes of elements in $ U $ are the same as the equivalence classes of elements in $ X_{ t_n }^{ ( n ) } $, we have $ \psi ( U ) = \psi ( X_{ t_n }^{ ( n ) } ) \subset V $. Since $ x \in U $, $ U $ is a neighborhood of $ x $ such that $ \psi ( U ) \subset V $, which proves that $\psi$ is continuous.

To show that $ \psi $ is a quotient map, it remains to show that for $ S \subset Z $, if $ \psi^{ -1 } ( S ) $ is open, then $ S $ is open. But since $ \psi $ is the identity on $ Z $, we have $ \psi^{ -1 } ( S ) \cap Z = S $, and then the fact that $ \psi^{ -1 } ( S ) \cap Z $ is open in $ Z $ proves that $ \psi $ is indeed a quotient map.

To see that $ \psi \circ h = \psi $, let $ x \in X $ and let $ ( t_n ) $ be a sequence such that $ t_n \in \bZ_{ > 0 } $ and $ x \in \bigcup_{ j \in \bZ } h^j ( X_{ t_n }^{ ( n ) } ) $ for all $ n \in \bZ_{ > 0 } $. Then clearly $ h ( x ) \in \bigcup_{ j \in \bZ } h^j ( X_{ t_n }^{ ( n ) } ) $ for all $ n \in \bZ_{ > 0 } $, so $ x \sim h ( x ) $. Thus, $ \psi \circ h = \psi $.

Let $ z \in Z $. It is left to show that $ ( \psi^{ -1 } ( z ) , h|_{ \psi^{ -1 } ( z ) } ) $ is an essentially minimal system and $ z $ is in its minimal set. By Theorem 1.1 of \cite{HermanPutnamSkau92}, it suffices to show that, for every neighborhood $ V $ of $ z $ in $ \psi^{ -1 } ( z ) $, we have $\bigcup_{ j \in \bZ } h^j ( V ) = \psi^{ -1 } ( z ) $. So let $ V $ be a neighborhood of $ z $ in $\psi^{ -1 } ( z ) $, let $ V' $ be a neighborhood of $ z $ in $ X $ such that $ V' \cap \psi^{ -1 } ( z ) = V $, and let $ ( t_n ) $ be a sequence such that $ t_n \in \bZ_{ > 0 } $ for all $ n \in \bZ_{ > 0 } $ and $ z \in \bigcap_{ n = 1 }^\infty X_{ t_n }^{ ( n ) } $. Since $ ( \sP^{ ( n ) } ) $ is a generating sequence of partitions, there is some $ n \in \bZ_{ > 0 } $ such that $ X_{ t_n }^{ ( n ) } \subset V' $. Let $ x \in \psi^{ -1 } ( z ) $, so $ x \sim z $. This means that in particular, we have $ x \in X_{ t_n }^{ ( n ) } $. This tells us that $ \bigcup_{ j \in \bZ } h^j ( X_{ t_n }^{ ( n ) } \cap  \psi^{ -1 } ( z ) ) = \psi^{ -1 } ( z ) $, and since $ X_{ t_n }^{ ( n ) } \cap \psi^{ -1 } ( z )  \subset V' \cap \psi^{ -1 } ( z ) = V $, this shows us that $ \bigcup_{ j \in \bZ } h^j ( V ) = \psi^{ -1 } ( z ) $, as desired.
\end{proof}

\section{Proof of Theorem \ref{thmMainTheorem}}\label{sectionProof2}

We first state some facts about $ K $-theory that will be used in the proof of Theorem \ref{thmMainTheorem}. The following two propositions are well-known (see \cite{PimsnerVoiculescu80}).

\begin{prop}\label{propC(X)K0}
Let $ ( X , h ) $ be a zero-dimensional system. Then there is an isomorphism $ \vphi: K_0 ( C ( X ) ) \to C ( X , \bZ ) $ that sends $ [ \chi_E ] $ (where $ E $ is a compact open subset of $ X $) to $ \chi_E \in C ( X , \bZ ) $.
\end{prop}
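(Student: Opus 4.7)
The plan is to exploit the AF-structure of $C(X)$. Since $X$ is zero-dimensional compact metrizable, it admits a generating sequence of partitions $(\sP_n)$ in the sense of Definition \ref{defnGeneratingSequenceOfPartitions}, and one has $C(X) = \overline{\bigcup_n C(\sP_n)}$, realizing $C(X)$ as the inductive limit of the finite-dimensional commutative subalgebras $C(\sP_n) \cong \bC^{|\sP_n|}$.

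First, I would identify $K_0(C(\sP_n))$ at each finite stage. Writing $\sP_n = \{U_1, \ldots, U_{r_n}\}$ and using the standard computation $K_0(\bC^{r_n}) \cong \bZ^{r_n}$ via the rank map on projections, I would describe this isomorphism as the map that sends the class of a projection $p \in M_k(C(\sP_n))$ to the locally constant function $x \mapsto \text{rank}(p(x))$. This yields an identification $K_0(C(\sP_n)) \cong \{f \in C(X, \bZ) : f \text{ is constant on each element of } \sP_n\}$, under which the class $[\chi_{U_i}]$ maps to the function $\chi_{U_i}$, and more generally $[\chi_E] \mapsto \chi_E$ whenever $E$ is a union of elements of $\sP_n$. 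Moreover, the inclusion $C(\sP_n) \hookrightarrow C(\sP_{n+1})$ induces on $K_0$ exactly the inclusion of $\sP_n$-constant functions into $\sP_{n+1}$-constant functions, since refinement of partitions preserves rank functions.

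Second, I would pass to the direct limit. Continuity of $K_0$ gives
\[
K_0(C(X)) \;=\; \varinjlim_n K_0(C(\sP_n)) \;=\; \bigcup_n \{f \in C(X, \bZ) : f \text{ is } \sP_n\text{-constant}\}.
\]
This last union is all of $C(X, \bZ)$: any $f \in C(X, \bZ)$ has compact image in the discrete space $\bZ$, hence takes only finitely many values $n_1, \ldots, n_s$, whose preimages $f^{-1}(n_i)$ form a finite partition of $X$ into compact open sets, and this partition is refined by some $\sP_n$ because $(\sP_n)$ generates the topology. The resulting isomorphism $\vphi \colon K_0(C(X)) \to C(X, \bZ)$ then sends $[\chi_E] \mapsto \chi_E$ for each compact open $E \subset X$, by choosing $n$ large enough that $E$ is a union of elements of $\sP_n$ and applying the computation at the finite stage.

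There is no serious obstacle: the one point that needs care is the explicit form of the isomorphism $K_0(C(\sP_n)) \cong C(\sP_n, \bZ)$ at each finite stage and the compatibility with the connecting maps, both of which reduce to standard bookkeeping for $K_0$ of finite-dimensional commutative $C^*$-algebras. The totally-disconnected hypothesis on $X$ enters only through the existence of the generating sequence of partitions, which is precisely what makes $C(X)$ AF and forces continuous $\bZ$-valued functions to be locally constant.
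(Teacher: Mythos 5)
Your proof is correct. The paper does not actually supply a proof of this proposition; it labels it as well-known and cites Pimsner--Voiculescu. Your argument, realizing $C(X)$ as the inductive limit $\varinjlim_n C(\sP_n)$ of finite-dimensional commutative $C^*$-algebras along a generating sequence of partitions, identifying $K_0(C(\sP_n))$ with the $\sP_n$-constant $\bZ$-valued functions via the fiberwise rank map, checking compatibility with the (injective) connecting homomorphisms, and invoking continuity of $K_0$, is the standard way to establish this fact and is complete. The only step that deserves a sentence of justification is the claim that every finite partition of $X$ into compact open sets is refined by some $\sP_n$: for each $x$, the decreasing compact sets $V_n(x)\in\sP_n$ with $\bigcap_n V_n(x)=\{x\}$ eventually lie inside the open piece of the given partition containing $x$, and compactness of $X$ lets one pass to a single index $N$ that works globally; since $\sP_N$ refines every earlier $\sP_n$, it refines the given partition. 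With that observation, both the surjectivity of $\vphi$ onto $C(X,\bZ)$ and the normalization $\vphi([\chi_E])=\chi_E$ for compact open $E$ follow exactly as you state.
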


A particular consequence of the above proposition is that if $ E_1 $ and $ E_2 $ are compact open subset of $ X $ such that $ E_1 \neq E_2 $, then $ [ \chi_{ E_1 } ] \neq [ \chi_{ E_2 } ] $.

We introduce notation important to the following proposition. Let $ \sT $ denote the Toeplitz algebra, the universal $ C^* $-algebra generated by a single isometry $ s $. Let $ A $ be a unital $ C^* $-algebra and let $ \alpha $ be an automorphism of $ A $, and let $ u $ be the standard unitary of $ C^*( \bZ , A , \alpha ) $. We denote by $ \sT ( A , \alpha ) $ the Toeplitz extension of $ A $ by $ \alpha $, which is the subalgebra of $ C^* ( \bZ , A , \alpha ) \otimes \sT $ generated by $ A \otimes 1 $ and $ u \otimes s $. The ideal generated by $ A \otimes ( 1 - s s^* ) $ is isomorphic to $ A \otimes \sK $, and the quotient by this ideal is isomorphic to $ C^* ( \bZ , A , \alpha) $.

\begin{prop}\label{propKTheoryExactSequence}
Let $ ( X , h ) $ be a zero-dimensional system. Let $ \alpha $ be the automorphism of $ C ( X ) $ induced by $ h $; that is, $ \alpha $ is defined by $ \alpha ( f ) ( x ) = f ( h^{ -1 } ( x ) ) $ for all $ f \in C ( X ) $ and all $ x \in X $. Let $ \delta $ be the connecting map obtained from the exact sequence
\begin{center}
\begin{tikzcd}
0 \arrow[r] & C(X) \otimes \sK \arrow[r, "\iota"] & \sT(C(X),\alpha) \arrow[r, "\pi"] & C^*(\bZ,A,\alpha) \arrow[r] & 0,
\end{tikzcd}
\end{center}
where $ K_0 ( C ( X ) \otimes \sK ) $ is identified with $ K_0 ( C ( X ) ) $ in the standard way. Let $ i :   C(X) \to C^* ( \bZ , X , h ) $ be the natural inclusion. Then there is an exact sequence 
\begin{center}
\begin{tikzcd}
0 \arrow[r] & K_1(C^*(\bZ,X,h)) \arrow[r, "\delta"] & K_0(C(X)) \arrow[r, "\id-\alpha_*"] & K_0(C(X)) \arrow[r, "i_*"] & K_0(C^*(\bZ,X,h)) \arrow[r] & 0.
\end{tikzcd}
\end{center}
\end{prop}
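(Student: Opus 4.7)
The plan is to deduce this from the Pimsner--Voiculescu six-term exact sequence, which arises precisely from the Toeplitz extension displayed in the statement. The whole point is that because $X$ is totally disconnected, $K_1(C(X))$ vanishes, and the six-term cyclic sequence degenerates into the desired short exact sequence.

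First, I would invoke the main theorem of \cite{PimsnerVoiculescu80}: given the short exact sequence
\begin{equation*}
0 \to C(X) \otimes \sK \xrightarrow{\iota} \sT(C(X), \alpha) \xrightarrow{\pi} C^*(\bZ, X, h) \to 0,
\end{equation*}
one obtains a natural six-term exact sequence in $K$-theory
\begin{equation*}
\begin{tikzcd}
K_0(C(X)) \arrow[r, "\id - \alpha_*"] & K_0(C(X)) \arrow[r, "i_*"] & K_0(C^*(\bZ,X,h)) \arrow[d, "\partial"] \\
K_1(C^*(\bZ,X,h)) \arrow[u, "\delta"] & K_1(C(X)) \arrow[l] & K_1(C(X)) \arrow[l, "\id - \alpha_*"]
\end{tikzcd}
\end{equation*}
after making the identification $K_j(C(X) \otimes \sK) = K_j(C(X))$ via stability, and checking that the left-hand vertical map in the Pimsner--Voiculescu setup corresponds to $\id - \alpha_*$ and the composition along the top corresponds to the inclusion $i_* : K_0(C(X)) \to K_0(C^*(\bZ, X, h))$.

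Second, I would observe that since $X$ is a totally disconnected compact metrizable space, $C(X)$ is a (unital, commutative) AF-algebra; equivalently, $C(X)$ is the direct limit of finite-dimensional commutative $C^*$-algebras coming from any generating sequence of partitions (Definition \ref{defnGeneratingSequenceOfPartitions}). Since $K_1$ is continuous with respect to direct limits and vanishes on finite-dimensional $C^*$-algebras, we conclude $K_1(C(X)) = 0$. (Alternatively, one can cite that $K^1(X) = 0$ for any zero-dimensional compact Hausdorff space.)

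Feeding $K_1(C(X)) = 0$ into the six-term sequence kills the bottom middle and bottom right terms. Exactness at $K_1(C^*(\bZ,X,h))$ then forces $\delta$ to be injective, exactness at the right $K_0(C(X))$ makes $i_*$ surjective, and the middle portion remains as stated. Splicing these together yields the five-term exact sequence
\begin{equation*}
0 \to K_1(C^*(\bZ,X,h)) \xrightarrow{\delta} K_0(C(X)) \xrightarrow{\id - \alpha_*} K_0(C(X)) \xrightarrow{i_*} K_0(C^*(\bZ,X,h)) \to 0,
\end{equation*}
which is the conclusion. The only nontrivial step is really the invocation of Pimsner--Voiculescu together with the identification of the connecting map with the $\delta$ arising from the Toeplitz extension; the vanishing of $K_1(C(X))$ is standard, and the resulting collapse of the six-term sequence is purely formal.
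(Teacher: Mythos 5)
Your proposal is correct and takes essentially the same approach as the paper: the paper's proof is a one-line citation of Theorem 2.4 of \cite{PimsnerVoiculescu80} combined with the observation that $K_1(C(X)) = 0$, and your argument simply unpacks that citation by writing out the six-term sequence and explaining why it collapses.
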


\begin{proof}
Since $ K_1 ( C ( X ) ) = 0 $, this follows immediately from Theorem 2.4 of \cite{PimsnerVoiculescu80}.
\end{proof}

\begin{lemma}\label{lemmaK1ElementsFromInvariantProjections}
Let $ ( X , h ) $ be a zero-dimensional system and let $ E \subset X $ be a compact open $ h $-invariant subset of $ X $. Adopt the notation of Proposition \ref{propKTheoryExactSequence} and set $ p = \chi_E $. Then $ \delta ( [ p u p + ( 1 - p ) ] ) = [ p ] $. Moreover, if $ E $ is nonempty, then $ [ p u p + ( 1 - p ) ] \neq 0 $.
\end{lemma}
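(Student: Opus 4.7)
The plan is to apply the standard index-map formula associated to the Toeplitz extension underlying Proposition \ref{propKTheoryExactSequence}, by producing an explicit partial isometry lift of $pup + (1-p)$ in $\sT(C(X), \alpha)$ and then identifying the resulting defect projection.

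First, since $E$ is $h$-invariant we have $\alpha(p) = p$, so $p$ commutes with the standard unitary $u$. In particular $pup = p^2 u = pu$, and using $upu^* = p$ one checks directly that $pup + (1-p) = pu + (1-p)$ is a unitary in $C^*(\bZ, X, h)$.

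Second, I would take the natural lift
\[
V := (p \otimes 1)(u \otimes s)(p \otimes 1) + (1-p) \otimes 1 = pu \otimes s + (1-p) \otimes 1 \in \sT(C(X), \alpha).
\]
A short computation using only $s^*s = 1$ and $up = pu$ shows that $V^*V = 1$ and $VV^* = p \otimes ss^* + (1-p) \otimes 1$, so the defect projections are
\[
1 - V^*V = 0 \quad \text{and} \quad 1 - VV^* = p \otimes (1 - ss^*).
\]
The standard index-map formula (with the sign convention of Proposition \ref{propKTheoryExactSequence}) then gives $\delta([pup + (1-p)]) = [p \otimes (1 - ss^*)]$, which is identified with $[p] \in K_0(C(X))$ under the standard isomorphism $K_0(C(X) \otimes \sK) \cong K_0(C(X))$ (which sends the class of a tensor of $p$ with any rank-one projection in $\sK$ to $[p]$).

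For the final claim, I would invoke Proposition \ref{propC(X)K0}: when $E$ is nonempty, $[p]$ corresponds under the isomorphism $K_0(C(X)) \cong C(X, \bZ)$ to the nonzero function $\chi_E$, so $[p] \neq 0$ in $K_0(C(X))$. Combined with $\delta([pup + (1-p)]) = [p]$, this forces $[pup + (1-p)] \neq 0$ in $K_1(C^*(\bZ, X, h))$. The only point requiring mild care is matching the sign convention used for $\delta$ in Proposition \ref{propKTheoryExactSequence} against the convention in the lift formula; the rest is a direct computation using $h$-invariance of $p$.
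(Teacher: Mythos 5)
Your proof is correct and takes essentially the same approach as the paper. Your isometry $V = pu \otimes s + (1-p)\otimes 1$ is precisely the $(1,1)$-entry of the $2\times 2$ unitary the paper feeds into the Wegge-Olsen index-map formula (the paper's matrix is the standard dilation of your $V$, with $1 - VV^* = p \otimes (1-ss^*)$ appearing as its upper-right corner), and both arguments conclude by combining Proposition \ref{propC(X)K0} with the injectivity of $\delta$ from Proposition \ref{propKTheoryExactSequence}.
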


\begin{proof}
We use the exact sequence in Proposition \ref{propKTheoryExactSequence} and the definition of the connecting map as in Definition 8.1.1 of \cite{WeggeOlsen93Book}. Let $ p_1 $ be the matrix $\begin{psmallmatrix} 1 & 0 \\ 0 & 0 \end{psmallmatrix}$ and let 
\[
w = \begin{pmatrix} u p \otimes s + ( 1 - p ) \otimes 1 & p \otimes ( 1 - s s^* ) \\ 0 & p u^* \otimes s^* + ( 1 - p ) \otimes 1 \end{pmatrix} \in M_2 ( \sT ( A , \alpha ) ) .
\]
It is straightfoward to check that 
\[
\pi ( w ) = \begin{pmatrix} u p + ( 1 - p ) & 0 \\ 0 & ( u p + ( 1 - p ) )^* \end{pmatrix}.
\]
We also have 
\[
w^* p_1 w = \begin{pmatrix} 1 & 0 \\ 0 & p \otimes ( 1 - s s^* ) \end{pmatrix}.
\]
Thus, $ \delta( [ u p + ( 1 - p ) ]  ) = [ p ] $ as desired.

If $ E $ is nonempty, by Proposition \ref{propC(X)K0}, $ [ p ] \neq 0$. Since $ \delta $ is injective, this means that $ [ u p + ( 1 - p ) ] \neq 0 $.
\end{proof}

\begin{lemma}\label{lemmaTorsionFreeK1}
Let $(X,h)$ be a zero-dimensional system and let $E$ be an $h$-invariant compact open subset of $X$. Set $p=\chi_E$. Then $K_1(p C^*(\bZ,X,h) p)$ is torsion-free.
\end{lemma}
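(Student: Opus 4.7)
The plan is to reduce this to a standard Pimsner--Voiculescu computation after identifying the corner $pC^*(\bZ, X, h)p$ with a crossed product $C^*$-algebra.

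First I would observe that because $E$ is $h$-invariant, $\alpha(p) = \alpha(\chi_E) = \chi_{h(E)} = \chi_E = p$, so $p$ is fixed by $\alpha$. Consequently $up = \alpha(p)u = pu$, and since $p \in C(X)$ already commutes with $C(X)$, it commutes with all of $C^*(\bZ, X, h)$. Hence $p$ is central, so $pC^*(\bZ, X, h)p = pC^*(\bZ, X, h)$ is a direct summand of $C^*(\bZ, X, h)$, and in fact can be identified with $C^*(\bZ, E, h|_E)$ (the restriction of $h$ to $E$ is a well-defined homeomorphism because $E$ is compact open and $h$-invariant, and multiplication by $p$ restricts functions to $E$).

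Next I would apply Proposition \ref{propKTheoryExactSequence} to the zero-dimensional system $(E, h|_E)$. This yields the exact sequence
\[
0 \longrightarrow K_1(C^*(\bZ, E, h|_E)) \xrightarrow{\ \delta\ } K_0(C(E)) \xrightarrow{\ \id - \alpha_*\ } K_0(C(E)) \longrightarrow K_0(C^*(\bZ, E, h|_E)) \longrightarrow 0,
\]
so in particular $\delta$ embeds $K_1(C^*(\bZ, E, h|_E))$ as a subgroup of $K_0(C(E))$.

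Finally, by Proposition \ref{propC(X)K0} (applied to $E$, which is itself a compact totally disconnected metric space), $K_0(C(E)) \cong C(E, \bZ)$. The group $C(E, \bZ)$ is torsion-free: if $nf = 0$ pointwise for some $n \in \bZ_{>0}$, then $f(x) = 0$ for every $x \in E$. Any subgroup of a torsion-free abelian group is torsion-free, so $K_1(pC^*(\bZ, X, h)p) \cong K_1(C^*(\bZ, E, h|_E))$ is torsion-free. There is no real obstacle here; the only point that needs minor care is verifying the identification of the corner with $C^*(\bZ, E, h|_E)$, which follows once one checks that the standard unitary of the corner is $pu$ and that $pC(X) = C(E)$.
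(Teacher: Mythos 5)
Your proof is correct, and it takes a slightly different route from the paper's. The paper never identifies the corner $pC^*(\bZ,X,h)p$ with a crossed product. Instead, it first shows $K_1(C^*(\bZ,X,h))$ is torsion-free (PV exact sequence plus $K_0(C(X))\cong C(X,\bZ)$), then uses centrality of $p$ to write $A \cong pAp \oplus (1-p)A(1-p)$, hence $K_1(A) \cong K_1(pAp) \oplus K_1((1-p)A(1-p))$, and concludes because a direct summand of a torsion-free abelian group is torsion-free. You instead identify $pAp$ directly with $C^*(\bZ,E,h|_E)$ and apply Proposition \ref{propKTheoryExactSequence} to the subsystem $(E,h|_E)$. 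Both approaches are valid and roughly the same length. The paper's route avoids the bookkeeping of checking that $pAp$ really is the crossed product of the subsystem (which you rightly flag as the point needing care — the cleanest way is to note $C(X) \cong C(E)\oplus C(X\setminus E)$ equivariantly, so the crossed product splits accordingly). Your route is a bit more concrete and gives slightly more information (an explicit description of the corner as a crossed product), which the paper does not need for the argument at hand, but which is the kind of identification used elsewhere in similar contexts.
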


\begin{proof}
Set $ A = C^* ( \bZ , X , h ) $ for convenience of notation. By Proposition \ref{propC(X)K0}, $ K_0 ( C ( X ) ) $ is torsion-free. By Proposition \ref{propKTheoryExactSequence}, since $ K_1 ( A ) $ embeds into $ K_0 ( C ( X ) ) $, $ K_1 ( A ) $ must be torsion-free as well. Since $ p $ is a central projection, we have $ A \cong p A p \oplus ( 1 - p ) A ( 1 - p ) $. This means that we have $ K_1 ( A ) \cong K_1 ( p A p ) \oplus K_1 ( ( 1 - p ) A ( 1 - p ) ) $. Since $ K_1 ( p A p ) $ is a direct summand in a torsion-free group, it itself is torsion-free.
\end{proof}

\begin{lemma}\label{lemmaK1Corners}
Let $ A $ be a unital $ C^* $-algebra, let $ p $ be a projection in $ A $, and let $ v $ be a unitary in $ p A p $. Then if $ [ v + ( 1 - p ) ] \neq 0 $ in $ K_1 ( A ) $, we have $ [ v ] \neq 0 $ in $ K_1 ( p A p ) $.
\end{lemma}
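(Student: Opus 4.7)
The plan is to exploit the functoriality of $K_1$ applied to the corner inclusion $\iota \colon pAp \hookrightarrow A$, which is a (not necessarily unital) $*$-homomorphism of $C^*$-algebras. By functoriality, $\iota$ induces a group homomorphism $\iota_* \colon K_1(pAp) \to K_1(A)$, and the lemma will follow immediately by contrapositive once I identify this map on representatives as $[v] \mapsto [v + (1-p)]$.

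To verify this identification, note that since $v$ is a unitary in the unital $C^*$-algebra $pAp$ (whose unit is $p$), we have $v^* v = v v^* = p$. A short calculation then shows that $v + (1-p)$ is a unitary in $A$:
\[
(v + (1-p))^*(v + (1-p)) = v^* v + (1-p) = p + (1-p) = 1_A,
\]
and symmetrically for the other product. The same construction extends to matrix algebras: a unitary $V \in M_n(pAp)$ (meaning $V^* V = p \otimes 1_n$) is sent to $V + (1-p) \otimes 1_n \in M_n(A)$, and this assignment is manifestly compatible with homotopy of unitaries and the stabilization maps used to define $K_1$. This is the standard description of the induced map on $K_1$ coming from a corner embedding.

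With this identification in hand, the lemma is immediate: group homomorphisms send $0$ to $0$, so if $[v]_{K_1(pAp)} = 0$ then $[v + (1-p)]_{K_1(A)} = \iota_*([v]) = 0$, and the contrapositive is exactly the statement of the lemma. If one prefers to avoid invoking the induced map abstractly, the same argument can be phrased concretely: assuming $[v] = 0$ in $K_1(pAp)$, there exist $n \geq 1$ and a continuous path of unitaries $V_t$ in $M_n(pAp)$ from $\mathrm{diag}(v, p, \ldots, p)$ to $p \otimes 1_n$; then $V_t + (1-p) \otimes 1_n$ is a continuous path of unitaries in $M_n(A)$ from $\mathrm{diag}(v + (1-p), 1_A, \ldots, 1_A)$ to $1_A \otimes 1_n$, which witnesses $[v + (1-p)] = 0$ in $K_1(A)$. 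I do not foresee any substantive obstacle here; the only step requiring care is verifying that the corner inclusion induces the map $[v] \mapsto [v + (1-p)]$ on $K_1$, and this is routine.
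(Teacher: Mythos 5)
Your proposal is correct and essentially matches the paper's proof: the concrete version you give at the end (lifting the homotopy $V_t$ in $M_n(pAp)$ to $V_t + (1-p)\otimes 1_n$ in $M_n(A)$) is word-for-word the paper's argument, and your abstract framing via functoriality of $K_1$ applied to the corner inclusion is just a cleaner way of packaging the same computation.
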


\begin{proof}
Suppose that $ [ v ] = 0 $ in $ K_1 ( p A p ) $. This means that there is an $ n \in \bZ_{ > 0 } $ such that $ v \oplus \underbrace{p \oplus \cdots \oplus p}_{ n - 1 \text{ times}}$ is homotopic to $ \underbrace{p \oplus \cdots \oplus p}_{n \text{ times}}$ in the unitary group of $ M_n ( p A p ) $. Let $ ( x_t )_{ t \in [ 0 , 1 ] } $ be this homotopy. Define a homotopy $ ( y_t )_{ t \in [ 0 , 1 ] } $ in $ M_n ( A ) $ by $ y_t = x_t + \underbrace{(1-p) \oplus \cdots \oplus (1-p)}_{n \text{ times}}$ for all $ t \in [ 0 , 1 ] $. Then $ y_0 = ( v + ( 1 - p ) ) \oplus \underbrace{1 \oplus \cdots \oplus 1}_{n-1 \text{ times}}$ and $ y_1 = \underbrace{1 \oplus \cdots \oplus 1}_{n \text{ times}}$, which shows that $ [ v + ( 1 - p ) ] = 0 $ in $ K_1 ( A ) $.
\end{proof}

We now prove some (probably well-known) technical results.

\begin{lemma}\label{lemmaFiniteSpectrumLogarithm}
Let $ A $ be a unital $ C^* $-algebra, let $ N \in \bZ_{ > 0 } $ and let $ v \in A $ be a unitary with finite spectrum. Then there is a unitary $ w \in C^* ( v ) $, such that $ \| w - 1 \| \leq \pi / N $ and $ w^N = v $.
\end{lemma}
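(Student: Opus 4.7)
The plan is to use the continuous functional calculus, exploiting the finiteness of $\sigma(v)$ to reduce everything to an explicit computation in a finite direct sum of copies of $\bC$. Since $v$ is unitary, the unital $C^*$-algebra $C^*(v)$ is naturally isomorphic to $C(\sigma(v))$ via the Gelfand transform, and because $\sigma(v)$ is finite, this is just $\bC^k$ where $k = |\sigma(v)|$. In particular, the spectral projections $e_1,\ldots,e_k$ corresponding to the eigenvalues $\lambda_1,\ldots,\lambda_k$ of $v$ all lie in $C^*(v)$, and we have the decomposition $v = \sum_{j=1}^k \lambda_j e_j$ with $\sum_j e_j = 1$ and the $e_j$ mutually orthogonal.

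Next I would build $w$ by carefully choosing principal $N$-th roots of each eigenvalue. For each $j$, write $\lambda_j = e^{i\theta_j}$ with $\theta_j \in (-\pi, \pi]$, and set $\mu_j = e^{i\theta_j/N}$; then define $w = \sum_{j=1}^k \mu_j e_j \in C^*(v)$. By construction $w$ is unitary (each $\mu_j$ lies on the unit circle and the $e_j$ are mutually orthogonal projections summing to $1$), and $w^N = \sum_j \mu_j^N e_j = \sum_j e^{i\theta_j} e_j = v$.

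It remains to estimate $\|w-1\|$. Because the $e_j$ are mutually orthogonal with sum $1$, we have
\[
\|w - 1\| = \Bigl\| \sum_{j=1}^k (\mu_j - 1) e_j \Bigr\| = \max_{1 \leq j \leq k} |\mu_j - 1|.
\]
For each $j$, the standard estimate $|e^{i\alpha} - 1| = 2|\sin(\alpha/2)| \leq |\alpha|$ for $|\alpha| \leq \pi$ gives $|\mu_j - 1| = |e^{i\theta_j/N} - 1| \leq |\theta_j|/N \leq \pi/N$, since $|\theta_j| \leq \pi$. Hence $\|w-1\| \leq \pi/N$ as required.

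There is no real obstacle here: the whole point is the explicit choice of the principal branch of the $N$-th root (ensuring $|\theta_j| \leq \pi$), which keeps each $\mu_j$ within the arc of radius $\pi/N$ around $1$. The finiteness of $\sigma(v)$ makes the choice canonical and places $w$ in $C^*(v)$ without any approximation argument.
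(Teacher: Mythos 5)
Your proof is correct and takes essentially the same approach as the paper's: both choose a branch of the argument on $\spec(v)$ with values in $[-\pi,\pi]$, divide by $N$, and exponentiate inside $C^*(v)$. The paper phrases this by first producing a self-adjoint logarithm $b \in C^*(v)$ with $\spec(b) \subset [-\pi,\pi]$ and $\exp(ib)=v$ and then setting $w = \exp(ib/N)$, while you make the same computation concrete by decomposing $v = \sum_j \lambda_j e_j$ over the (finitely many) spectral projections and defining $w$ eigenvalue-by-eigenvalue; the two are just abstract and explicit renderings of the identical functional-calculus argument, and your norm estimate $|e^{i\theta/N}-1| = 2|\sin(\theta/2N)| \leq |\theta|/N \leq \pi/N$ is the same bound the paper uses.
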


\begin{proof}
Write $ \spec ( v ) = \{ \lambda_1 , \ldots , \lambda_K \} \subset S^1 $. Since $ \spec ( v ) \neq S^1 $, by functional calculus there is a self-adjoint element $ b \in A $ such that $ \exp ( b ) = v $ and such that $ \spec ( b ) \subset [ -\pi , \pi ] $. Setting $ c = ( 1 / N ) b $, we have $ \spec ( c ) \subset [ -\pi / N , \pi / N ] $. Set $ w = \exp ( c ) $, a unitary in $ A $. Clearly $ w^N = \exp ( N c ) = \exp ( b ) = v $. We compute
\begin{align*}
\| w - 1 \| &= \| \exp ( c ) - 1 \| \\
&\leq \max_{ \lambda \in \spec ( c ) } | \exp( \lambda ) - 1 | \\
&\leq \max_{ \lambda \in \spec ( c ) } | \lambda - 0 | \\
&\leq \pi / N ,
\end{align*}
finishing the proof.
\end{proof}

\begin{exmp}\label{exmpBergs}
Lemma \ref{lemmaFiniteSpectrumLogarithm} is one of the results that allows us to use Berg's technique to approximate the standard unitary $ u $ in a circle subalgebra of $ C^* ( \bZ , X , h ) $ in the proof of Theorem \ref{thmMainTheorem}. Because of the importance of this lemma, it is nice to have a concrete example in mind that is relevant to its use in the proof. Using the notation of Lemma \ref{lemmaFiniteSpectrumLogarithm}, if $ A = M_2 $ and $ v = \begin{psmallmatrix} 0 & 1 \\ 1 & 0 \end{psmallmatrix} $, then we can take $ w = \begin{psmallmatrix} \cos ( \theta ) e^{ i \theta }  & \sin ( \theta ) e^{ - i \theta }  \\ \sin ( \theta ) e^{ - i \theta }   & \cos ( \theta ) e^{ i \theta }  \end{psmallmatrix} $ where $ \theta = 2\pi/N $. This is precisely how this lemma would be used in the proof of Theorem \ref{thmMainTheorem} if we took $ X = \bZ \cup \{ \infty \} $, $ h $ to be the shift homeomorphism, $ T = 1 $,
\[
X_1 = ( ( - \infty , a ] \cap \bZ ) \sqcup  ( [ b , \infty ) \cap \bZ ) \sqcup \{ \infty \},
\]
\[
Y_1 = ( ( - \infty , a - 1 ] \cap \bZ ) \sqcup  ( [ b , \infty ) \cap \bZ ) \sqcup \{ \infty \},
\] 
and
\[
Y_2 = \{ a \} .
\]
Using the notation of the proof of Theorem \ref{thmMainTheorem}, we would have $ Y = \{ a \} \sqcup \{ b \} $ and $ \chi_{ Y } v_2 v_1^* \chi_{ Y } $ would be the unitary that switches $ \chi_{ \{ a \} } $ with $ \chi_{ \{ b \} } $, which can be realized as $  \begin{psmallmatrix} 0 & 1 \\ 1 & 0 \end{psmallmatrix} $ in $ \chi_{ Y } A_1 \chi_{ Y } \cong M_2 $. (See Example \ref{exmpIntegers} after the proof of Theorem \ref{thmMainTheorem}.)
\end{exmp}

\begin{lemma}\label{lemmaOrthogonalElementsNorm}
Let $ A $ be a $ C^* $-algebra, let $ L  \in \bZ_{ > 0 } $, and let $ a , a_1 , \ldots , a_m $ be positive elements in $ A $ such that $ a = \sum_{ m = 1 }^M a_m $ and $ a_m \perp a_{ m' } $ for $ m , m' \in \{ 1 , \ldots , M \} $ with $ m \neq m' $. Then $ \| a \| = \max_{ 1 \leq l \leq M } \| a_m \| $.
\end{lemma}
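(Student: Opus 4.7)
The plan is to reduce to the commutative case via Gelfand duality. The key observation is that positive, pairwise orthogonal elements generate a commutative $ C^* $-subalgebra. Indeed, for $ m \neq m' $, the assumption $ a_m \perp a_{ m' } $ means $ a_m a_{ m' } = 0 $; since both elements are self-adjoint, taking adjoints gives $ a_{ m' } a_m = 0 $ as well. Hence $ a_1 , \ldots , a_M $ pairwise commute, and the $ C^* $-subalgebra $ B \subset A $ they generate is commutative. It contains $ a = \sum_{ m = 1 }^M a_m $.

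By Gelfand's theorem, $ B \cong C_0 ( \Omega ) $ for some locally compact Hausdorff space $ \Omega $. Under this isomorphism, each $ a_m $ corresponds to a nonnegative function $ f_m \in C_0 ( \Omega ) $, and $ a $ corresponds to $ f := \sum_{ m = 1 }^M f_m $. The orthogonality $ a_m a_{ m' } = 0 $ for $ m \neq m' $ translates to $ f_m ( \omega ) f_{ m' } ( \omega ) = 0 $ for every $ \omega \in \Omega $. Thus at each point $ \omega \in \Omega $, at most one of the values $ f_1 ( \omega ) , \ldots , f_M ( \omega ) $ is nonzero, which gives $ f ( \omega ) = \max_{ 1 \leq m \leq M } f_m ( \omega ) $. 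Taking the supremum over $ \omega $ yields
\[
\| a \| = \| f \|_\infty = \sup_{ \omega \in \Omega } \max_{ 1 \leq m \leq M } f_m ( \omega ) = \max_{ 1 \leq m \leq M } \sup_{ \omega \in \Omega } f_m ( \omega ) = \max_{ 1 \leq m \leq M } \| a_m \| ,
\]
which is the desired conclusion.

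There is no serious obstacle here; the only point requiring a sentence of justification is that orthogonality of positive elements forces commutativity, after which everything collapses into a statement about continuous functions with disjoint supports. (The parameter $ L $ appearing in the hypothesis plays no role in the statement or the argument.)
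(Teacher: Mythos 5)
Your proof is correct, but it takes a genuinely different route from the paper. The paper's proof chooses a faithful representation $\pi : A \to B(\sH)$ and asserts that $\|\pi(a)\| = \max_m \|\pi(a_m)\|$ ``from the operator norm'' — the implicit argument being that orthogonality of the positive operators $\pi(a_m)$ forces the closures of their ranges to be pairwise orthogonal subspaces, so $\pi(a)$ decomposes as a direct sum and the norm of a direct sum is the maximum of the norms. You instead observe that positivity plus $a_m a_{m'} = 0$ forces the elements to commute, pass to the commutative $C^*$-subalgebra they generate, and read the identity off from Gelfand duality as a statement about nonnegative functions with disjoint supports. Both arguments are valid; yours is the more self-contained of the two, in that every step is justified on the page, whereas the paper's compresses a nontrivial spatial decomposition into a single clause. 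The Gelfand route also avoids any appeal to the GNS/representation machinery, which may be considered slightly more elementary. You are also right that the parameter $L$ in the hypothesis is vestigial (the statement as printed also mixes the index letters $l$, $m$, and $M$), and your argument correctly ignores it.
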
 

\begin{proof}
Let $ \sH $ be a Hilbert space and let $ \pi: A \to B ( \sH ) $ be a faithful representation. Then from the operator norm on $ B ( \sH ) $, we know $ \| \pi ( a ) \| = \max_{ 1 \leq l \leq L } \| \pi ( a_l ) \| $. Since the representation is faithful, the conclusion follows. 
\end{proof}

\begin{lemma}\label{lemmaChrisProjectionCutdownLemma}
Let $A$ be a unital $C^*$-algebra, let $a \in A$, let $\eps>0$, let $ M \in \bZ_{ > 0 } $, and let $p_1,\ldots,p_M$ and $q_1,\ldots,q_M$ be projections in $A$ such that $\sum_{m=1}^M p_m = \sum_{m=1}^M q_m = 1$. Then $ p_m a q_n = 0 $ for all $ m , n \in \{ 1 , \ldots , M \} $ with $ m \neq n $ implies $ \| a \| \leq \max_m \| p_m a q_m \| $. 
\end{lemma}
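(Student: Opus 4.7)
The plan is to reduce to the situation handled by Lemma \ref{lemmaOrthogonalElementsNorm} by forming $a^*a$ and observing that it decomposes as a sum of mutually orthogonal positive elements.

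First, I would observe that the hypothesis $\sum_{m=1}^M p_m = 1$ forces the $p_m$ to be pairwise orthogonal (and likewise for the $q_n$). Indeed, for any fixed $m$,
\[
p_m = p_m \cdot 1 \cdot p_m = \sum_{n=1}^M p_m p_n p_m = p_m + \sum_{n \neq m} p_m p_n p_m,
\]
so $\sum_{n \neq m} p_m p_n p_m = 0$; since each term $p_m p_n p_m = (p_n p_m)^*(p_n p_m)$ is positive, we conclude $p_n p_m = 0$ whenever $n \neq m$. The same argument applies to the $q_n$.

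Next, expanding $a = 1 \cdot a \cdot 1 = \left(\sum_m p_m\right) a \left(\sum_n q_n\right) = \sum_{m,n} p_m a q_n$ and applying the hypothesis $p_m a q_n = 0$ for $m \neq n$, we get $a = \sum_{m=1}^M p_m a q_m$. Set $a_m = p_m a q_m$. Using $q_m q_{m'} = 0$ for $m \neq m'$, we compute
\[
a_m^* a_{m'} = q_m a^* p_m p_{m'} a q_{m'} = 0 \quad \text{for } m \neq m',
\]
so that
\[
a^* a = \sum_{m,m'} a_m^* a_{m'} = \sum_{m=1}^M a_m^* a_m.
\]
The summands $a_m^* a_m$ are positive and live in the mutually orthogonal corners $q_m A q_m$, so they are pairwise orthogonal.

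Finally, applying Lemma \ref{lemmaOrthogonalElementsNorm} to the decomposition $a^* a = \sum_m a_m^* a_m$ gives $\|a^* a\| = \max_m \|a_m^* a_m\| = \max_m \|a_m\|^2$, hence $\|a\| = \max_m \|p_m a q_m\|$ (in fact with equality, which is slightly stronger than the stated inequality). There is no real obstacle here; the only mildly subtle point is extracting the mutual orthogonality of the $p_m$ (and $q_m$) from the sum-to-one condition, and the rest is bookkeeping together with a direct appeal to Lemma \ref{lemmaOrthogonalElementsNorm}.
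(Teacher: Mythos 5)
Your proof is correct and follows essentially the same route as the paper: decompose $a = \sum_m p_m a q_m$, compute $a^*a = \sum_m (p_m a q_m)^*(p_m a q_m)$, verify mutual orthogonality of the summands, and invoke Lemma \ref{lemmaOrthogonalElementsNorm}. Your added observations — that the sum-to-one hypothesis forces pairwise orthogonality of the $p_m$ (and $q_n$), and that one actually obtains equality rather than just $\leq$ — are both correct and harmless, but do not change the substance of the argument.
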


\begin{proof}
Set $ \eps = \max_m \| p_m a q_m \|$. The hypotheses imply that 
\[
a = \left( \sum_{ m = 1 }^M p_m \right) a \left( \sum_{ m = 1 }^M q_m \right) = \sum_{ m = 1 }^M p_m a q_m .
\]
Now consider
\begin{align*}
a^*a &= \left( \sum_{ m = 1 }^M q_m a^* p_m \right) \left( \sum_{ m = 1 }^M p_m a q_m \right) \\
&= \sum_{ m = 1 }^M q_m a^* p_m a q_m \\
&= \sum_{ m = 1 }^M ( p_m a q_m )^* ( p_m a q_m ) .
\end{align*}
We can apply Lemma \ref{lemmaOrthogonalElementsNorm} with $ a $ replaced by $ a^*a $ and $ a_m $ replaced by $ ( p_m a q_m )^* ( p_m a q_m ) $ for all $ m \in \{ 1 , \ldots , M \} $. To check the hypotheses of the lemma, note for all $ m , m' \in \{ 1 , \ldots, M \} $ with $ m \neq m' $, we have $ q_m \perp q_{ m' }$, and so $  ( p_m a q_m )^* ( p_m a q_m )  \perp  ( p_{ m' } a q_{ m' } )^* ( p_{ m' } a q_{ m' } )  $. Now, for all $ m \in \{ 1 , \ldots , M \} $, we have $ \| p_m a q_m \| \leq \eps $, and so $ \|  ( p_m a q_m )^* ( p_m a q_m )  \| = \| p_m a q_m \|^* \leq \eps^2 $. Thus, Lemma \ref{lemmaOrthogonalElementsNorm} tells us that $ \| a^* a \| \leq \eps^2 $, and hence $ \| a \| \leq \eps $ as desired.
\end{proof}

\begin{lemma}\label{lemmaMnOfS1Isomorphism}
Let $ A $ be a unital $ C^* $-algebra, let $ n \in \bZ_{ > 0 } $, let $ ( e_{ i , j } )_{ 1 \leq i , j \leq n } $ be matrix units for a unital copy of $ M_n $ inside $ A $ (call this $ B_0 $), and let $ u \in A $ be a unitary. Let $ B $ be the $ C^* $-subalgebra of $ A $ generated by $ B_0 $ and $ u $. Suppose that $ u $ commutes with $ e_{ i , j } $ for all $ i , j \in \{ 1 , \ldots , n \} $ and that $ \spec ( u ) = S^1 $. Then $ B \cong C ( S^1 ) \otimes M_n $.
\end{lemma}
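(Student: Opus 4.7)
The plan is to produce an explicit isomorphism $\Phi \colon M_n(C(S^1)) \to B$, since $M_n(C(S^1)) \cong C(S^1) \otimes M_n$ canonically. Using the hypothesis $\spec(u) = S^1$, the continuous functional calculus provides an isomorphism $\psi \colon C(S^1) \to C^*(u)$ with $\psi(z) = u$, where $z$ denotes the identity function on $S^1$. I would then define
\[
\Phi((f_{i,j})) \;=\; \sum_{i,j} \psi(f_{i,j}) \, e_{i,j}.
\]
Because $u$ commutes with each matrix unit, the same holds for every element of $\psi(C(S^1))$. A direct computation using this commutation together with the standard relations $e_{i,j} e_{k,l} = \delta_{j,k} e_{i,l}$ and $e_{i,j}^* = e_{j,i}$ shows that $\Phi$ is a $*$-homomorphism.

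Surjectivity is immediate: each matrix unit $e_{i,j}$ is the image of the scalar matrix unit $E_{i,j}$, and $u$ is the image of $z \cdot I_n$, so the image of $\Phi$ contains the generators of $B$. For injectivity, I would argue as follows. Suppose $\Phi((f_{i,j})) = 0$. Multiplying on the left by $e_{k,k}$ and on the right by $e_{l,l}$ yields $\psi(f_{k,l}) e_{k,l} = 0$, and multiplying this on the right by $e_{l,k}$ gives
\[
\psi(f_{k,l}) \, e_{k,k} \;=\; 0.
\]
For any $j$, using that $\psi(f_{k,l})$ commutes with every $e_{a,b}$, I compute
\[
\psi(f_{k,l}) \, e_{j,j} \;=\; \psi(f_{k,l}) \, e_{j,k} e_{k,j} \;=\; e_{j,k} \, \psi(f_{k,l}) \, e_{k,k} \, e_{k,j} \;=\; 0.
\]
Summing over $j$ and using $\sum_j e_{j,j} = 1$ yields $\psi(f_{k,l}) = 0$, and since $\psi$ is an isomorphism, $f_{k,l} = 0$ for every $(k,l)$.

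The main obstacle I anticipate is simply keeping the matrix unit manipulation transparent; there is no deep difficulty, only the bookkeeping of commutators. An alternative route, which avoids the elementwise calculation, is to invoke that the closed two-sided ideals of $C(S^1) \otimes M_n$ correspond bijectively to closed subsets of $S^1$ (because $M_n$ is simple), and then to observe that the restriction of $\Phi$ to $C(S^1) \otimes 1$ is the map $f \mapsto f(u)$, which is injective by $\spec(u) = S^1$; this forces $\ker \Phi$ to meet $C(S^1) \otimes 1$ trivially, hence to correspond to the closed set $S^1$ itself, hence to be zero.
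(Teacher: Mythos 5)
Your proposal is correct. You and the paper build the isomorphism from the same underlying formula (the paper via the universal property of $C(S^1)\otimes M_n$, you via a directly-defined $\Phi$ on $M_n(C(S^1))$), so surjectivity and the $*$-homomorphism checks amount to the same computations. Where you genuinely diverge is in the injectivity argument. The paper works on the dense $*$-subalgebra of elements with Laurent-polynomial entries, notes that $\sum_k\lambda_{i,j,k}u^k=0$ forces all coefficients to vanish because $\spec(u)=S^1$, and concludes $\ker\varphi=0$; this leaves two steps implicit (extracting the $(i,j)$-entry from a vanishing combination needs exactly the matrix-unit cutdown you perform, and injectivity on a dense $*$-subalgebra does not by itself force a closed-ideal kernel to vanish). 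Your cutdown argument — multiply by $e_{k,k}$ and $e_{l,l}$, pass to $\psi(f_{k,l})e_{k,k}=0$, propagate to $\psi(f_{k,l})e_{j,j}=0$ via commutation, and sum over the resolution of the identity — runs on arbitrary elements of $M_n(C(S^1))$, so it is self-contained and does not depend on a density argument at all. Your alternative route through the ideal lattice of $C(S^1)\otimes M_n$ (ideals are $C_0(U)\otimes M_n$, the restriction of $\Phi$ to $C(S^1)\otimes 1$ is $f\mapsto f(u)$ which is injective since $\spec(u)=S^1$, hence $U=\varnothing$) is also complete and is in fact the cleanest patch for the paper's dense-subalgebra phrasing; it has the side benefit of working verbatim with $M_n$ replaced by any simple unital $C^*$-algebra. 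Either of your arguments is fine as written; the cutdown is more elementary, the ideal-theoretic one more structural.
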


\begin{proof}
Recall that $ C ( S^1 ) \otimes M_n $ is the universal $ C^* $-algebra generated by $ ( f_{ i , j } )_{ 1 \leq i , j \leq n } $ and $ v $ satisfying the relations
\begin{enumerate}[(a)]
\item \label{lemmaMnOfS1Isomorphism(a)} $ f_{ i , j } f_{ i' , j' } = \delta_{ j , i' } f_{ i , j' } $ for all $ i , j , i' , j' \in \{ 1 , \ldots , n \} $,
\item \label{lemmaMnOfS1Isomorphism(b)} $ \sum_{ i = 1 }^n f_{ i , i } = 1 $,
\item \label{lemmaMnOfS1Isomorphism(c)} $ v v^* = v^* v = 1 $,
\item \label{lemmaMnOfS1Isomorphism(d)} $ f_{ i , j } v = v f_{ i , j  } $ for all $ i , j \in \{ 1 , \ldots , n \} $.
\end{enumerate}
Identify $ v $ with $ z \otimes 1 $, where $ z \in C ( S^1 ) $ is the identity map. Let $ ( g_{ i , j } )_{ 1 \leq i , j \leq n } $ be the standard matrix units for $ M_n $ and identify $ f_{ i , j } $ with $ 1 \otimes g_{ i , j } $ for all $ i , j \in \{ 1 , \ldots , n \} $. Since $ ( e_{ i , j } )_{ 1 \leq i , j \leq n } $ and $ u $ satisfy the relations (\ref{lemmaMnOfS1Isomorphism(a)})--(\ref{lemmaMnOfS1Isomorphism(d)}) as well, by the universal property there is a surjective $ * $-homomorphism $ \vphi: C ( S^1 ) \otimes M_n \to B $ such that $ \vphi ( z \otimes 1 ) = u $ and $ \vphi ( 1 \otimes g_{ i , j } ) = e_{ i , j } $ for all $ i , j \in \{ 1 , \ldots , n \} $. 

For each $ i , j \in \{ 1 , \ldots , n \}$, let $ N_{ i , j } $ and $ M_{ i , j } $ be integers such that $ M_{ i , j } \leq N_{ i , j }$. For each $ i , j \in \{ 1 , \ldots , n \}$ and each $ k \in \{ M_{ i , j } , \ldots , N_{ i , j } \} $, let $ \lambda_{ i , j , k } $ be a complex number. Then define
\[
x = \sum_{ i , j = 1 }^n \left( \sum_{ k = M_{ i , j } }^{ N_{ i , j } } \lambda_{ i , j , k } z^k \right) \otimes g_{ i , j } \in C ( S^1 ) \otimes M_n.
\]
Note that elements of the above form are dense in $ C ( S^1 ) \otimes M_n $. It is clear that 
\[
\vphi ( x ) = \sum_{ i , j = 1 }^n \left( \sum_{ k = M_{ i , j } }^{ N_{ i , j } } \lambda_{ i , j , k } u^k \right) e_{ i , j } .
\]
For each $ i , j \in \{ 1 , \ldots , n \} $, since $ \spec ( u ) = S^1 $, we have $ C^* ( u ) \cong C ( S^1 ) $, and so $ \sum_{ k = M_{ i , j } }^{ N_{ i , j } }  \lambda_{ i , j , k } u^k = 0 $ if and only if $ \lambda_{ i , j , k } = 0 $ for all $ k \in \{ N_{ i , j } , \ldots, M_{ i , j } \} $. But this means that $ \vphi ( x ) = 0 $ implies that $ x = 0 $, meaning that $ \ker ( \vphi ) $ is trivial. Thus, $ \vphi $ is an isomorphism.
\end{proof}

The following lemma tells us when a collection of compact open sets can be used as bases of a system of finite first return time maps. This is an important tool for constructing such systems.

\begin{lemma}\label{lemmaWhereTheBasesMustBe}
Let $ ( X , h ) $ be a fiberwise essentially minimal zero-dimensional system, let $ \sP $ be a partition of $ X $, let $ Z $ and $ \psi $ be as in Definition \ref{defnFiberwiseEssentiallyMinimal}, and let $ X_1 , \ldots , X_T $ be compact open subsets of $ X $, each of which is contained in an element of $ \sP $. Then there is a system $ \systembasic $ of finite first return time maps subordinate to $\sP$ (where $ X_1 , \ldots , X_T $ are as in the first sentence) if and only if for all $ z \in Z $, there is precisely one $ t \in \{ 1 , \ldots , T \} $ such that $ X_t $ intersects $ \psi^{ -1 } ( z ) $, and this intersection intersects the minimal set of $ ( \psi^{ -1 } ( z ) , h|_{ \psi^{ -1 } ( z ) } ) $. 
\end{lemma}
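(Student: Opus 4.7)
The plan is to prove both implications by exploiting the fibered structure, using Proposition \ref{propPropertiesOfX_t} for the forward direction and the essential-minimal first-return theory (Proposition \ref{propEssentiallyMinimalReturnTimeMap}) fiberwise for the backward direction. Throughout I will freely use that $\psi \circ h = \psi$, so every $h$-iterate preserves fibers, and every compact open $h$-invariant subset of $X$ restricts to a clopen $h$-invariant subset of each fiber.

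For ($\Rightarrow$), assume a system $\systembasic$ exists. By Proposition \ref{propPropertiesOfX_t}, the sets $W_t := \bigcup_{j\in\bZ} h^j(X_t)$ for $t=1,\ldots,T$ form a clopen, $h$-invariant partition of $X$. Fix $z\in Z$ and intersect this partition with $\psi^{-1}(z)$: since each $W_t \cap \psi^{-1}(z)$ is clopen and $h|_{\psi^{-1}(z)}$-invariant, and the fiber $\psi^{-1}(z)$ is essentially minimal with minimal set $M_z\ni z$, the pairwise disjoint pieces $(W_t\cap\psi^{-1}(z))_t$ force exactly one of them to be nonempty (else two disjoint closed invariant sets would each meet $M_z$). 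Call that index $t$. Then $M_z\subset W_t$, and since $M_z$ is $h$-invariant and $W_t=\bigcup_j h^j(X_t)$, applying $h^{-j}$ yields $X_t\cap M_z\neq\varnothing$; in particular $X_t\cap \psi^{-1}(z)\neq\varnothing$. Uniqueness of $t$ is immediate, because for any $t'\neq t$ the set $W_{t'}$ is disjoint from $W_t\supset\psi^{-1}(z)$, so $X_{t'}\cap\psi^{-1}(z)\subset W_{t'}\cap\psi^{-1}(z)=\varnothing$.

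For ($\Leftarrow$), assume the unique-intersection condition. I will construct $\sS$ by setting $Y_{t,k}=\lambda_{X_t}^{-1}(J_{t,k})$ for the distinct values $J_{t,1},\ldots,J_{t,K_t}$ of $\lambda_{X_t}$. The crux is to show that $\lambda_{X_t}$ is finite-valued with finite range. Given $x\in X_t$, let $z=\psi(x)$; by hypothesis $V:=X_t\cap\psi^{-1}(z)$ is a compact open neighborhood of some point of $M_z$, so Proposition \ref{propEssentiallyMinimalReturnTimeMap}\eqref{propEssentiallyMinimalReturnTimeMap(b)} applied inside $(\psi^{-1}(z),h|_{\psi^{-1}(z)})$ gives $\lambda_V(x)<\infty$. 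Because $\psi\circ h=\psi$, the iterates $h^n(x)$ stay in $\psi^{-1}(z)$, so $\lambda_{X_t}(x)=\lambda_V(x)<\infty$. Thus $\lambda_{X_t}$ takes values in $\bZ_{>0}$, and by Proposition \ref{propLambdaUContinuous} it is continuous; since $\bZ_{>0}$ is discrete in $\bZ_{>0}\cup\{\infty\}$, the image of the compact set $X_t$ is both compact and discrete, hence finite.

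With $Y_{t,k}$, $J_{t,k}$ defined as above, conditions \eqref{defnSystemOfFiniteReturnTimeMaps(a)}--\eqref{defnSystemOfFiniteReturnTimeMaps(d)} of Definition \ref{defnSystemOfFiniteReturnTimeMaps} are immediate. For \eqref{defnSystemOfFiniteReturnTimeMaps(e)}, I decompose $X_t=\bigsqcup_{z\in Z_t}V_z$ where $Z_t$ is the set of $z$ whose unique index is $t$ and $V_z=X_t\cap\psi^{-1}(z)$, then apply Proposition \ref{propEssentiallyMinimalReturnTimeMap}\eqref{propEssentiallyMinimalReturnTimeMap(c)} fiberwise to get $\{h^{\lambda_{V_z}(x)}(x):x\in V_z\}=V_z$, and reassemble using $\lambda_{V_z}=\lambda_{X_t}|_{V_z}$. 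For \eqref{defnSystemOfFiniteReturnTimeMaps(f)}, given $x\in X$ with $z=\psi(x)$ and $t$ the unique index for $z$, Proposition \ref{propEssentiallyMinimalReturnTimeMap}\eqref{propEssentiallyMinimalReturnTimeMap(d)} inside $\psi^{-1}(z)$ provides some $j\geq 0$ with $h^{-j}(x)\in V_z\subset X_t$; taking the minimal such $j$ and the unique $k$ with $h^{-j}(x)\in Y_{t,k}$, a short argument (minimality of $j$ forces $j<J_{t,k}$, else $j-J_{t,k}\geq 0$ would give a smaller valid exponent) places $x$ in $h^j(Y_{t,k})$, and disjointness of the $h^j(Y_{t,k})$ follows from uniqueness of $t$ across fibers together with the definition of first return.

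The main obstacle I anticipate is the careful bookkeeping in condition \eqref{defnSystemOfFiniteReturnTimeMaps(f)}: one must simultaneously argue existence of the representation $x\in h^j(Y_{t,k})$ with $0\leq j<J_{t,k}$ and its uniqueness across all triples $(t,k,j)$, which requires combining the fiberwise essential-minimality input with the global uniqueness of $t$ at each fiber. Everything else is a matter of translating statements from the fiber dynamics to the global dynamics via the identity $\lambda_{X_t}(x)=\lambda_{X_t\cap\psi^{-1}(\psi(x))}(x)$.
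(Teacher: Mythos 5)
Your proof is correct and takes essentially the same approach as the paper's: the forward direction exploits the disjoint partition $\bigsqcup_t \bigcup_j h^j(X_t) = X$ from Proposition \ref{propPropertiesOfX_t}, and the backward direction defines $Y_{t,k}$ as level sets of $\lambda_{X_t}$ and verifies Definition \ref{defnSystemOfFiniteReturnTimeMaps} via the fiberwise dynamics using Propositions \ref{propEssentiallyMinimalReturnTimeMap} and \ref{propLambdaUContinuous}. A minor point in your favor: you make the \emph{uniqueness} of $t$ in the forward direction explicit, arguing that the disjoint clopen invariant pieces $W_t \cap \psi^{-1}(z)$ cannot admit two nonempty members in an essentially minimal fiber; the paper's proof only argues existence by contradiction and leaves this uniqueness step implicit.
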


\begin{proof}
($\Rightarrow$). Suppose there is some $ z \in Z $ such that, for all $ t \in \{ 1 , \ldots , T \} $, $ X_t $ does not intersect the minimal set of $ ( \psi^{ -1 } ( z ) , h|_{ \psi^{ -1 } ( z ) } ) $. Since $ \bigcup_{ n \in \bZ } h^n ( X_t ) $ is an $ h $-invariant open set that doesn't contain $ z $, it hence doesn't contain $ \overline{ \orb ( z ) } $. Since this is true for all $ t \in \{ 1 , \ldots , T \} $, this contradicts Proposition \ref{propPropertiesOfX_t}(\ref{propPropertiesOfX_t(b)}). 

($\Leftarrow$). Let $ z \in Z $ and let $ t \in \{ 1 , \ldots , T \} $ satisfy $ X_t \cap \psi^{ -1 } ( z ) \neq \varnothing $. By our assumptions, $ X_t \cap \psi^{ -1 } ( z ) $ is a compact open subset of $ \psi^{ - 1 } ( z ) $ intersecting the minimal set of $ ( \psi^{ -1 } ( z ) , h|_{ \psi^{ -1 } ( z ) } ) $. By Proposition \ref{propEssentiallyMinimalReturnTimeMap}, $ \lambda_{ X_t \cap \psi^{ -1 } ( z ) } ( x ) < \infty $ for all $ x \in X_t \cap \psi^{ - 1 } ( z ) $. Since this holds for all $ z \in Z \cap X_t $, it follows that $ \lambda_{ X_t } ( x ) < \infty $ for all $ x \in X_t $. By Proposition \ref{propLambdaUContinuous}, $ \lambda_{ X_t } $ is continuous, and so $ \ran ( \lambda_{ X_t } ) $ is a finite subset of $ \bZ_{ > 0 } $. Write $ \ran ( \lambda_{ X_t } ) = \{ J_{ t , 1 } , \ldots J_{ t , K_t } \} $ and, for each $ k \in \{ 1 , \ldots , K_t \} $,  define $ Y_{ t , k } = \lambda_{ X_t }^{ -1 } ( J_{ t , k } ) $.

We now claim that $ \systembasic $ is a system of finite first return time maps subordinate to $ \sP $ by checking the conditions of Definition \ref{defnSystemOfFiniteReturnTimeMaps}. That (\ref{defnSystemOfFiniteReturnTimeMaps(a)}), (\ref{defnSystemOfFiniteReturnTimeMaps(b)}), and (\ref{defnSystemOfFiniteReturnTimeMaps(c)}) are satisfied is clear. Condition (\ref{defnSystemOfFiniteReturnTimeMaps(d)}) is satisfied due to the continuity of $ \lambda_{ X_t } $ for each $ t \in \{ 1 , \ldots , T \} $. Condition (\ref{defnSystemOfFiniteReturnTimeMaps(e)}) is satisfied due to Proposition \ref{propEssentiallyMinimalReturnTimeMap}(\ref{propEssentiallyMinimalReturnTimeMap(c)}). Now, let $ x \in X $. By assumption, there is precisely one $ t \in \{ 1 , \ldots T \} $ such that $ X_t \cap \psi^{ -1 } ( \psi ( x ) ) \neq \varnothing $. By Proposition \ref{propEssentiallyMinimalReturnTimeMap}(\ref{propEssentiallyMinimalReturnTimeMap(d)}), $ x \in \bigcup_{ n \in \bZ_{ > 0 } } h^n ( X_t \cap \psi^{ -1 } ( \psi ( x ) ) ) $. Let $ j \in \bZ_{ > 0 } $ be the smallest nonzero positive integer such that $ x \in h^j ( X_t ) $. Let $ k \in \{ 1 , \ldots , K_t \} $ satisfy $ x \in h^j ( Y_{ t , k } ) $. Then since $ j $ was chosen to be minimal, $ x \notin h^l ( X_t ) $ for all $ l \in \{ 0 , \ldots , j - 1 \} $, and so we must have $ j \in \{ 0 , \ldots , J_{ t , k } - 1 \} $. Suppose $ k' \in \{ 1 , \ldots , K_t \} $ and $ j' \in \{ 0 , \ldots , J_{ t , k' } - 1 \} $ are such that $ x \in h^{ j' } ( Y_{ t , k' } ) $. We have $ h^{ -j } ( x )  \in X_t $, $ h^{ J_{ t , k } - j } ( x ) \in X_t $, and $ h^{ -j + l } ( x ) \notin X_t $ for all $ l \in \{ 1 , \ldots , J_{ t , k } - 1 \} $, and so this means $ j' = j $ and hence $ k' = k $. Thus, condition (\ref{defnSystemOfFiniteReturnTimeMaps(f)}) is satisfied. This proves the claim and consequently proves the lemma.
\end{proof}

Lemmas \ref{lemmaIteratesAreContainedInThePartition} through \ref{lemmaTheLastLemma} build up the type of system we want to use in the proof of Theorem \ref{thmMainTheorem}. They are split up as separate lemmas to break what would be an enormous proof into multiple smaller steps. 

Note that in the case where $ ( X , h ) $ has no periodic points, the proofs of Lemmas \ref{lemmaOnlyOneYtkIntersectsZ} through \ref{lemmaTheLastLemma} are greatly simplified. Aside from the small amount of work to prove conclusion (\ref{lemmaTheLastLemma(a)}), Lemma \ref{lemmaTheLastLemma} is implied by Proposition  \ref{propAperiodicGivesLargeReturnTime} and Proposition \ref{propReturnTimeMapsGiveFinerPartitions}.

\begin{lemma}\label{lemmaIteratesAreContainedInThePartition}
Let $(X,h)$ be a fiberwise essentially minimal zero-dimensional system, let $N \in \bZ_{>0}$, let $\sP$ be a partition of $X$, and let $ Z $ and $ \psi $ be as in Definition \ref{defnFiberwiseEssentiallyMinimal}. Then there is a system $ \systembasic $ of finite first return time maps subordinate to $\sP$ such that:
\begin{enumerate}[(a)]
\item \label{lemmaIteratesAreContainedInThePartition(a)} For all $ t \in \{ 1 , \ldots , T \} $, we have $ \psi ( X_t ) \subset X_t $.
\item \label{lemmaIteratesAreContainedInThePartition(b)} For all $ t \in \{ 1 , \ldots , T \} $ and all $ n \in \{ 0 , \ldots , N - 1 \} $, $ h^n ( X_t ) $ is contained in an element of $ \sP $.
\item \label{lemmaIteratesAreContainedInThePartition(c)} The partitions $ \sP_1 ( \sS ) $ and $ \sP_2 ( \sS ) $ are finer than $ \sP $.
\end{enumerate}
\end{lemma}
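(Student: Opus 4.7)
The plan is to build the system in three stages: refine the partition to control iterates, construct bases adapted to $Z$, then refine the system to obtain condition (c).

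For the first stage, writing $\sP = \{U_1, \ldots, U_R\}$, I would define $\sP'$ to be the collection of all nonempty intersections of the form $U_{i_0} \cap h^{-1}(U_{i_1}) \cap \cdots \cap h^{-(N-1)}(U_{i_{N-1}})$ with $(i_0, \ldots, i_{N-1}) \in \{1, \ldots, R\}^N$. This is a partition of $X$ refining $\sP$, and by construction every $V \in \sP'$ satisfies $h^n(V) \subset U_{i_n}$ for each $n \in \{0, \ldots, N-1\}$.

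For the second stage, I would mimic the forward direction of Theorem \ref{thmFiberwiseIffAdmitsPartitions} applied to $\sP'$: enumerate the elements of $\sP'$ that meet $Z$ as $V_1, \ldots, V_T$, and for each $t$ set
\[
X_t = \psi^{-1}(V_t \cap Z) \cap V_t.
\]
Because $\psi$ is continuous, $Z$ is closed, and $V_t \cap Z$ is clopen in $Z$, the set $X_t$ is compact and open. A direct computation shows $\psi(X_t) = V_t \cap Z \subset X_t$, which is condition (a). Since $\sP'$ partitions $X$, each $z \in Z$ lies in a unique $V_t$, hence in the unique base $X_t$, and $z \in X_t \cap \psi^{-1}(z)$ belongs to the minimal set of $(\psi^{-1}(z), h|_{\psi^{-1}(z)})$ by Definition \ref{defnFiberwiseEssentiallyMinimal}(\ref{defnFiberwiseEssentiallyMinimal(c)}). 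By Lemma \ref{lemmaWhereTheBasesMustBe}, the bases $X_1, \ldots, X_T$ extend to a system $\sS'$ of finite first return time maps subordinate to $\sP'$, and therefore to $\sP$. Condition (b) then follows from $X_t \subset V_t \in \sP'$.

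For the third stage, I would invoke Proposition \ref{propReturnTimeMapsGiveFinerPartitions} to refine $\sS'$ to a system $\sS$ with the same bases such that $\sP_1(\sS)$ and $\sP_2(\sS)$ are finer than $\sP$. Because the bases are unchanged, conditions (a) and (b) are preserved, and (c) is now achieved. The main conceptual hurdle is that naively refining the bases typically destroys condition (a), since shrinking $X_t$ will generally cut off part of its image under $\psi$. The definition $X_t = \psi^{-1}(V_t \cap Z) \cap V_t$ sidesteps this by making each base saturated over its own image in $Z$, so that (a) holds by design. The refinement in stage three is harmless because Proposition \ref{propReturnTimeMapsGiveFinerPartitions} only subdivides the sets $Y_{t,k}$ and leaves the bases untouched.
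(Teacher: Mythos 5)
Your proof is correct and follows essentially the same strategy as the paper's: refine $\sP$ so that elements have controlled iterates, build bases saturated over $Z$ via $X_t = \psi^{-1}(V_t \cap Z) \cap V_t$, invoke Lemma \ref{lemmaWhereTheBasesMustBe}, and then apply Proposition \ref{propReturnTimeMapsGiveFinerPartitions}. The only cosmetic difference is that you build the iterate-controlled refinement $\sP'$ globally on $X$ in one step, whereas the paper first refines only the $\sP$-elements meeting $Z$ and then extends this to a full partition of $X$; the resulting bases are the same in both cases.
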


\begin{proof}
Let $ X_1', \ldots, X_{ T' }' $ be the elements of $ \sP $ that have nonempty intersection with $ Z $. Let $ t \in \{ 1 , \ldots , T' \} $ and write $ \sP = \{ U_1 , \ldots , U_R \} $. We claim that 
\[
\widetilde{ \sP }_t = \left\{ \bigcap_{ n = 0 }^{ N - 1 } ( X_t'  \cap h^{ -n } ( U_{ r_n } ) ) \, \Bigg| \, r_n \in \{ 1 , \ldots , R \} \text{ for } n \in \{ 1 , \ldots , N - 1 \} ; \bigcap_{ n = 0 }^{ N - 1 } ( X_t' \cap h^{ -n } ( U_{ r_n } ) ) \neq \varnothing \right\}
\] 
is a partition of $ X_t' $. Clearly $ \widetilde{ \sP }_t$ is a finite set and all elements of $ \widetilde{ \sP }_t $ are compact open subsets of $ X $. Each element of $ \widetilde{ \sP }_t $ is also contained in $ X_t' $ since $ X_t' \cap h^{ - n }  ( U_r ) \subset X_t' $ for all $ n \in \{ 0 , \ldots , N - 1 \} $ and all $ r \in \{ 1 , \ldots , R \} $. 

What is left to show is that each element of $ X_t' $ is in an element of an element $ \widetilde{ \sP }_t $ and that the elements of $ \widetilde{ \sP }_t $ are pairwise disjoint. Let $ x \in X_t' $. For each $ n \in \{ 1 , \ldots , N - 1 \} $, choose $ r_n \in \{ 1 , \ldots , R \} $ such that $ h^n ( x ) \in U_{ r_n } $. Then 
\[
x \in \bigcap_{ n = 1 }^{ N - 1 }  ( X_t' \cap h^{ - n }  ( U_{ r_n } ) ) .
\]
So $ X_t $ is the union of all elements of $ \widetilde{ \sP }_t $. Now, for each $ n \in \{ 1 , \ldots ,  N - 1 \} $, choose $ r_n' \in \{ 1 , \ldots , R \} $. If 
\[
x \in \bigcap_{ n = 1 }^{ N - 1 } ( X_t' \cap h^{ - n }  ( U_{ r_n' } ) ) ,
\]
then it must be the case that $ h ( x ) \in U_{r_1}$ and $ h ( x ) \in U_{ r_1' } $, but since $ \sP $ is a partition of $ X $, this must mean that $ r_1 = r_1' $. We can repeat this process for $ h^2 ( x ) , \ldots , h^{ N - 1 } ( x ) $, showing that $ r_n = r_n' $ for all $ n \in \{ 1 , \ldots , N - 1 \} $. Thus, elements of $ \widetilde{ \sP }_t $ are pairwise disjoint, so $ \widetilde{ \sP }_t $ is indeed a partition of $ X_t $. 

Let $ \widetilde{ \sP } $ be a partition of $ X $ that contains all elements of $ \widetilde{ \sP }_t $ for all $ t \in \{ 1 , \ldots , T \} $ and is finer than $ \sP $. Let $ X_1\dprime , \ldots , X_T\dprime $ be the elements of $ \widetilde{ \sP } $ that have nonempty intersection with $ Z $. For each $ t \in \{ 1 , \ldots , T \} $, define $ X_t = X_t\dprime \cap \psi^{ -1 } ( X_t\dprime \cap Z ) $, so by construction $ \psi ( X_t ) \subset X_t $. Thus, $ X_1 , \ldots , X_T $ satisfy the hypotheses of Lemma \ref{lemmaWhereTheBasesMustBe}, and so there is a system $ \systembasic $ of finite first return time maps subordinate to $ \sP $, which naturally satisfies conclusion (\ref{lemmaIteratesAreContainedInThePartition(a)}) of this lemma. To see that conclusion (\ref{lemmaIteratesAreContainedInThePartition(b)}) of the lemma is satisfied, for each $ t \in \{ 1 , \ldots , T \} $, there exists $ s \in \{ 1 , \ldots , T' \} $ such that $ X_t $ is contained in an element of $ \widetilde{ \sP }_s $, and therefore for every $ n \in \{ 0 , \ldots , N \} $, $ h^n ( X_t ) $ is contained in an element of $ \sP $. By applying Proposition \ref{propReturnTimeMapsGiveFinerPartitions}, we may assume that conclusion (\ref{lemmaIteratesAreContainedInThePartition(c)}) holds.
\end{proof}

We improve upon conclusion (\ref{lemmaIteratesAreContainedInThePartition(a)}) of Lemma \ref{lemmaIteratesAreContainedInThePartition} in the following lemma. We add conclusion (\ref{lemmaOnlyOneYtkIntersectsZ(d)}) for ease of use of this lemma in the proof of Lemma \ref{lemmaAllJtkLargerThanNExceptJt1}, and conclusion (\ref{lemmaOnlyOneYtkIntersectsZ(d)}) is a natural result of our method of proof anyway.

To have some intuition as to what is going on in the following three lemmas, $ Y_{ t , 1 } $ should be thought of as the sole subset of $ X_t $ that intersects the periodic points of $ \psi^{ -1 } ( \psi ( X_t ) ) $ with period less than or equal to $ N $ (if such periodic points exist). Having only one piece of the base that contains periodic points of a small period is of technical relevance in the proof of Theorem \ref{thmMainTheorem}.

\begin{lemma}\label{lemmaOnlyOneYtkIntersectsZ}
Let $ ( X , h ) $ be a fiberwise essentially minimal zero-dimensional system, let $ \sP $ be a partition of $ X $, and let $ Z $ and $ \psi $ be as in Definition \ref{defnFiberwiseEssentiallyMinimal}. Then there is a system 
\[
\systembasic 
\]
such that:
\begin{enumerate}[(a)]
\item \label{lemmaOnlyOneYtkIntersectsZ(a)} For all $ t \in \{ 1 , \ldots , T \} $, we have $ \psi ( X_t ) \subset Y_{ t , 1 } $.
\item \label{lemmaOnlyOneYtkIntersectsZ(b)} For all $ t \in \{ 1 , \ldots , T \} $ and all $ n \in \{ 0 , \ldots , N - 1 \} $, $ h^n ( X_t ) $ is contained in an element of $ \sP $.
\item \label{lemmaOnlyOneYtkIntersectsZ(c)} The partitions $ \sP_1 ( \sS ) $ and $ \sP_2 ( \sS ) $ are finer than $ \sP $.
\item \label{lemmaOnlyOneYtkIntersectsZ(d)} If $ \systemarg { \prime } $ is a system of finite first return time maps subordinate to $ \sP $ that satisfies the conclusions of Lemma \ref{lemmaIteratesAreContainedInThePartition}, then we can choose $ \sS $ so that $ \bigsqcup_{ t = 1 }^T X_t = \bigsqcup_{ t = 1 }^T X_t' $.
\end{enumerate}
\end{lemma}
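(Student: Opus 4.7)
The plan is to start from a system $\systemarg{\prime}$ satisfying the conclusions of Lemma~\ref{lemmaIteratesAreContainedInThePartition} (so that conclusion~(d) can be met by merely subdividing the bases of $\sS'$), and then surgically split each base $X_t'$ so that all points of $Z$ lying in any new base share a common first return time; this common time will serve as $J_{t,1}$ and its level set will serve as $Y_{t,1}$, securing conclusion~(a).

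Concretely, for each $t \in \{1, \ldots, T'\}$ set $S_t = \{k : Y_{t,k}' \cap Z \neq \varnothing\}$ and, for $k \in S_t$, set $Z_{t,k}' = Y_{t,k}' \cap Z$ and $\widehat{X}_{t,k} = \psi^{-1}(Z_{t,k}') \cap X_t'$. Since $\psi(X_t') \subset X_t' \cap Z = \bigsqcup_{k \in S_t} Z_{t,k}'$ by Lemma~\ref{lemmaIteratesAreContainedInThePartition}(\ref{lemmaIteratesAreContainedInThePartition(a)}), the sets $\widehat{X}_{t,k}$ partition $\bigsqcup_t X_t'$, securing conclusion~(d). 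Using $\psi \circ h = \psi$, one checks that for $z \in Z_{t,k}'$ the first return time to $\widehat{X}_{t,k}$ is still $J_{t,k}'$, and in fact $Y_{t,k}' \cap \widehat{X}_{t,k} = \lambda_{\widehat{X}_{t,k}}^{-1}(J_{t,k}')$. By Lemma~\ref{lemmaWhereTheBasesMustBe} the collection $\{\widehat{X}_{t,k}\}$ forms valid bases of a system subordinate to $\sP$; in this system declare $\widehat{Y}_{(t,k),1} := Y_{t,k}' \cap \widehat{X}_{t,k}$ (which then contains $\psi(\widehat{X}_{t,k}) = Z_{t,k}'$), and partition the complement $\widehat{X}_{t,k} \setminus \widehat{Y}_{(t,k),1}$ by first return time to $\widehat{X}_{t,k}$ to produce the remaining $\widehat{Y}_{(t,k),l}$. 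Conclusion~(b) is immediate from $\widehat{X}_{t,k} \subset X_t'$ combined with Lemma~\ref{lemmaIteratesAreContainedInThePartition}(\ref{lemmaIteratesAreContainedInThePartition(b)}).

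For conclusion~(c), apply Proposition~\ref{propReturnTimeMapsGiveFinerPartitions} with $\sP' = \sP$ to refine the system above to one whose associated partitions are finer than $\sP$. The main obstacle is to ensure that this refinement does not subdivide the distinguished $\widehat{Y}_{(t,k),1}$, since otherwise the ``position $1$'' property needed for~(a) would be scrambled by the reindexing. This is where our choice pays off: since $\widehat{Y}_{(t,k),1} \subset Y_{t,k}'$ and Lemma~\ref{lemmaIteratesAreContainedInThePartition}(\ref{lemmaIteratesAreContainedInThePartition(c)}) tells us that each iterate $h^j(Y_{t,k}')$ for $j \in \{0, \ldots, J_{t,k}'-1\}$ already lies in a single element of $\sP$ (and $h^{J_{t,k}'}(\widehat{Y}_{(t,k),1}) \subset \widehat{X}_{t,k} \subset X_t'$ lies in a single element of $\sP$ as well), the auxiliary set $A_{t,k}$ appearing in the proof of Proposition~\ref{propReturnTimeMapsGiveFinerPartitions} reduces to the singleton $\{\widehat{Y}_{(t,k),1}\}$, so the refinement leaves $\widehat{Y}_{(t,k),1}$ intact; the formula $k' = \sum_{l<k} M_{t,l} + m$ with $m = 1$ then keeps it in slot $1$. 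All four conclusions therefore hold simultaneously.
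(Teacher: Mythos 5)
Your proof is correct in substance and follows the same skeleton as the paper's: both start from an $\sS'$ satisfying Lemma~\ref{lemmaIteratesAreContainedInThePartition}, split each base $X_t'$ into pieces $\psi^{-1}(Y_{t,k}' \cap Z) \cap X_t'$ indexed by which old $Y$-piece meets $Z$, and then verify the conclusions. The one genuine divergence is in how the remaining $Y$-pieces of the new system are defined, and it costs you an extra step. The paper sets \emph{every} new piece to be an intersection of an old piece with the new base, i.e.\ $Y_{t,k} = Y_{s,b(t,k)}' \cap X_t$; since each new piece then lives inside a single old piece, Lemma~\ref{lemmaConditionForFinerPartitions} and Lemma~\ref{lemmaPartition1FinerIffPartition2Finer} immediately hand over conclusion~(c) with no further refinement. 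You only do this for the distinguished first piece, and partition the rest of $\widehat{X}_{t,k}$ by first return time; a return-time level set can merge points coming from several old $Y_{t,l}'$s with a common $J$-value, so your $\sP_1(\widehat{\sS})$ need not be finer than $\sP$, and you must call Proposition~\ref{propReturnTimeMapsGiveFinerPartitions} and then argue the refinement cannot scramble slot~1. That argument (showing that the auxiliary set $A_{(t,k),1}$ is a singleton, since every iterate $h^j(\widehat{Y}_{(t,k),1})$ for $j \in \{0,\ldots,J_{t,k}'\}$ already sits in one element of $\sP$) is correct, so the detour is harmless, but it is avoidable: intersecting all the old $Y$-pieces with the new base gives everything at once.

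Two small inaccuracies worth flagging. First, your assertion that $Y_{t,k}' \cap \widehat{X}_{t,k} = \lambda_{\widehat{X}_{t,k}}^{-1}(J_{t,k}')$ is false in general: a point of $Y_{t,l}' \cap \widehat{X}_{t,k}$ with $l \neq k$ and $J_{t,l}' = J_{t,k}'$ also has return time $J_{t,k}'$ to $\widehat{X}_{t,k}$, so the containment is only $\subset$. You never actually use the reverse inclusion (you define $\widehat{Y}_{(t,k),1}$ directly as $Y_{t,k}' \cap \widehat{X}_{t,k}$), so this does not damage the proof, but the claim should be softened. Second, your appeal to Lemma~\ref{lemmaWhereTheBasesMustBe} produces a system with $Y$-pieces defined purely by return time; you then overwrite these by hand, so you are really only using that lemma to guarantee finiteness of the return time and validity of the bases — it would be cleaner to state that explicitly rather than to invoke the lemma and then modify its output.
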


\begin{proof}
Let $ \systemarg{ \prime } $ be a system of finite first return time maps subordinate to $ \sP $ satisfying the conclusions of Lemma \ref{lemmaIteratesAreContainedInThePartition}. For each $ s \in \{ 1 , \ldots , T' \} $, let $ A_s = \{ a ( s , 1 ) , \ldots , a ( s , N_s ) \} $ be the set of all $ k \in \{ 1 , \ldots , K_s' \} $ such that $ Y_{ s , k }' \cap Z \neq \varnothing $. Set $ T = \sum_{ s = 1 }^{ T' } N_s $. 

Let $ t \in \{ 1 , \ldots , T \} $. There is some $ s \in \{ 1 , \ldots , T' \} $ and some $ n \in \{ 1 , \ldots , N_s \} $ such that $ t = \sum_{ r = 1 }^{ s - 1 } N_r + n $. Define $ X_t = \psi^{ -1 } ( Y_{ s , a ( s , n ) }' \cap Z ) \cap X_s $. Let $ B_t = \{ b ( t , 1 ) , \ldots , b ( t , K_t ) \} $ be the set of all $ k \in \{ 1 , \ldots , K_s' \} $ such that $ Y_{ s , k }' \cap X_t \neq \varnothing $, taking $ b ( t , 1 ) = a ( s , n ) $. For each $ k \in \{ 1 , \ldots , K_t \} $, set $ Y_{ t , k } = Y_{ s , b ( t , k ) }' \cap X_t $ and set $ J_{ t , k } = J_{ s , b ( t , k ) }' $. 

We now show that $ \systembasic $ is a system of finite first return time maps subordinate to $ \sP $ by verifying the conditions of Definition \ref{defnSystemOfFiniteReturnTimeMaps}. It is clear that conditions (\ref{defnSystemOfFiniteReturnTimeMaps(a)}) and (\ref{defnSystemOfFiniteReturnTimeMaps(c)}) are satisfied. Let $ t \in \{ 1 , \ldots , T \} $ and let $ s \in \{ 1 , \ldots , T' \} $ and $ n \in \{ 1 , \ldots , N_s \} $ satisfy $ t = \sum_{ r = 1 }^{ s - 1 } N_r + n $. Notice that $ Y_{ s , a ( s , n ) }' \cap Z $ is compact and open in $ Z $, and so by the continuity of $ \psi $, $ \psi^{ -1 } ( Y_{ s , a ( s , n ) }' \cap Z ) $ is compact and open in $ X $. Thus, $ X_t $ is compact and open. Since $ X_s' $ is contained in an element of $ \sP $, so is $ X_t $. Thus, condition (\ref{defnSystemOfFiniteReturnTimeMaps(b)}) is satisfied. For each $ k \in \{ 1 , \ldots , K_t \} $, it is clear that $ Y_{ t , k } $ is a compact open subset of $ X_t $, since it is the intersection of two compact open subsets of $ X $. Moreover, it is nonempty by construction. Now, notice that if $ k \in \{ 1 , \ldots , K_s' \} \setminus B_t $, we have $ Y_{ s , k }' \cap X_t = \varnothing $, and using this fact at the second step below, we have
\begin{align*}
\bigsqcup_{ k = 1 }^{ K_t } Y_{ t , k } &= \bigsqcup_{ k = 1 }^{ K_t } Y_{ s , b ( t , k ) }' \cap X_t  \\
&= \bigsqcup_{ k = 1 }^{ K_s' } Y_{ s , k }' \cap X_t \\
&= X_s' \cap X_t \\
&= X_t.
\end{align*}
Thus, condition (\ref{defnSystemOfFiniteReturnTimeMaps(d)}) holds. For all $ k \in \{ 1 , \ldots , K_s' \} \setminus B_t $, since $ \psi^{ -1 } ( Y_{ s , a ( s , n ) }' \cap Z )$ is $ h $-invariant and $ Y_{ s , k }' \cap \psi^{ -1 } ( Y_{ s , a ( s , n ) }' \cap Z ) = \varnothing $, we have $ h^{ J_{ s , k }' } ( Y_{ s , k }' ) \cap X_t = \varnothing $. Thus, by the same logic as above, we have 
\begin{align*}
\bigsqcup_{ k = 1 }^{ K_t } h^{ J_{ t , k } } ( Y_{ t , k } ) &= X_t.
\end{align*}
Thus, condition (\ref{defnSystemOfFiniteReturnTimeMaps(e)}) holds. 

It still remains to verify that condition (\ref{defnSystemOfFiniteReturnTimeMaps(f)}) holds. Let $ x \in X $. There is precisely one $ s \in \{ 1 , \ldots , T' \} $, one $ k \in \{ 1 , \ldots , K_s' \}$, and one $ j \in \{ 0 , \ldots , J_{ s , k }' - 1 \} $ such that $ x \in h^j ( Y_{ s , k } ) $. Observe that 
\begin{equation}\label{eq: Intersection of Y_r , a ( r , n ) and Z is Z }
\bigsqcup_{ r = 1 }^{ T' } \bigsqcup_{ m = 1 }^{ N_r } Y_{ r , a ( r , m ) }' \cap Z = Z .
\end{equation}
Thus, there is exactly one $ n \in \{ 1 , \ldots , N_s \} $ such that $ x \in \psi^{ -1 } ( Y_{ s , a ( s , n ) }' \cap Z ) $. Let $ t = \sum_{ r = 1 }^{ s - 1 } N_r + n $. We can now see that there is exactly one $ k \in B_t $ such that $ h^{ -j } ( x ) \in Y_{ t , k } $. Since $ J_{ t , k } = J_{ s , a ( s , n ) } $, we have $ j \in \{ 0 , \ldots , J_{ t , k } - 1 \}$. Thus, condition (\ref{defnSystemOfFiniteReturnTimeMaps(f)}) holds.

We now show that $ \sS $ satisfies the conclusions of the lemma. Let $ t \in \{ 1 , \ldots , T \}$ and let $ s \in \{ 1 , \ldots , T' \} $ and $ n \in \{ 1 , \ldots , N_s \} $ satisfy $ t = \sum_{ r = 1 }^{ s - 1 } N_r + n $. Since $ Y_{ t , 1 } = Y_{ s , a ( s , n ) }' \cap X_t $, it follows that $ Y_{ t , 1 } \cap Z \neq \varnothing $. Let $ k \in \{ 2 , \ldots , K_t \} $. Then there is no $ n \in \{ 1 , \ldots , N_s \} $ such that $ Y_{ s , b ( t , k ) }' = Y_{ s , a ( s , n ) }' $. Thus, by (\ref{eq: Intersection of Y_r , a ( r , n ) and Z is Z }), we have $ Y_{ t , k } \cap Z = \varnothing $. Thus, $ \sS $ satisfies conclusion (\ref{lemmaOnlyOneYtkIntersectsZ(a)}) of this lemma. Since $ X_t \subset X_s' $ and since $ \sS' $ satisfies Lemma \ref{lemmaIteratesAreContainedInThePartition}(\ref{lemmaIteratesAreContainedInThePartition(b)}), $ \sS $ satisfies conclusion (\ref{lemmaOnlyOneYtkIntersectsZ(b)}) of this lemma.

It is clear that $ \bigsqcup_{ t = 1 }^{ T' } X_t' \subset \bigsqcup_{ t = 1 }^T X_t $. Since $ \bigsqcup_{ t = 1 }^T \bigsqcup_{ n = 1 }^{ N_t } Y_{ t , a ( t , n )  }' \cap Z = Z $, we have 
\[
 \bigsqcup_{ t = 1 }^T \bigsqcup_{ n = 1 }^{ N_t } \psi^{ -1 } ( Y_{ t , a ( t , n ) } \cap Z ) = X .
\]
 Hence, 
\begin{equation}\label{eq: Sum of X_t' = Sum of X_t }
\bigsqcup_{ t = 1 }^{ T' } X_t' = \bigsqcup_{ t = 1 }^T X_t.
\end{equation}
This shows that $ \sS $ satisfies conclusion (\ref{lemmaOnlyOneYtkIntersectsZ(d)}) of this lemma. Now, note that by construction, for every $ t \in \{ 1 , \ldots , T \} $ and every $ k \in \{ 1 , \ldots , K_t \} $,  there is a $ s \in \{ 1 , \ldots , T' \} $ and a $ l \in \{ 1 , \ldots , K_s' \} $ such that $ Y_{ t , k } \subset Y_{ s , l }' $. Thus, because of (\ref{eq: Sum of X_t' = Sum of X_t }), we can apply Lemma \ref{lemmaConditionForFinerPartitions} to see  $ \sP_1 ( \sS ) $ is finer than $ \sP_1 ( \sS' ) $, which is finer than $ \sP $ since $ \sS' $ satisfies Lemma \ref{lemmaIteratesAreContainedInThePartition}(\ref{lemmaIteratesAreContainedInThePartition(c)}). By Lemma \ref{lemmaPartition1FinerIffPartition2Finer}, we see that $ \sP_2 ( \sS' ) $ is finer than $ \sP_2 ( \sS ) $, which is also finer than $ \sP $ by  Lemma \ref{lemmaIteratesAreContainedInThePartition}(\ref{lemmaIteratesAreContainedInThePartition(c)}). Thus, $ \sS $ satisfies conclusion (\ref{lemmaOnlyOneYtkIntersectsZ(c)}) of the lemma. This completes the proof.
\end{proof}

In the following lemma, we add a new property for $ \sS $ on top of the properties of $ \sS $ from Lemma \ref{lemmaOnlyOneYtkIntersectsZ}.
 
\begin{lemma}\label{lemmaAllJtkLargerThanNExceptJt1}
Let $ ( X , h ) $ be a fiberwise essentially minimal zero-dimensional system, let $ \sP $ be a partition of $ X $, let $ N , N' \in \bZ_{ \geq 0 } $ and let $ Z $ and $ \psi $ be as in Definition \ref{defnFiberwiseEssentiallyMinimal}. Then there is a system $ \systembasic $ of finite first return time maps subordinate to $ \sP $ such that:
\begin{enumerate}[(a)]
\item \label{lemmaAllJtkLargerThanNExceptJt1(a)} For all $ t \in \{ 1 , \ldots , T \} $, we have $ \psi ( X_t ) \subset Y_{ t , 1 } $.
\item \label{lemmaAllJtkLargerThanNExceptJt1(b)} For all $ t \in \{ 1 , \ldots , T \} $ and all $ n \in \{ 0 , \ldots , N \} $, $ h^n ( X_t ) $ is contained in an element of $ \sP $.
\item \label{lemmaAllJtkLargerThanNExceptJt1(c)} The partitions $ \sP_1 ( \sS ) $ and $ \sP_2 ( \sS ) $ are finer than $ \sP $.
\item \label{lemmaAllJtkLargerThanNExceptJt1(d)} For all $ t \in \{ 1 , \ldots , T \} $ and all $ k \in \{ 2 , \ldots , K_t \} $, we have $ J_{ t , k } > N' $. 
\end{enumerate}
\end{lemma}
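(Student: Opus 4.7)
The plan is to first apply Lemma \ref{lemmaOnlyOneYtkIntersectsZ} with the same $\sP$ and $N$ to obtain a system $\sS'$ satisfying that lemma's conclusions (a)--(c); what remains is to arrange condition (d), namely $J_{t,k} > N'$ for $k \geq 2$. The guiding observation is that shrinking a base ($X_t \subset X_t'$) only increases first-return times, so the strategy is to subdivide each $X_t'$ into smaller compact open bases. The pieces of the old $Y_{t,k}'$ with $k \geq 2$ (those disjoint from $Z$) should, after subdivision, have their first-return times pushed above $N'$; the points coming from $\psi(X_t') \subset Z$ may force small returns arising from periodic orbits of short period, but these we allow to remain in the new $Y_{t,1}$.

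Concretely, for each $t$ and each $z \in \psi(X_t') \subset Z$, the system $(\psi^{-1}(z), h|_{\psi^{-1}(z)})$ is essentially minimal with $z$ in its minimal set. If that minimal set has no periodic points then $z$ is aperiodic, and continuity of $h$ gives a compact open neighborhood $V_z \subset X_t'$ of $z$ in $X$ for which $V_z, h(V_z), \ldots, h^{N'}(V_z)$ are pairwise disjoint, so $\lambda_{V_z}(x) > N'$ for all $x \in V_z$. Otherwise the minimal set is the periodic orbit of $z$ of some period $p_z$, and $V_z$ may be chosen small enough that inside $V_z$ the only first-return value $\leq N'$ is $p_z$, attained exactly on the intersection of $V_z$ with this orbit. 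Using compactness of $\psi(X_t')$ as a closed subset of $Z$, I would extract a finite subcover of the $\{V_z\}_{z \in \psi(X_t')}$ and take pairwise disjoint compact open intersections to produce new candidate bases $X_1, \ldots, X_T$.

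Applying Lemma \ref{lemmaWhereTheBasesMustBe} to the family $X_1, \ldots, X_T$ yields a system $\sS$ of finite first-return time maps subordinate to $\sP$. By construction, each new base $X_t$ has $\psi(X_t) \subset Y_{t,1}$, the ``short return'' piece containing the $Z$-points, while every $Y_{t,k}$ with $k \geq 2$ carries return time exceeding $N'$ thanks to the disjointness arranged above. A final application of Proposition \ref{propReturnTimeMapsGiveFinerPartitions} then refines $\sP_1(\sS)$ and $\sP_2(\sS)$ to lie below $\sP$ without altering the bases; since that refinement only splits each $Y_{t,k}$ into smaller pieces sharing the same first-return time, condition (d) and the ``$\psi(X_t) \subset Y_{t,1}$'' requirement are both preserved (after relabelling the piece containing $\psi(X_t)$ as $Y_{t,1}$).

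The main obstacle is the coherent selection of the neighborhoods $V_z$ across all fibers: one must pick them so that the resulting bases form a disjoint compact open family, each contained in an element of $\sP$ with its first $N$ iterates also so contained, and so that every point in each new base---not just in a single fiber---ends up with the advertised return-time behavior. Continuity of $\lambda_{X_t'}$ together with compactness of $Z$ is precisely what makes this bookkeeping go through; the verification that the resulting bases generate a valid system (i.e.\ satisfy Definition \ref{defnSystemOfFiniteReturnTimeMaps}) is then delegated to Lemma \ref{lemmaWhereTheBasesMustBe}, in direct parallel with the proof of Lemma \ref{lemmaOnlyOneYtkIntersectsZ}.
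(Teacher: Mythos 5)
Your plan sidesteps the paper's induction on $N'$ in favor of a direct local construction around each $z \in Z$, which is a genuinely different approach, but as written it has several gaps that are not patched by ``continuity of $\lambda_{X_t'}$ together with compactness of $Z$'' alone.

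First, the local claim for a periodic $z$ of period $p_z$ is harder than ``shrink $V_z$'': shrinking rules out return times $n$ with $h^n(z) \ne z$, but it does not rule out the multiples $2p_z, 3p_z, \ldots \leq N'$, since $h^{kp_z}(z) = z$ for all $k$. You can indeed arrange $\lambda_{V_z}(x) \in \{p_z\} \cup (N', \infty]$, but only by additionally intersecting $V_z$ with $h^{-p_z}(V_z), h^{-2p_z}(V_z), \ldots$ up to $\lfloor N'/p_z\rfloor$ copies (so that $g^p(x)\notin V_z$ forces a long excursion); your proposal asserts the existence of $V_z$ without this argument. Your parenthetical ``attained exactly on the intersection of $V_z$ with this orbit'' is also false in general: aperiodic points in nearby fibers can perfectly well have return time exactly $p_z$. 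Second, forming the new bases as pairwise disjoint Boolean cells of the $V_z$'s does not preserve return times. If $x$ lies in a cell $X_t \subsetneq V_z$ and $h^{p_z}(x) \in V_z \setminus X_t$, then $\lambda_{X_t}(x) > p_z$, and nothing prevents $p_z < \lambda_{X_t}(x) \leq N'$, breaking (d). The paper avoids this by always refining via $\psi$-preimages (sets of the form $V \cap \psi^{-1}(U)$ with $U \subset Z$ clopen); because $\psi \circ h = \psi$, such refinements do not change $\lambda$, whereas your cells are not of this form. Relatedly, a single cell can contain canonical points $z_1, z_2 \in Z$ of different periods $\leq N'$ (the period function on $Z$ is not locally constant), and then $\psi(X_t)$ cannot fit inside one $Y_{t,1}$, so (a) fails. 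Finally, Lemma \ref{lemmaWhereTheBasesMustBe} requires that \emph{exactly one} base per fiber meets the minimal set, which a raw Venn-diagram decomposition of the $V_z$'s will not deliver, and the closing application of Proposition \ref{propReturnTimeMapsGiveFinerPartitions} splits $Y_{t,1}$ into several pieces across which $\psi(X_t)$ may be scattered -- ``relabelling the piece containing $\psi(X_t)$'' presupposes there is one such piece, which is exactly what fails. In the paper one must re-apply Lemma \ref{lemmaOnlyOneYtkIntersectsZ} after refining, not merely relabel. These issues are essentially why the paper proceeds by induction on $N'$, removing one return-time level at a time via $X_t'' = X_t' \setminus \bigsqcup_{k \in B_t} Y_{t,k}'$, rather than attempting a one-shot fiberwise construction.
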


\begin{proof}
Fix $ N \in \bZ_{ \geq 0 } $ for the remainder if the proof. We now prove this lemma by induction, with the base case $ N' = 0 $ proved by Lemma \ref{lemmaOnlyOneYtkIntersectsZ}. So let $ N' \in \bZ_{ > 0 } $ and suppose that $ \systemarg{ \prime } $ satisfies the conclusions of this lemma with $ N' - 1 $ in place of $ N' $. Let $ t \in \{ 1 , \ldots , T' \}$ and let $ B_{ t } $ be the set of all $ k \in \{ 2 , \ldots , K_t' \} $ such that $ J_{ t , k }' = N' $. Define $ X_t^{ \prime \prime } = X_t' \setminus \bigsqcup_{ k \in B_{ t } } Y_{ t ,  k }' $. 

Let $ z \in Z $. Since $ \sS' $ satisfies conclusion (\ref{lemmaAllJtkLargerThanNExceptJt1(a)}) of this lemma, there is a $ t \in \{ 1 , \ldots , T' \} $ such that $ z \in Y_{ t , 1 }' $. Thus, $ z \in X_t\dprime $. Let $ s \in \{ 1 , \ldots , T' \} $ satisfy $ s \neq t $. Then $ \psi^{ -1 } ( z ) \cap X_s' = \varnothing $, and so since $ X_s\dprime \subset X_s' $, we conclude that $ \psi^{ -1 } ( z ) \cap X_s\dprime = \varnothing $. Thus, by applying Lemma \ref{lemmaWhereTheBasesMustBe} with $ T^{ \prime \prime} $ in place of $ T $ and with $ X_1\dprime , \ldots , X_{ T\dprime }\dprime $ in place of $ X_1 , \ldots , X_T $, there is a system $ \systemarg{ \prime \prime } $ of finite first return time maps subordinate to $ \sP $. 

By applying Proposition \ref{propReturnTimeMapsGiveFinerPartitions} with $ \sS\dprime $ in place of $ \sS $ and with $ \sP $ in place of $ \sP' $, we may assume that $ \sP_1 ( \sS\dprime ) $ and $ \sP_2 ( \sS\dprime ) $ are finer than $ \sP $. By applying Lemma \ref{lemmaOnlyOneYtkIntersectsZ}, we may additionally assume that $ \psi ( X_t\dprime ) \subset Y_{ t , 1 }\dprime $ for all $ t \in \{ 1 , \ldots , T \} $. 

For each $ t \in \{ 1 , \ldots , T\dprime \} $, let $ C_t $ be the set of all $ k \in \{ 1 , \ldots , K_t\dprime \} $ such that $ J_{ t , k }\dprime = N' $. Let $ D_1 = \{ a ( 1 ) , \ldots , a ( L_1 ) \} $ be the set of all $ t \in \{ 1 , \ldots , T\dprime \} $ such that $ C_t = \varnothing $. Let $ D_2 = \{ a ( L_1 + 1 ) , \ldots , a ( L_2 ) \} $ be the set of all $ t \in \{ 1 , \ldots , T\dprime \} $ such that $ 1 \in C_t $. Let $ D_3 = \{ a ( L_2 + 1 ) , \ldots , a ( L_3 ) \} $ be the set of all $ t \in \{ 1 , \ldots , T\dprime \} $ such that $ C_t \neq \varnothing $ but $ 1 \notin C_t $. 

Let $ t \in \{ 1 , \ldots , L_1 \} $. Set $ X_t = X_{ a ( t ) }\dprime $ and $ K_t = K_{ a ( t ) }\dprime $, and for each $ k \in \{ 1 , \ldots , K_t \} $, we set $Y_{ t , k } = Y_{ a ( t ) , k }\dprime $ and $ J_{ t , k } = J_{ a ( t ) , k }\dprime $. It is clear that for all $ k \in \{ 2 , \ldots , K_t \} $, we have 
\begin{equation}\label{eq:NewSystemConstruction1-Jtk>N'}
J_{ t , k } > N' .
\end{equation}

Let $ l \in \{ L_1 + 1 , \ldots , L_2 \} $. Then set $ \widetilde{ X }_l = \left( X_{ a ( l ) }\dprime \setminus \bigsqcup_{ k \in C_{ a ( l ) } } Y_{ a ( l ) , k }\dprime \right) \sqcup  Y_{ a ( l ) , 1 }  $.  Define
\[
\sP^{ ( l ) } = \{ U \cap \psi^{ -1 } ( X_{ a ( l ) }\dprime \cap Z ) \setdiv U \in \sP \text{ and } U \cap \psi^{ -1 } ( X_{ a ( l ) }\dprime \cap Z ) \neq \varnothing \}.
\]
Apply Lemma \ref{lemmaOnlyOneYtkIntersectsZ} with $ ( \psi^{ -1 } ( X_l\dprime \cap Z ) , h|_{ \psi^{ -1 } ( X_l\dprime \cap Z ) } ) $ in place of $ ( X , h ) $, $ X_l\dprime \cap Z $ in place of $ Z $, $ \psi|_{ \psi ( X_l\dprime \cap Z ) } $ in place of $ \psi $, and $ \sP^{ ( l ) } $ in place of $ \sP $ to get a system $ \systemarg{ ( l ) \prime } $ of finite first return time maps subordinate to $ \sP^{ ( t ) } $ satisfying the conclusions of the lemma. Let $ t \in \{ 1 , \ldots , T^{ ( l ) \prime } \} $ be such that $ J_{ t , 1 }^{ ( l )\prime } = N' $; in this case, define $ X_t^{ ( l ) \prime \prime } = X_t^{ ( l ) \prime } $. Let $ t \in \{ 1 , \ldots , T^{ ( l ) } \} $ be such that $ J_{ t , 1 }^{ ( l ) } \neq N' $; in this case, let $ C ( t , l ) $ be the set of all $ k \in \{ 1 , \ldots , K_t^{ ( l )\prime } \} $ such that $ J_{ t , k }^{ ( l )\prime } = N' $ and define $ X_t^{ ( l ) \prime \prime } \setminus \left( \bigsqcup_{ k \in C ( t , l ) } Y_{ t , k }^{ ( l )\prime } \right) $. Then apply Lemma \ref{lemmaWhereTheBasesMustBe} to get a system $ \systemarg{ ( l ) \prime \prime } $ of finite first return time maps subordinate to $ \sP^{ ( l ) } $. Since $ \psi ( X_t^{ ( l )\prime } ) \subset X_t^{ ( l )\prime } $, we can apply Lemma \ref{lemmaOnlyOneYtkIntersectsZ} with $ ( \psi^{ -1 } ( X_l\dprime \cap Z ) , h|_{ \psi^{ -1 } ( X_l\dprime \cap Z ) } ) $ in place of $ ( X , h ) $, $ X_l\dprime \cap Z $ in place of $ Z $, $ \psi|_{ \psi ( X_l\dprime \cap Z ) } $ in place of $ \psi $, $ \sP^{ ( l ) } $ in place of $ \sP $, and $ \sS^{ ( l ) \prime \prime } $ in place of $ \sS' $ to get a system $ \systemarg{ ( l ) } $ of finite first return time maps subordinate to $ \sP^{ ( l ) } $ satisfying the conclusions of the lemma. For each $ t \in \{ 1 , \ldots , T^{ ( l ) } \} $, by taking the union of all subsets of $ X_t^{ ( l ) } $ with a return time of $ N' $, we are free to assume that and all $ k \in \{ 2 , \ldots , K_t^{ ( l ) } \} $, we have $ J_{ t , k }^{ ( l ) } > N' $; since $ h^j ( Y_{ t , 1 }^{ ( l ) } ) $ is a subset of $ h^j ( Y_{ s , 1 }\dprime ) $ for all $ j \in \{ 0 , \ldots , N - 1 \} $ and since $ \sP_1 ( \sS\dprime ) $ and $ \sP_2 ( \sS\dprime ) $ are finer than $ \sP $ as well, $ \sS^{ ( l ) } $ will still satisfy conclusion (\ref{lemmaAllJtkLargerThanNExceptJt1(c)}) of Lemma \ref{lemmaIteratesAreContainedInThePartition} (as well as the other conclusions). Let $ s \in \{ 1 , \ldots , T^{ ( l ) } \} $ and set $ t = L_1 + s + \sum_{ r = L_2 + 1 }^{ l - 1 } T^{ ( r ) } $, set $ X_t = X_s^{ ( l ) } $, set $ K_t = K_s^{ ( l ) } $, and for each $ k \in \{ 1 , \ldots , K_t \} $, set $ Y_{ t , k } = Y_{ s , k }^{ ( l ) } $ and set $ J_{ t , k } = J_{ s , k }^{ ( l ) } $. By construction, for all $ k \in \{ 2 , \ldots , K_t \} $, we have
\begin{equation}\label{eq:NewSystemConstruction2-Jtk>N'}
J_{ t , k } > N' . 
\end{equation}

Let $ l \in \{ L_2 + 1 , \ldots , L_3 \} $. Then set $ \widetilde{ X }_l = X_{ a ( l ) }\dprime \setminus \bigsqcup_{ k \in C_{ a ( l ) } } Y_{ a ( l ) , k }\dprime $.  Apply Lemma \ref{lemmaIteratesAreContainedInThePartition} with $ ( \psi^{ -1 } ( X_l\dprime \cap Z ) , h|_{ \psi^{ -1 } ( X_l\dprime \cap Z ) } ) $ in place of $ ( X , h ) $, $ X_l\dprime \cap Z $ in place of $ Z $, $ \psi|_{ \psi ( X_l\dprime \cap Z ) } $ in place of $ \psi $, and $ \sP^{ ( l ) } $ in place of $ \sP $ to get a system $ \systemarg{ ( l ) } $ of finite first return time maps subordinate to $ \sP^{ ( t ) } $ satisfying the conclusions of the lemma. Let $ s \in \{ 1 , \ldots , T^{ ( l ) } \} $ and set $ t = L_2 + s + \sum_{ r = L_1 + 1 }^{ l - 1 } T^{ ( r ) } $, set $ X_t = X_s^{ ( l ) } $, set $ K_t = K_s^{ ( l ) } $, and for each $ k \in \{ 1 , \ldots , K_t \} $, set $ Y_{ t , k } = Y_{ s , k }^{ ( l ) } $ and set $ J_{ t , k } = J_{ s , k }^{ ( l ) } $. For each $ k \in \{ 1 , \ldots , K_t \} $, there is some $ k' \in ( \{ 1 , \ldots , K_{ a ( l ) }\dprime \} \setminus C_{ a ( l ) } ) $ such that $ Y_{ t , k } \subset Y_{ a ( l ) , k' }\dprime $. But since $ J_{ a ( l ) , k' }\dprime > N' $ and since $ X_t \subset X_{ a ( l ) }\dprime $, we have 
\begin{equation}\label{eq:NewSystemConstruction3-Jtk>N'}
J_{ t , k } > N' .
\end{equation} 

Set $ T = L_1 + \sum_{ l = L_1 + 1 }^{ L_3 } T^{ ( l ) } $. We now show that $ \systembasic $ is indeed a system of finite first return time maps subordinate to $ \sP $ by verifying the conditions of Definition \ref{defnSystemOfFiniteReturnTimeMaps}. That conditions (\ref{defnSystemOfFiniteReturnTimeMaps(a)}) and (\ref{defnSystemOfFiniteReturnTimeMaps(c)}) hold is clear. For condition (\ref{defnSystemOfFiniteReturnTimeMaps(b)}), let $ t \in \{ 1 , \ldots , T \} $. It is clear that $ X_t $ is a compact open subset of $ X $. For $ t \in \{ 1 , \ldots , L_1 \} $, we have $ X_t \subset X_{ a ( t ) }\dprime $; for $ t \in \{ L_1 + 1 , \ldots , T \} $, let $ L \in \{ L_1 + 1 , \ldots , L_3 \} $ and let $ s \in \{ 1 , \ldots , T^{ ( L ) } \} $ satisfy $ t = L_2 + \sum_{ l = L_1 + 1 }^{ L - 1 } T^{ ( l ) } + s $, and then we have $ X_t \subset X_{ a ( L ) }\dprime $. Thus, in both cases, $ X_t' $ is contained in an element of $ \sP $. This verifies condition (\ref{defnSystemOfFiniteReturnTimeMaps(b)}). 

Conditions (\ref{defnSystemOfFiniteReturnTimeMaps(d)}) and (\ref{defnSystemOfFiniteReturnTimeMaps(e)}) are immediate from the definitions. For condition (\ref{defnSystemOfFiniteReturnTimeMaps(f)}), we have 
\begin{align*}
\bigsqcup_{ t = 1 }^{ T } \bigsqcup_{ k = 1 }^{ K_t } \bigsqcup_{ j = 0 }^{ J_{ t , k } - 1 } h^j ( Y_{ t , k } ) &= \left( \bigsqcup_{ t = 1 }^{ L_1 } \bigsqcup_{ k = 1 }^{ K_t } \bigsqcup_{ j = 0 }^{ J_{ t , k } - 1 } h^j ( Y_{ t , k } ) \right) \sqcup \left( \bigsqcup_{ t = L_1 + 1 }^{ L_3 } \bigsqcup_{ k = 1 }^{ K_t } \bigsqcup_{ j = 0 }^{ J_{ t , k } - 1 } h^j ( Y_{ t , k } ) \right)  \\
&= \left( \bigsqcup_{ t \in D_1 } \bigsqcup_{ k = 1 }^{ K_t\dprime } \bigsqcup_{ j = 0 }^{ J_{ t , k }\dprime - 1 } h^j ( Y_{ t , k }\dprime ) \right) \sqcup \left( \bigsqcup_{ t \in D_2 \cup D_3 } \bigsqcup_{ k = 1 }^{ K_t\dprime } \bigsqcup_{ j = 0 }^{ J_{ t , k }\dprime - 1 } h^j ( Y_{ t , k }\dprime ) \right) \\
&= \bigsqcup_{ t = 1 }^{ T\dprime } \bigsqcup_{ k = 1 }^{ K_t\dprime } \bigsqcup_{ j = 0 }^{ J_{ t , k }\dprime - 1 } h^j ( Y_{ t , k }\dprime )  \\
&= X.
\end{align*}

We now show that $ \sS $ satisfies the conclusions of the lemma. Conclusions (\ref{lemmaAllJtkLargerThanNExceptJt1(a)}) and (\ref{lemmaAllJtkLargerThanNExceptJt1(c)}) are satisfied by our use of Lemma \ref{lemmaIteratesAreContainedInThePartition} throughout the proof. Conclusion (\ref{lemmaAllJtkLargerThanNExceptJt1(b)}) is satisfied by the fact that $ \sS\dprime $ satisfies this condition and each base of $ \sS $ is contained in a base of $ \sS\dprime $. Conclusion (\ref{lemmaAllJtkLargerThanNExceptJt1(d)}) is satisfied by equations (\ref{eq:NewSystemConstruction1-Jtk>N'}), (\ref{eq:NewSystemConstruction2-Jtk>N'}), and (\ref{eq:NewSystemConstruction3-Jtk>N'}). This completes the proof of the lemma.
\end{proof}

In our final lemma, we improve conclusion (\ref{lemmaAllJtkLargerThanNExceptJt1(d)}) of Lemma \ref{lemmaAllJtkLargerThanNExceptJt1} at the expense of weakening conclusion (\ref{lemmaAllJtkLargerThanNExceptJt1(a)}). Luckily, this weakened conclusion (\ref{lemmaAllJtkLargerThanNExceptJt1(a)}) is still sufficient for our use of this system in the proof of Theorem \ref{thmMainTheorem}.

\begin{lemma}\label{lemmaTheLastLemma}
Let $ ( X , h ) $ be a fiberwise essentially minimal zero-dimensional system, let $ \sP $ be a partition of $ X $, let $ N \in \bZ_{ > 0 } $. Then there is a system $ \systembasic $ of finite first return time maps subordinate to $ \sP $ such that, setting $ \widehat{ X }_t = X_t \setminus ( Y_{ t , 1 } \cap h^{ J_{ t , 1 } } ( Y_{ t , 1 } ) ) $ for each $ t \in \{ 1 , \ldots , T \} $, we have:
\begin{enumerate}[(a)]
\item \label{lemmaTheLastLemma(a)} For each $ t \in \{ 1 , \ldots , T \} $ and each $ z \in \psi ( X_t ) $, $ Y_{ t , 1 } $ intersects the minimal set of $ ( \psi^{ -1 } ( z ) , h|_{ \psi^{ -1 } ( z ) } ) $.
\item \label{lemmaTheLastLemma(b)} For all $ t \in \{ 1 , \ldots , T \} $ and all $ n \in \{ 0 , \ldots , N - 1 \} $, $ h^n ( X_t ) $ is contained in an element of $ \sP $.
\item \label{lemmaTheLastLemma(c)} The partitions $ \sP_1 ( \sS ) $ and $ \sP_2 ( \sS ) $ are finer than $ \sP $.
\item \label{lemmaTheLastLemma(d)} The sets $ \widehat{ X }_t, h ( \widehat{ X }_t ) , \ldots , h^{ N } ( \widehat{ X }_t ) $ are pairwise disjoint.
\end{enumerate}
\end{lemma}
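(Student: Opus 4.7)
The plan is to start from a system $\sS'$ produced by Lemma \ref{lemmaAllJtkLargerThanNExceptJt1} applied with $N' = N$, so that $J'_{t,k} > N$ for every $k \geq 2$ and $\psi(X'_t) \subset Y'_{t,1}$. Conclusions (b) and (c) of the present lemma transfer from (b) and (c) of that lemma directly, and conclusion (a) follows from $\psi(X'_t) \subset Y'_{t,1}$ together with the fact (Definition \ref{defnFiberwiseEssentiallyMinimal}(\ref{defnFiberwiseEssentiallyMinimal(c)})) that every $z \in \psi(X'_t) \cap Z$ lies in the minimal set of $(\psi^{-1}(z), h|_{\psi^{-1}(z)})$. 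The real work is arranging conclusion (d).

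To see where the difficulty concentrates, one decomposes $\widehat{X}_t = \bigsqcup_{k \geq 2} Y_{t,k} \cup \bigsqcup_{k \geq 2} h^{J_{t,k}}(Y_{t,k})$. For $x \in \widehat{X}_t$ lying in some $Y'_{t,k}$ with $k \geq 2$, the inequality $J'_{t,k} > N$ forces $h^j(x) \notin X_t$ for every $j \in \{1, \ldots, N\}$, which immediately gives the disjointness. The only potentially problematic case is when $x \in Y'_{t,1} \setminus h^{J'_{t,1}}(Y'_{t,1})$ and $J'_{t,1} \leq N$: then some forward iterate $h^{m J'_{t,1}}(x)$ may land in a piece $Y'_{t,k}$ with $k \geq 2$ and thereby re-enter $\widehat{X}_t$, destroying disjointness.

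My plan for removing this obstruction is to refine $\sS'$ further to a system $\sS$ whose first piece $Y_{t,1}$ satisfies $h^{J_{t,1}}(Y_{t,1}) = Y_{t,1}$. With such invariance one has $Y_{t,1} \cap h^{J_{t,1}}(Y_{t,1}) = Y_{t,1}$, so $\widehat{X}_t = X_t \setminus Y_{t,1} = \bigsqcup_{k \geq 2} Y_{t,k}$, and conclusion (d) follows from the first case of the preceding analysis. To construct the invariant $Y_{t,1}$ inside $Y'_{t,1}$, I proceed fiberwise: for each $z \in Z \cap \psi(X'_t)$, the set $Y'_{t,1} \cap \psi^{-1}(z)$ is a compact open neighborhood of $z$ in the essentially minimal fiber $(\psi^{-1}(z), h|_{\psi^{-1}(z)})$, on which Proposition \ref{propEssentiallyMinimalReturnTimeMap}(\ref{propEssentiallyMinimalReturnTimeMap(c)}) makes the first-return map a bijection; this permits selecting a smaller compact open subset $W_z$ of $\psi^{-1}(z)$ that meets the minimal set and is carried into itself by $h^{J'_{t,1}}$. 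The continuity of $\psi$ together with Lemma \ref{lemmaClosedMapLemma} glues the fiberwise pieces $W_z$ into a compact open subset $Y_{t,1}$ of $X$; the corresponding bases $X_t$ are then built using Lemma \ref{lemmaWhereTheBasesMustBe}, and Proposition \ref{propReturnTimeMapsGiveFinerPartitions} is invoked a final time to restore (c).

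The main obstacle is the fiberwise-to-global gluing step, namely producing a single compact open set on which $h^{J_{t,1}}$ acts invariantly while still intersecting every minimal set. When $(X, h)$ has no periodic points this difficulty dissolves entirely: by Proposition \ref{propAperiodicGivesLargeReturnTime} one makes all return times exceed $N$, after which (d) follows from $h^j(X_t) \cap X_t = \varnothing$ for $j \in \{1, \ldots, N\}$ without any invariance argument, as indicated in the remark preceding the lemma. The subtlety of the general case lies in fibers whose minimal sets contain periodic orbits of small period, where $J_{t,1}$ is forced to be small and the invariance constraint must respect the arithmetic of the periods of those orbits.
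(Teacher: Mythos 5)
Your decomposition of $\widehat{X}_t$ into $\bigsqcup_{k\geq 2} Y_{t,k} \cup \bigsqcup_{k\geq 2} h^{J_{t,k}}(Y_{t,k})$ and your handling of the pieces with $J_{t,k}>N$ are both correct, and you correctly identify where the difficulty lies. But your plan for resolving it --- forcing $h^{J_{t,1}}(Y_{t,1}) = Y_{t,1}$ --- is unachievable in general, and this is a real gap, not a technical detail you've deferred.

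Consider $X = \bZ \cup \{\infty\}$ with the shift $h$ and $Z = \{\infty\}$, as in Example \ref{exmpIntegers}. Since $h(\infty) = \infty$, any compact open $Y_{t,1}$ containing $\infty$ and contained in a base $X_t$ has $\lambda_{X_t}(\infty) = 1$, so $J_{t,1} = 1$. Your invariance requirement then demands $h(Y_{t,1}) = Y_{t,1}$. But the only $h$-invariant compact open subsets of an essentially minimal system are $\varnothing$ and the whole space, so $Y_{t,1} = X$. That collapses the system to the trivial one, which violates conclusion (\ref{lemmaTheLastLemma(c)}) for any nontrivial $\sP$. The same obstruction arises more generally whenever a fiber's minimal set is a finite orbit of small period and $K_t > 1$: the period forces $J_{t,1}$ small, and $h^{J_{t,1}}$-invariance of $Y_{t,1}$ would produce a proper nonempty $h$-invariant compact open set in a fiber where none exist. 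Note also that Proposition \ref{propEssentiallyMinimalReturnTimeMap}(\ref{propEssentiallyMinimalReturnTimeMap(c)}) concerns the \emph{piecewise} first-return map (different powers of $h$ on different pieces), not the single iterate $h^{J_{t,1}}$; the surjectivity there does not give you a set carried into itself by $h^{J_{t,1}}$ alone.

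The paper's actual argument sidesteps invariance entirely. Starting from the $\sS'$ of Lemma \ref{lemmaAllJtkLargerThanNExceptJt1}, it identifies the specific ``collision'' sets $C_{t,n} \subset h^{J_{t,1}'}(Y_{t,1}')\cap\bigsqcup_{k\geq 2} Y_{t,k}'$ whose backward iterates would cause $\widehat{X}_t$ to return to itself within $N$ steps, and removes $\bigcup_{n=1}^N\bigcup_{m=1}^n h^{-m}(C_{t,n})$ from $X_t'$. After a further round of Lemma \ref{lemmaWhereTheBasesMustBe} and Proposition \ref{propReturnTimeMapsGiveFinerPartitions}, conclusion (\ref{lemmaTheLastLemma(d)}) is proved by a two-case contradiction: any putative $x\in\widehat{X}_t$ with $h^n(x)\in\widehat{X}_t$ must land in a removed $C_{t,n}$, or else forces $x\notin X_t$. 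In the $\bZ\cup\{\infty\}$ example this whittles $\widehat{X}_1$ down to the two-point set $\{a,b\}$ with $b-a>N$, where the disjointness is manifest. You will want to adopt some version of this ``trim the collisions'' strategy rather than seeking a single invariant piece.
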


\begin{proof}
Apply Lemma \ref{lemmaAllJtkLargerThanNExceptJt1} with $ N $ in place of $ N' $ to get a system $ \systemarg{ \prime } $ of finite first return time maps subordinate to $ \sP $ that satisfies the conclusions of the lemma. Let $ t \in \{ 1 , \ldots , T' \} $. Define 
\[
B_t = h^{ J_{ t , 1 }' } ( Y_{ t , 1 }' ) \cap \left ( \bigsqcup_{ k = 2 }^{ K_t' } Y_{ t , k }' \right)
\]
and for each $ n \in \{ 1 , \ldots , N \} $ define 
\[
C_{ t , n } = B_t \cap \left( \bigcup_{ k = 2 }^{ K_t' } h^n ( Y_{ t , 1 }' \cap h^{ J_{ t , k }' } ( Y_{ t , k }' ) ) \right). 
\]
Set $ T\dprime = T' $ and define
\[
X_t\dprime = X_t' \setminus \left( \bigcup_{ n = 1 }^N  \bigcup_{ m = 1 }^n h^{ -m } ( C_{ t , n } ) \cap X_t' \right).
\]

Let $ z \in Z $. Since $ \sS' $ satisfies the conclusions of Lemma \ref{lemmaAllJtkLargerThanNExceptJt1}, there is precisely one $ t \in \{ 1 , \ldots , T' \} $ such that $ z \in X_t' $. If $ z \in X_t\dprime $, then clearly $ X_t\dprime $ intersects the minimal set of $( \psi^{ -1 } ( z ) , h|_{ \psi^{ -1 } ( z ) } ) $. Suppose $ z \notin X_t\dprime $. Then there is some $ n \in \{ 1 , \ldots , N \} $ such that $ z \in \bigcup_{ m = 1 }^n h^{ -m } ( C_{ t , n } ) \cap X_t' $, which means that there is some $ m \in \{ 1 , \ldots , n \} $ such that 
\begin{equation}\label{eq:hn(z) is in Xtdprime}
h^m ( z ) \in X_t\dprime \cap h^{ J_{ t , 1 }' } ( Y_{ t , 1 }' ).
\end{equation}
Since $ h^m ( z ) $ is in the minimal set of $ ( \psi^{ -1 } ( z ) , h|_{ \psi^{ -1 } ( z ) } ) $, we see $ X_t\dprime $ intersects the minimal set of $ ( \psi^{ -1 } ( z ) , h|_{ \psi^{ -1 } ( z ) } ) $. By Lemma \ref{lemmaWhereTheBasesMustBe}, there is a system $ \systemarg{ \prime \prime } $ of finite first return time maps subordinate to $ \sP $. By using Proposition \ref{propReturnTimeMapsGiveFinerPartitions}, we are free to assume that $ \sP_1 ( \sS\dprime ) $ and $ \sP_2 ( \sS\dprime ) $ are finer than $ \sP $.

Let $ t \in \{ 1 , \ldots, T' \} $, suppose that $ J_{ t , 1 }' \leq N $, and suppose that $ \big\{ x \in X_t\dprime \setdiv \mbox{$\lambda_{ X_t\dprime } ( x ) = J_{ t , 1 }' $} \big\} $ is nonempty. We claim that 
\[
\big\{ x \in X_t\dprime \setdiv \mbox{$\lambda_{ X_t\dprime } ( x ) = J_{ t , 1 }' $} \big\} =  Y_{ t , 1 }' \cap X_t\dprime.
\] 
Let $ x \in X_t\dprime $ and suppose $ \lambda_{ X_t\dprime } ( x ) = J_{ t , 1 }' $. Then since $ X_t\dprime \subset X_t' $, we have $ \lambda_{ X_t' } ( x ) \leq J_{ t , k }' $. Thus, by Lemma \ref{lemmaAllJtkLargerThanNExceptJt1}(\ref{lemmaAllJtkLargerThanNExceptJt1(b)}), we have $ x \in Y_{ t , 1 }' $. Thus, $ \big\{ x \in X_t\dprime \setdiv \mbox{$ \lambda_{ X_t\dprime } ( x ) = J_{ t , 1 }' $} \big\} \subset  Y_{ t , 1 }' \cap X_t\dprime $.  So suppose $ x \in X_t\dprime \cap Y_{ t , 1 }' $ and suppose $ h^{ J_{ t , 1 }' } ( x ) \notin X_t\dprime $. This means that there is some $ n \in \{ 1 , \ldots , N \} $ and some $ m \in \{ 1 , \ldots , n \} $ such that $ h^{ J_{ t , 1 }' } ( x ) \in h^{ -m } ( C_{ t , n } ) $. But then notice that $ h^j ( h^{ J_{ t , 1 }' } ( x ) ) \notin X_t\dprime $ for all $ j \in \{ 0 , \ldots , n - m \} $, and so since $ x \in X_t\dprime $, this must mean that $ n - m \leq J_{ t , 1 }' $. But since $ h^{ J_{ t , 1 }' + m } ( x )  \in C_{ t , n } $, we have $ h^{ J_{ t , 1 }' + m - n } ( x ) \in h^{ -n } ( C_{ t , n } ) \subset X_t' $, a contradiction to $ \lambda_{ X_t' } ( x ) = J_{ t , 1 }' $. Thus, $ \big\{ x \in X_t\dprime \setdiv \mbox{$\lambda_{ X_t\dprime } ( x ) = J_{ t , 1 }' $} \big\} =  Y_{ t , 1 }' \cap X_t\dprime $, and so we are free to assume that 
\[
\big\{ x \in X_t\dprime \setdiv \mbox{$\lambda_{ X_t\dprime } ( x ) = J_{ t , 1 }' $} \big\} = Y_{ t , 1 }\dprime,
\] 
by combining all $ Y_{ t , k }\dprime $ with $ J_{ t , k }\dprime = J_{ t , 1 }' $, since this assumption does not contradict the fact that $ \sP_1 ( \sS\dprime ) $ and $ \sP_2 ( \sS\dprime ) $ are finer than $ \sP $, since we have $ J_{ t , 1 }\dprime = J_{ t , 1 }' $ and $ h^j ( Y_{ t , 1 }\dprime ) \subset h^j ( Y_{ t , 1 }' )$ for all $ j \in \{ 0 , \ldots , J_{ t , 1 }' \} $. Notice that by (\ref{eq:hn(z) is in Xtdprime}), for all $ z \in \psi ( X_t\dprime ) $, $ Y_{ t , 1 }\dprime $ intersects the minimal set of $ ( \psi^{ -1 } ( z ) , h|_{ \psi^{ -1 } ( z ) } ) $. Moreover, for all $ k \in \{ 2 , \ldots , K_t\dprime \} $, we have $ Y_{ t , k }\dprime \subset \bigsqcup_{ l = 2 }^{ K_t' } Y_{ t , k }' $, and so $ J_{ t , k }\dprime >  N $. 

Now, note that by (\ref{eq:hn(z) is in Xtdprime}), we have
\[
Z \subset \bigcup_{ j = -N }^0 \bigsqcup_{ t = 1 }^{ T\dprime } h^j ( X_t\dprime ) .
\]
Also, observe that if $ t \in \{ 1 , \ldots , T\dprime \} $, $ k \in \{ 1 , \ldots , K_t\dprime \} $, and $ z \in Z \cap \bigcup_{ j = -N }^0 h^j ( Y_{ t , k }\dprime ) $, then $ Y_{ t , k }\dprime $ intersects the minimal set of $ ( \psi^{ -1 } ( z ) , h|_{ \psi^{ -1 } ( z ) } ) $.

Let $ \{ a ( 1 ) , \ldots , a ( L ) \} $ be the set of all $ t \in \{ 1 , \ldots , T\dprime \} $ such that $ Z \cap \bigcup_{ j = -N }^0 h^j ( Y_{ t , 1 }\dprime ) \neq \varnothing $. For each $ t \in \{ 1 , \ldots , L \} $, define 
\[
X_t = X_{ a ( t ) }\dprime \cap \psi^{ -1 } \left( Z \cap \bigcup_{ j = -N }^0 h^j ( Y_{ a ( t ) , 1 }\dprime ) \right).
\]
Let $ \{ b ( t , 1 ) , \ldots , b ( t , K_l ) \} $ be the set of all $ k \in \{ 1 , \ldots , K_{ a ( t ) }\dprime \} $ such that $ Y_{ a ( t ) , k }\dprime \cap X_t \neq \varnothing $, making the choice 
\begin{equation}\label{eq:Condition(a)1}
b ( t , 1 ) = 1 .
\end{equation}
For each $ k \in \{ 1 , \ldots , K_t \} $, define $ Y_{ t , k } = Y_{ a ( t ) , b ( t , k ) }\dprime \cap X_t $ and define $ J_{ t , k } = J_{ a ( t ) , b ( t , k ) }\dprime $. Since for all $ z \in \psi ( X_{ a ( t ) }\dprime ) $, $ Y_{ a ( t ) , 1 }\dprime $ intersects the minimal set of $ ( \psi^{ -1 } ( z ) , h|_{ \psi^{ -1 } ( z ) } ) $, by the above definitions we see that for all $ z \in \psi ( X_t ) $, $ Y_{ t , 1 } $ intersects the minimal set of $ ( \psi^{ -1 } ( z ) , h|_{ \psi^{ -1 } ( z ) } ) $.

Set 
\begin{equation}\label{eq:widetildeX definition}
\widetilde{ X } = X \setminus \left ( \bigsqcup_{ t = 1 }^L \bigsqcup_{ k = 1 }^{ K_t } \bigsqcup_{ j = 0 }^{ J_{ t , l } - 1 } h^j ( Y_{ t , k } )  \right).
\end{equation}
If $ \widetilde{ X } $ is empty, set $ T = L $. Otherwise, set $ \widetilde{ Z } = Z \cap \widetilde{ X } $ and set $ \widetilde{ \psi } = \psi|_{ \widetilde{ X } } $. Notice that if $ x \in \widetilde{ X } $, then $ \psi ( x ) \notin \bigsqcup_{ t = 1 }^L \bigcup_{ j = -N }^0 h^j ( Y_{ a ( t ) , 1 }\dprime ) $, and therefore $ \psi ( x ) \in \widetilde{ Z } $. Thus, $ ( \widetilde{ X }, h|_{ \widetilde{ X } } ) $ is a fiberwise essentially minimal zero-dimensional system where $ \widetilde{ \psi } : \widetilde{ X } \to \widetilde{ Z } $ satisfies the requirements of Definition \ref{defnFiberwiseEssentiallyMinimal}. Notice that for each $ z \in \widetilde{ Z } $, there is a $ t \in \{ 1 , \ldots , T\dprime \} $ and a $ k \in \{ 2 , \ldots , K_t\dprime \} $ such that 
\begin{equation}\label{eq:z in widetildeX}
z \in \left( \widetilde{ X } \cap \bigcup_{ j = -N }^0 h^j ( Y_{ t , k }\dprime ) \right) .
\end{equation} 

Let $ \{ a' ( 1 ) , \ldots , a' ( \widetilde{ T } ) \} $ be the set of all $ t \in \{ 1 , \ldots , T\dprime \} $ such that $ \widetilde{ Z } \cap  \bigcup_{ j = -N }^0 h^j ( X_t\dprime ) = \varnothing $. For each $ t \in \{ 1 , \ldots , \widetilde{ T } \} $, set $ \widetilde{ X }_t = \widetilde{ X } \cap \left( X_{ a' ( t ) }\dprime \setminus Y_{ a' ( t ) , 1 }\dprime \right) $. Then from (\ref{eq:z in widetildeX}), we see that for every $ z \in \widetilde{ Z } $, there is a $ t \in \{ 1 , \ldots , \widetilde{ T } \}$ such that $ \widetilde{ X }_t $  intersects the minimal set of $ ( \widetilde{\psi}^{ -1 } ( z ) , h|_{ \widetilde{\psi}^{ -1 } ( z ) } ) $. Thus, by Lemma \ref{lemmaWhereTheBasesMustBe}, there is a system 
\[
\widetilde{ \sS } = ( \widetilde{ T } , ( \widetilde{ X }_t ) , ( \widetilde{ K }_t ) , ( \widetilde{ Y }_{ t , k } ) , ( \widetilde{ J }_{ t , k } ) ) , 
\]
of finite first return time map subordinate to 
\[ 
\widetilde { \sP } = \big\{ U \cap \widetilde{ X } \setdiv \mbox{$ U \in \sP $ and $ U \cap \widetilde{ X } \neq \varnothing$} \big\}.
\]
For each $ t \in \{ 1 , \ldots , \widetilde{ T } \} $, let $ \{ c ( t , 1 ) , \ldots , c ( t , N_t ) \} $ be the set of all $ k \in \{ 1 , \ldots , \widetilde{ K }_t \} $ such that $ \widetilde{ Z } \cap \bigcup_{ j = -N }^0 h^j ( \widetilde{ Y }_{ t , k } ) \neq \varnothing $. 

Let $ s \in \{ 1 , \ldots , \widetilde{ T } \} $, let $ n \in \{ 1 , \ldots , N_s \} $ and set $ t = L + n + \sum_{ r = 1 }^ { s - 1 } N_r $. Define 
\[
X_t = \widetilde{ X }_s \cap \psi^{ - 1 } \left( \widetilde{ Z } \cap \bigcup_{ j = -N }^0 h^j ( \widetilde{ Y }_{ s , c ( s , n ) } ) \right).
\]
Let $ \{ d ( t , 1 ) , \ldots , d ( t , K_t ) \} $ be the set of all $ k \in \{ 1 , \ldots , \widetilde{ K }_s \} $ such that $ \widetilde{ Y }_{ s , k } \cap X_t \neq \varnothing $, where we make the choice 
\begin{equation}\label{eq:Condition(a)2}
d ( t , 1 ) = c ( s , n ) .
\end{equation} For each $ l \in \{ 1 , \ldots , K_t \} $, set $ Y_{ t , l } = X_t \cap \widetilde{ Y }_{ s , d ( t , l ) } $ and set $ J_{ t , l } = \widetilde{ J }_{ s , d ( t , l ) } $.

Set $ T = L + \sum_{ r = 1 }^{ \widetilde{ T } } N_r $. We now check that, for $ ( X , h ) $, $ \systembasic $ is a system of finite first return time maps subordinate to $ \sP $ by verifying the conditions of Definition \ref{defnSystemOfFiniteReturnTimeMaps}. That conditions (\ref{defnSystemOfFiniteReturnTimeMaps(a)}) and (\ref{defnSystemOfFiniteReturnTimeMaps(c)}) hold is clear. Let $ t \in \{ 1 , \ldots , L \} $. Then $ \bigcup_{ j = -N }^0 h^j ( Y_{ t , 1 }\dprime ) $ is compact and open in $ X $, and so $ Z \cap \bigcup_{ j = -N }^0 h^j ( Y_{ a ( t ) , 1 }\dprime ) $ is compact and open in $ Z $, and by the continuity of $ \psi $, $ \psi^{ -1 } (  Z \cap \bigcup_{ j = -N }^0 h^j ( Y_{ a ( t ) , 1 }\dprime ) ) $ is compact and open in $ X $, and so $ X_t $ is therefore compact and open in $ X $. It is also clear by construction that $ X_t $ is nonempty. Moreover, since $ X_t \subset X_{ a ( t ) }\dprime $, and $ X_{ a ( t ) }\dprime $ is contained in an element of $ \sP $, so is $ X_t $. Now, let $ t \in \{ L + 1 , \ldots , T \} $. By the exact same reasoning, $ X_t $ is a nonempty compact open subset of $ X $. Since $ \widetilde{ \sS } $ is subordinate to $ \widetilde { \sP }$, and since every element of $ \widetilde{ \sP } $ is contained in an element of $ \sP $, we see that $ X_t $ is contained in an element of $ \sP $. Thus, condition (\ref{defnSystemOfFiniteReturnTimeMaps(b)}) holds. It is now routine to verify conditions (d), (e), and (f) from the definitions, so we omit the computations.

We now show that $ \sS $ satisfies the conclusions of the lemma. Conclusion (\ref{lemmaTheLastLemma(a)}) follows from (\ref{eq:Condition(a)1}) and (\ref{eq:Condition(a)2}). Conclusion (\ref{lemmaTheLastLemma(b)}) follows from the fact that for each $ t \in \{ 1 , \ldots , T \} $, there is some $ s \in \{ 1 , \ldots , T\dprime \} $ such that $ X_t \subset X_s\dprime \subset X_s' $, and the fact that $ \sS' $ satisfies Lemma \ref{lemmaAllJtkLargerThanNExceptJt1}(\ref{lemmaAllJtkLargerThanNExceptJt1(b)}). Conclusion (\ref{lemmaTheLastLemma(c)}) follows from the definitions of $ \widetilde{ \sS } $ and of $ X_t $ and $ Y_{ t , k } $ for $ t \in \{ 1 , \ldots , T \} $ and $ k \in \{ 1 , \ldots , K_t \} $, along with an application of Lemmas \ref{lemmaConditionForFinerPartitions} and \ref{lemmaPartition1FinerIffPartition2Finer} along with the fact that $ \sP_1 ( \sS\dprime ) $ and $ \sP ( \sS\dprime ) $ are finer than $ \sP $. For (\ref{lemmaTheLastLemma(d)}), notice that if $ t \in \{ L + 1 , \ldots, T \} $, then $ J_{ t , k } > N $ for all $ k \in \{ 1 , \ldots , K_t \} $, and so $ \widehat{ X }_t , h ( \widehat{ X }_t ) , \ldots , h^N ( \widehat{ X }_t ) $ are clearly pairwise disjoint. So let $ t \in \{ 1 , \ldots , L \} $, let $ x \in \widehat{ X }_t $, and suppose there is some $ n \in \{ 1 , \ldots , N \} $ such that $ h^n ( x ) \in \widehat{ X }_t $. This means that $ \lambda_{ X_t } ( x ) \leq N $, and, from our work above, we know that means $ x \in Y_{ a ( t ) , 1 }' $. By definition of $ \widehat{ X }_t $, this means that there is some $ k \in \{ 2 , \ldots , K_{ a ( t ) }' \} $ such that $ x \in Y_{ a ( t ) , 1 }' \cap h^{ J_{ a ( t ) , k }' } ( Y_{ a ( t ) , k }' ) $.  We now have two cases. First, suppose that $ h^n ( x ) \in B_{ a ( t ) } $. But since $ x \in Y_{ a ( t ) , 1 }' \cap h^{ J_{ a ( t ) , k }' } ( Y_{ a ( t ) , k }' ) $, this would mean that $ h^n ( x ) \in C_{ t , n } $, which means that $ x \notin X_t\dprime $, a contradiction. The second possibility is that $ h^n ( x ) \in Y_{ a ( t ) , 1 }' \cap h^{ J_{ a ( t ) , l }' } ( Y_{ a ( t ) , l }' ) $ for some $ l \in \{ 2 , \ldots , K_{ a ( t ) }' \} $. But then since $ J_{ a ( t ) , l }' > n $, we would have $ x =  h^{ -n } ( h^n ( x ) )  \notin X_{ a ( t ) }' $, a contradiction. Thus, $ \widehat{ X }_t \cap h^n ( \widehat{ X }_t ) = \varnothing $. This proves that $ \sS $ satisfies the conclusions of the lemma and therefore proves the lemma.
\end{proof}

\begin{proof}[Proof of Theorem \ref{thmMainTheorem}]
Let $ \eps > 0 $, let $ N \in \bZ_{ > 0 } $ satisfy $ \pi / N < \eps $, and let $ \sP $ be a partition of $ X $. Following the proof of Theorem 2.1 of \cite{Putnam90}, we will show that there is a $ C^* $-subalgebra $ A $ of $ C^*( \bZ , X , h ) $ which is isomorphic to a direct sum of matrix algebras and matrix algebras over $ C( S^1 ) $ such that $ C ( \sP ) \subset A $ and such that $ A $ contains a unitary $ u' $ such that $ \| u' - u \| < \eps $. By using the semiprojectivity of circle algebras to construct a direct system, this will imply that $ C^* ( \bZ , X , h ) $ is an A$ \bT $-algebra.

Let $ Z $ and $ \psi $ be as in Definition \ref{defnFiberwiseEssentiallyMinimal}. Apply Lemma \ref{lemmaTheLastLemma} to get a system 
\[
\systembasic
\]
of finite first return time maps subordinate to $ \sP $ satisfying the conclusions of the lemma. Now, notice that Lemma \ref{lemmaTheLastLemma}(\ref{lemmaTheLastLemma(a)}) says that for each $ t \in \{ 1 , \ldots , T \} $ and each $ z \in  \psi ( X_t ) $, $ Y_{ t , 1 } $ intersects the minimal set of $ ( \psi^{ -1 } ( z ) , h|_{ \psi^{ -1 } ( z ) } ) $, and hence, $ h^{ J_{ t , 1 } } ( Y_{ t , 1 } ) $ intersects the minimal set of $ ( \psi^{ -1 } ( z ) , h|_{ \psi^{ -1 } ( z ) } ) $. Since by Proposition \ref{propPropertiesOfX_t}(\ref{propPropertiesOfX_t(b)}) we have $ \bigsqcup_{ t = 1 }^{ T } \bigcup_{ j \in \bZ } h^j ( X_t ) = X $, it follows that for every $ z \in Z $, there is a unique $ t \in \{ 1 , \ldots , T \} $ such that $ h^{ J_{ t , 1 } } ( Y_{ t , 1 } ) $ intersects the minimal set of $ ( \psi^{ -1 } ( z ) , h|_{ \psi^{ -1 } ( z ) } ) $. Thus, we can apply Lemma \ref{lemmaWhereTheBasesMustBe} with $ h^{ J_{ 1 , 1 } } ( Y_{ 1 , 1 } ), \ldots , h^{ J_{ T , 1 } } ( Y_{ T , 1 } ) $ in place of $ X_1 , \ldots , X_T $ to get a system $ \systemarg{ \prime } $ of finite first return time maps subordinate to $ \sP $ where $ T' = T $ and 
\[
X_t' = h^{ J_{ t , 1 } } ( Y_{ t , 1 } )
\]
for all $ t \in \{ 1 , \ldots , T \} $. By applying Proposition \ref{propReturnTimeMapsGiveFinerPartitions}, we may assume that $ \sP_1 ( \sS' ) $ is finer than both $ \sP_1 ( \sS ) $ and $ \sP_2 ( \sS ) $.

For each $ t \in \{ 1 , \ldots , T \} $, each $ k \in \{ 1 , \ldots , K_t \}$, and each $ i , j \in \{ 0 , \ldots , J_{ t , k } - 1 \}$, define 
\[
e_{ i , j }^{ ( t , k ) } = \chi_{ h^i ( Y_{ t , k } ) } u^{ i - j } \chi_{ h^j ( Y_{ t , k } ) } .
\]
One can check that these elements are matrix units for a finite dimensional $ C^* $-subalgebra of $ C^* ( \bZ , X , h ) $ isomorphic to $ \bigoplus_{ t = 1 }^{ T } \bigoplus_{ k = 1 }^{ K_t } M_{ J_{ t , k } }$. Notice that $ C ( \sP_1 ( \sS ) ) \subset C^* ( \bZ , X , h ) $ is equal to the set of diagonal matrices in $ A_n $. Since by $ \sP_1 ( \sS ) $ is finer than $ \sP $, we have $ C ( \sP ) \subset C ( \sP_1 ( \sS ) ) $, and so it follows that 
\[
C ( \sP ) \subset A_1 .
\]

Define an element $ v_1 \in A_1 $ by 
\[
v_1 = \sum_{ t = 1 }^{ T } \sum_{ k = 1 }^{ K_t } \left( \chi_{ Y_{ t , k } } u^{ 1 - J_{ t , k } } \chi_{ h^{ J_{ t , k } - 1 } ( Y_{ t , k } ) } + \sum_{ j = 0 }^{ J_{ t , k } - 2 } \chi_{ h^{ j + 1 } ( Y_{ t , k } ) } u \chi_{ h^j ( Y_{ t , k } ) } \right).
\]
To see what $ v_1 $ does, let $ t \in \{ 1 , \ldots , T \}$ and let $ k \in \{ 1 , \ldots , K_t \} $, and observe that for $ j \in \{ 0 , \ldots , J_{ t , k } - 2 \}$ we have
\begin{align}
v_1 \chi_{ h^j ( Y_{ t , k } ) } v_1^* &= \chi_{ h^{ j + 1 } ( Y_{ t , k } ) } u \chi_{ h^j ( Y_{ t , k } ) } u^* \chi_{ h^{ j + 1 } ( Y_{ t , k } ) } \nonumber \\
&= \chi_{ h^{ j + 1 } ( Y_{ t , k } ) } . \label{eq:v1 Moves h^j(Ytk)}
\end{align}
We also have 
\begin{align}
v_1 \chi_{ h^{ J_{ t , k } - 1 } ( Y_{ t , k } ) } v_1^* &= \chi_{ Y_{ t , k } } u^{ 1 - J_{ t , k } } \chi_{ h^{ J_{ t , k } - 1 } ( Y_{ t , k } ) } u^{ J_{ t , k } - 1 } \chi_{ Y_{ t , k } } \nonumber \\
&= \chi_{ Y_{ t , k } }. \label{eq:v1 Moves h^Jtk(Ytk)}
\end{align}
Define $ u_1 = v_1^* u $. First note the formula for $ u_1 $:
\begin{align}
u_1 &= v_1^* u \nonumber \\
&= \sum_{ t = 1 }^T \sum_{ k = 1 }^{ K_t } \left( \chi_{ h^{ J_{ t , k } - 1 } ( Y_{ t , k } ) } u^{ J_{ t , k } } \chi_{ h^{ -1 } ( Y_{ t , k } ) } + \sum_{ j = 0 }^{ J_{ t , k } - 2 } \chi_{ h^j ( Y_{ t , k } ) } \right) . \nonumber 
\end{align}
To see what $ u_1 $ does, let $ t \in \{ 1 , \ldots , T \}$ and let $ k \in \{ 1 , \ldots , K_t \}$, and observe that for $ j \in \{ 0 , \ldots , J_{ t , k }  - 2 \} $ we have
\begin{align}
u_1 \chi_{ h^j ( Y_{ t , k } ) } u_1^* &= v_1^* \chi_{ h^{ j + 1 } ( Y_{ t , k }  ) } v_1 \nonumber  \\
&= \chi_{ h^j ( Y_{ t , k } ) } , \nonumber 
\end{align}
where the last line is justified by $(\ref{eq:v1 Moves h^j(Ytk)})$. We also have 
\begin{align}
u_1 \chi_{ h^{ -1 } ( Y_{ t , k } ) } u_1^* &= v_1^* \chi_{ Y_{ t , k }  } v_1 \nonumber \\
&= \chi_{ h^{ J_{ t , k } - 1 } ( Y_{ t , k } ) } , \nonumber
\end{align}
where the last line is justified by $(\ref{eq:v1 Moves h^Jtk(Ytk)})$.

Using $ \sS' $, we similarly define $ A_2 $, $ v_2 $, and $ u_2 $. Specifically, we define $ A_2 $ to be the $ C^* $-algebra generated by the matrix units 
\[
e_{ i , j }^{ ( t , k )\prime } = \chi_{ h^i ( Y_{ t , k }' ) } u^{ i - j } \chi_{ h^j ( Y_{ t , k }' ) }
\]
for $ t \in \{ 1 , \ldots , T \} $, $ k \in \{ 1 , \ldots , K_t' \} $, and $ i , j \in \{ 0 , \ldots , J_{ t , k }' - 1 \} $, we define 
\[
v_2 = \sum_{ t = 1 }^{ T } \sum_{ k = 1 }^{ K_t' } \left( \chi_{ Y_{ t , k }' } u^{ 1 - J_{ t , k }' } \chi_{ h^{ J_{ t , k }' - 1 } ( Y_{ t , k }' ) } + \sum_{ j = 0 }^{ J_{ t , k }' - 2 } \chi_{ h^{ j + 1 } ( Y_{ t , k }' ) } u \chi_{ h^j ( Y_{ t , k }' ) } \right),
\]
and we define $ u_2 = v_2^* u $. Note that since $ \sP_1 ( \sS' ) $ is finer than $ \sP_1 ( \sS ) $ and since $ X_t' \subset X_t $ for all $ t \in \{ 1 , \ldots , T \} $, we have $ A_1 \subset A_2 $.

Now consider the unitary $ v_2 v_1^* $, which is in $ A_2 $. Before our computations, first note that if $ U $ is a compact open subset of $ X \setminus \bigsqcup_{ t = 1 }^{ T } h^{ -1 } ( X_t' ) $, then 
\begin{equation}\label{v2ActsAsuSometimes}
v_2 \chi_U v_2^* = \chi_{ h ( U ) } . 
\end{equation}
Let $ t \in \{ 1 , \ldots , T \} $.
We have
\begin{align}
v_2 v_1^* \chi_{ Y_{ t , 1 } } v_1 v_2^* &= v_2 \chi_{ h^{ J_{ t , 1 } - 1 } ( Y_{ t , 1 } ) } v_2^* &\text{by (\ref{eq:v1 Moves h^Jtk(Ytk)})} \nonumber \\
&= v_2 \chi_{ h^{ -1 } ( X_t' ) } v_2^*  \nonumber \\
&= \sum_{ k = 1 }^{ K_t' } ( v_2 \chi_{ h^{ J_{ t , k }' - 1 } ( Y_{ t , k }' ) } v_2^* )   \nonumber \\
&= \sum_{ k = 1 }^{ K_t' } \chi_{ Y_{ t , k }' } \nonumber \\
&= \chi_{ X_t' } \nonumber \\
&= \chi_{ h^{ J_{ t , 1 } } ( Y_{ t , 1 } ) } . \label{eqv2v1CommutesWithYt1}
\end{align}
Now let $ k \in \{ 2 , \ldots , K_t \} $. Since $ \sP_1( \sS' ) $ is finer than $ \sP_1 ( \sS ) $ and since $ h^{ J_{ t , 1 } } ( Y_{ t , 1 } ) = X_t' = \bigsqcup_{ l = 1 }^{ K_t' } Y_{ t , l }' $, there is a set 
\[
F_{ t , k } \subset \big\{ ( l , j ) \setdiv \mbox{$ l \in \{ 1 , \ldots , K_t' \} $, and $ j \in \{ 1 , \ldots , J_{ t , l }' - 1 \}$}\big\}
\]
such that 
\[
h^{ J_{ t , k } } ( Y_{ t , k } ) = \bigsqcup_{ ( l , j ) \in F_{ t , k } } h^{ j } ( Y_{ t , l }' ) .
\] 
\begin{align}
v_2 v_1^* \chi_{ Y_{ t , k } } v_1 v_2^* &= v_2 \chi_{ h^{ J_{ t , k } - 1 } ( Y_{ t , k } ) } v_2^* \nonumber \\
&= \sum_{ ( l , j ) \in F_{ t , k } } v_2 \chi_{ h^{ j - 1 } ( Y_{ t , l }'  ) } v_2^* \nonumber \\
&= \sum_{ ( l , j ) \in F_{ t , k } } \chi_{ h^{ j } ( Y_{ t , l }' ) } &\text{by (\ref{v2ActsAsuSometimes})} \nonumber \\
&= \chi_{ h^{ J_{ t , k } } ( Y_{ t , k } ) } . \label{eqv2v1CommutesWithYtk}
\end{align}
In particular, by (\ref{eqv2v1CommutesWithYt1}) and (\ref{eqv2v1CommutesWithYtk}), we see that \begin{equation}\label{eqv2v1CommutesWithXt}
v_2 v_1^* \chi_{ X_t } v_1 v_2^* = \chi_{ X_t } .
\end{equation}

Let $ t \in D $. Recall that $ Y_{ t , 1 }' = Y_{ t , 1 } \cap h^{ J_{ t , 1 } } ( Y_{ t , 1 } ) $ and that $ J_{ t , 1 }' = J_{ t , 1 } $. We therefore have
\begin{align}
v_2 v_1^* \chi_{ Y_{ t , 1 }' } &=  v_2 \chi_{ h^{ J_{ t , 1 } - 1 } ( Y_{ t , 1 }' ) } u^{ J_{ t , 1 } - 1 }  \nonumber \\
&= \chi_{ Y_{ t , 1 }' }. \label{eq:v2v1 identity on Yt1'}
\end{align}

As in Lemma \ref{lemmaTheLastLemma}, for each $ t \in \{ 1 , \ldots , T \} $, we set
\[
\widehat{ X }_t = X_t \setminus ( Y_{ t , 1 } \cap h^{ J_{ t , 1 } } ( Y_{ t , 1 } ) ).
\]
Additionally, we set
\[
Y = \bigsqcup_{ t = 1 }^T \widehat{ X }_t
\]
Recall that $ A_1 \subset A_2 $, and so $ \chi_Y $ and $ v_1 $ are elements of $ A_2 $. Thus, (\ref{eqv2v1CommutesWithXt}) and (\ref{eq:v2v1 identity on Yt1'}) tell us that $ \chi_{ Y } v_2 v_1^* \chi_{ Y } $ is a unitary in $ \chi_{ Y } A_2 \chi_{ Y } $.

Set $ v = \chi_{ Y } v_2 v_1^* \chi_{ Y } $.  Since $ \chi_{ Y } A_2 \chi_{ Y } $ is a finite dimensional $C^*$-algebra, $ v $ has finite spectrum. By Lemma \ref{lemmaFiniteSpectrumLogarithm}, there is a unitary $ w $ in $ \chi_Y A \chi_Y $ with $ w^N = v $ and $ \| w - \chi_Y \| \leq \pi / N < \eps $. Define 
\[
z = \sum_{ j = 0 }^{ N - 1 } \chi_{ h^j ( Y ) } u^j w^{ N - j } u^{ -j } \chi_{ h^j ( Y ) } + \chi_{ X \setminus \bigsqcup_{ j = 0 }^{ N - 1 } h^j ( Y ) } .
\]
It is easy to see that $ z $ is a unitary, since $ z = \sum_{ j = 0 }^N z_j $ for unitaries $ z_j \in \chi_{ h^j ( Y ) } C^* ( \bZ , X , h ) \chi_{ h^j ( Y ) } $ for $ j \in \{ 0 , \ldots , N - 1 \} $ and a unitary $ z_N = \chi_{ X \setminus \bigsqcup_{ j = 0 }^{ N - 1 } h^j ( Y ) } . $ We claim that $ A $, the $ C^* $-algebra generated by $ z A_1 z^* $ and $ z u_2 z^* $, has the desired properties. Specifically, we claim that 
\[
A  \cong \bigoplus_{ t = 1 }^{ T } \left(  \left( C ( S^1 ) \otimes M_{ J_{ t , 1 } } \right)\oplus \left( \bigoplus_{ k = 2 }^{ K_t } M_{ J_{ t , k } } \right) \right) , 
\]
$ A $ contains $ C ( \sP ) $, and $ A $ contains a unitary $ u' $ such that $ \| u' - u \| < \eps $.

First, we want $ C ( \sP ) \subset A $. Because $ \sP_1 ( \sS ) $ is finer than $ \sP $, we have $ C ( \sP ) \subset A_1 $, so all that is left to show is that $ z $ commutes with $ C ( \sP ) $. To see this, let $ U \in \sP $. Since for all $ t \in \{ 1 , \ldots , T \} $ and for all $ n \in \{ 0 , \ldots , N - 1 \} $, $ h^n ( X_t ) $ is contained in an element of $ \sP $, we also see $ h^n ( \widehat{ X }_t ) $ is contained in an element of $ \sP $, and so we can write $ U = \bigsqcup_{ r = 0 }^R U_r $ where $ U_0 \subset X \setminus \bigsqcup_{ j = 0 }^{ N - 1 } h^j ( Y ) $ and for all $ r \in \{ 1 , \ldots , R \} $, there are $ q_r \in \{ 1 , \ldots , T \} $ and $ m_r \in \{ 0 , \ldots , N - 1 \}$ such that $ U_r = h^{ m_r } ( \widehat{ X }_{ q_r } ) $. By (\ref{eqv2v1CommutesWithXt}) and (\ref{eq:v2v1 identity on Yt1'}), we know that $ v $ commutes with $\chi_{ h^{ -m_r } ( U_r ) } $ for every $ r \in \{ 1 , \ldots , R \} $. So by Lemma \ref{lemmaFiniteSpectrumLogarithm}, $ w $ commutes with $ \chi_{ h^{ -m_r } ( U_r ) } $ for all $ r \in \{ 1 , \ldots , R \} $ as well. We now have:
\begin{align*}
\chi_U z &= \left( \sum_{ r = 0 }^R \chi_{ U_r } \right) \left( \sum_{ j = 0 }^{ N - 1 } \sum_{ t = 1 }^T \chi_{ h^j ( Y ) } u^{ j } w^{ N - j } u^{ -j } \chi_{ h^j ( Y ) }  + \chi_{ X \setminus \bigsqcup_{ j = 0 }^{ N - 1 } h^j ( Y ) } \right) \\
&= \sum_{r=1}^R \chi_{ U_r } u^{ m_r } w^{ N - m_r } u^{ -m_r } \chi_{ U_r } + \chi_{ U_0 } .
\end{align*}
A similar computation yields the same thing for $ z \chi_U $. Thus, $ z $ commutes with $ \chi_U $ for all $ U \in \sP $, which shows that $ z $ commutes with $ C ( \sP ) $.

Now, we define $ u' = z v_1 u_2 z^* $, a unitary in $ A $. We still must show that $ \| u' - u \| < \eps $. We have
\begin{align}
\| u' - u \| &= \| z v_1 u_2 z^* - u \|  \nonumber \\
&= \| z v_1 u_2 - u z \|  \label{u'-u}
\end{align}
We will now show that $\| z v_1 u_2 - u z \| < \eps$. 

Let $ E $ be a compact open subset of $ X \setminus h^{ -1 } ( Y ) $. Then $ u_2 \chi_E = u_1 \chi_E $ and so 
\begin{equation}\label{eq:v1u2chiE}
v_1 u_2 \chi_E = u \chi_E .
\end{equation}

Let $ j \in \{ 0 , \ldots , N - 2 \} $. We have
\begin{align*}
\chi_{ h^{ j + 1 } ( Y ) } z v_1 u_2 \chi_{ h^j ( Y ) } &=  \chi_{ h^{ j + 1 } ( Y ) } z u \chi_{ h^j ( Y ) } &\text{by (\ref{eq:v1u2chiE})} \\
&= \chi_{ h^{ j + 1 } ( Y ) } u^{ j + 1 } w^{ N - ( j + 1 ) } u^j \chi_{ h^j ( Y ) }
\end{align*}
and
\begin{align*}
\chi_{ h^{ j + 1 } ( Y ) } u z \chi_{ h^j ( Y ) } &= \chi_{ h^{ j + 1 } ( Y ) } u^{ j + 1 } w^{ N - j } u^j \chi_{ h^j ( Y ) }
\end{align*}
so since $ \| w - \chi_Y \| < \eps $, we have
\begin{equation}\label{eq:hjlessthaneps}
\| \chi_{ h^{ j + 1 } ( Y ) } ( z v_1 u_2 - u z ) \chi_{ h^j ( Y ) } \| < \eps.
\end{equation}
Similarly, we have
\begin{align*}
\chi_{ h^N ( Y ) } z v_1 u_2 \chi_{ h^{ N - 1 } ( Y ) } &=  \chi_{ h^N ( Y ) } z u \chi_{ h^{ N - 1 } ( Y ) } &\text{by (\ref{eq:v1u2chiE})} \\
&=  \chi_{ h^N ( Y ) } u \chi_{ h^{ N - 1 } ( Y ) }
\end{align*}
and 
\begin{align*}
\chi_{ h^N ( Y ) } u z \chi_{ h^j ( Y ) } &= \chi_{ h^N ( Y ) } u^N w u^{ N - 1 } \chi_{ h^{ N - 1 } ( Y ) }
\end{align*}
and again since $ \| w - \chi_Y \| < \eps $, we have
\begin{equation}\label{eq:hNlessthaneps}
\| \chi_{ h^N ( Y ) } ( z v_1 u_2 - u z ) \chi_{ h^{ N - 1 } ( Y ) } \| < \eps.
\end{equation}

Let $ j \in \{ 0 , \ldots , N - 1 \} $ and let $ p $ be any projection orthogonal to $ h^{ j + 1 } ( Y ) $. Then
\begin{align*}
p z v_1 u_2 \chi_{ h^j ( Y ) } &= p z u \chi_{ h^j ( Y ) } &\text{by (\ref{eq:v1u2chiE})} \\
&= p z \chi_{ h^{ j + 1 } ( Y ) } u \\
&= p \chi_{ h^{ j + 1 } ( Y ) } u^{ j + 1 } w^{ N - ( j + 1 ) } u^{ - ( j + 1 ) } \chi_{ h^{ j + 1 } ( Y ) } u \\
&= 0
\end{align*}
and
\begin{align*}
p u z \chi_{ h^j ( Y ) } &= p u \chi_{ h^j ( Y ) } u^j w^{ N - j } u^{ -j } \chi_{ h^j ( Y ) } \\
&= p \chi_{ h^{ j + 1 } ( Y ) } u^{ j + 1 } w^{ N - j } u^{ -j } \chi_{ h^j ( Y ) } \\
&= 0.
\end{align*}
Thus, we have
\begin{equation}\label{eq:orthogtochij}
p ( z v_1 u_2 - u z ) \chi_{ h^j ( Y ) } = 0 
\end{equation}

Now observe
\begin{align*}
z v_1 u_2 \chi_{ X \setminus \bigsqcup_{ j = -1 }^{ N - 1 } h^j ( Y ) } &= z u \chi_{ X \setminus \bigsqcup_{ j = -1 }^{ N - 1 } h^j ( Y ) } &\text{by (\ref{eq:v1u2chiE})} \\
&= z \chi_{ X \setminus \bigsqcup_{ j = 0 }^{ N } h^j ( Y ) } u \\
&= \chi_{ X \setminus \bigsqcup_{ j = 0 }^{ N } h^j ( Y ) } u 
\end{align*}
and
\begin{align*}
u z \chi_{ X \setminus \bigsqcup_{ j = -1 }^{ N - 1 } h^j ( Y ) } &= u \chi_{ X \setminus \bigsqcup_{ j = -1 }^{ N - 1 } h^j ( Y ) } \\
&= \chi_{ X \setminus \bigsqcup_{ j = 0 }^{ N } h^j ( Y ) } u 
\end{align*}
and therefore 
\begin{equation}\label{eq:Xsetminus-1toN}
( z v_1 u_2 - u z )\chi_{ X \setminus \bigsqcup_{ j = -1 }^{ N - 1 } h^j ( Y ) } = 0 .
\end{equation}

Let $ t \in \{ 1 , \ldots , T \} $. Let $ \widehat{ Y }_t = Y_{ t , 1 } \cap \left( \bigsqcup_{ k = 2 }^{ K_t } h^{ J_{ t , k } } ( Y_{ t , k } ) \right) $ and let $ \widehat{ Y }_t' = h^{ J_{ t , 1 } } ( Y_{ t , 1 } ) \cap \left( \bigsqcup_{ k = 2 }^{ K_t } Y_{ t , k } \right) $. Notice that $ h^{ J_{ t , 1 } - 1 } ( \widehat{ Y }_t ) = \bigsqcup_{ k = 2 }^{ K_t' } h^{ J_{ t , k }' - 1 } ( Y_{ t , k }' ) $ and $ h^{ -1 } ( \widehat{ Y }_t' ) = \bigsqcup_{ k = 2 }^{ K_t' } h^{ -1 } ( Y_{ t , k }' ) $, and so $ u_2 \chi_{ h^{ -1 } ( \widehat{ Y }_t' ) } = \chi_{ h^{ J_{ t , 1 } - 1 } ( \widehat{ Y }_t ) } u_2 $. 
\begin{align*}
z v_1 u_2 \chi_{ h^{ - 1 } ( \widehat{ Y }_t' ) } &= z v_1 \chi_{ h^{ J_{ t , 1 } - 1 }  ( \widehat{ Y }_t ) } u_2  \\
&= z \chi_{ \widehat{ Y }_t } v_1 u_2  \\
&= v_2 v_1^* \chi_{ \widehat{ Y }_t } v_1 u_2  \\
&= v_2 \chi_{ h^{ J_{ t , 1 } - 1 } ( \widehat{ Y }_t ) } u_2  \\
&= v_2 u_2 \chi_{ h^{ - 1 } ( \widehat{ Y }_t' ) }  \\
&= u \chi_{ h^{ - 1 } ( \widehat{ Y }_t' ) }  \\
&= u z \chi_{ h^{ - 1 } ( \widehat{ Y }_t' ) }
\end{align*}
and so
\begin{equation}\label{eq:h-1widehatYt'}
( z v_1 u_2 - u z ) \chi_{ h^{ - 1 } ( \widehat{ Y }_t' ) } = 0
\end{equation}

Let $ t \in \{ 1 , \ldots , T \} $, let $ k \in \{ 1 , \ldots , K_t \} $, and let $ l \in \{ 2 , \ldots , K_t \} $. Define $ Y_{ t , k , l } = Y_{ t , k } \cap h^{ J_{ t , l } } ( Y_{ t , l } ) $. We have
\begin{align*}
z v_1 u_2 \chi_{ h^{ - 1 } ( Y_{ t , k , l } ) } &= z v_1 \chi_{ h^{ - 1 } ( Y_{ t , k , l } ) } u_2 \\
&= z \chi_{ h^{ - J_{ t , l } } ( Y_{ t , k , l } ) } v_1 u_2 \\
&= v_2 v_1^* \chi_{ h^{ - J_{ t , l } } ( Y_{ t , k , l } ) } v_1 u_2 \\
&= v_2 \chi_{ h^{ - 1 } ( Y_{ t , k , l } ) } u_2 \\
&= v_2 u_2 \chi_{ h^{ - 1 } ( Y_{ t , k , l } ) } \\
&= u \chi_{ h^{ - 1 } ( Y_{ t , k , l } ) } \\
&= u z \chi_{ h^{ - 1 } ( Y_{ t , k , l } ) }
\end{align*}
Combining the above with (\ref{eq:h-1widehatYt'}), we get
\begin{equation}\label{eq:h-1Y}
( z v_1 u_2 - u z ) \chi_{ h^{ - 1 } ( Y ) } = 0
\end{equation}

We now apply Lemma \ref{lemmaChrisProjectionCutdownLemma} with $ M = N + 3 $, $ a = z v_1 u_2 - u z $, $ p_n = \chi_{ h^n ( Y ) } $ for all $ n \in \{ 1 , \ldots , N \}$, $ q_n = \chi_{ h^{ n - 1 } ( Y ) } $ for all $ n \in \{ 1 , \ldots , N + 1 \} $, $ p_{ N + 1 } = \chi_Y $, $ p_{ N + 2 } = q_{ N + 2 } = \chi_{ h^{ -1 } ( Y ) } $, and $ p_{ N + 3 } = q_{ N + 3 } = \chi_{ X \setminus \bigsqcup_{ j = -1 }^{ N - 1 } h^j ( Y ) }$. By (\ref{eq:hjlessthaneps}), we have $ \| p_n a q_n \| < \eps $ for all $ n \in \{ 1 , \ldots , N - 1 \} $. By (\ref{eq:hNlessthaneps}), we have $ \| p_N a q_N \| < \eps $. By (\ref{eq:orthogtochij}), for $ n \in \{ 1 , \ldots , N + 1 \} $, we have $ p_m a q_n = 0 $ for all $ m \in \{ 1 , \ldots , M \}$ such that $ m \neq n $. By (\ref{eq:h-1Y}), we have $ p_m a q_{ N + 2 } = 0 $ for all $ m \in \{ 1 , \ldots , M \} $.  By (\ref{eq:Xsetminus-1toN}), we have $ p_m a q_{ N + 3 } = 0 $ for all $ m \in \{ 1 , \ldots , M \} $. Thus, by Lemma \ref{lemmaChrisProjectionCutdownLemma}, we have $ \| z v_1 u_2 - u z \| < \eps $. By (\ref{u'-u}), we have $ \| u' - u \| < \eps $ as desired.

We will now show that 
\[
A  \cong \bigoplus_{ t = 1 }^{ T } \left(  \left( C ( S^1 ) \otimes M_{ J_{ t , 1 } } \right)\oplus \left( \bigoplus_{ k = 2 }^{ K_t } M_{ J_{ t , k } } \right) \right) , 
\]
The $ C^* $-algebra $ \widehat{ A } $ generated by $ A_1 $ and $ u_2 $ is unitarily equivalent to $ A $ (via $ z $). Thus, $ A \cong \widehat{ A } $, and we will therefore work with $ \widehat{ A } $ for the remainder of the proof. For convenience of notation during the rest of the proof, set 
\[
\widehat { p } = \chi_{ X \setminus \bigsqcup_{ t = 1 }^T \bigsqcup_{ j = 0 }^{ J_{ t , 1 } - 1 } h^j ( Y_{ t , 1 } ) } .
\]
Let $$ \widehat{u} = \sum_{ t = 1 }^{ T' } \sum_{ j = 0 }^{ J_{ t , 1 } - 1 } e_{ j , J_{ t , 1 } - 1 }^{ ( t , 1 )} u_2 e_{ J_{ t , 1 } - 1 , j }^{ ( t , 1 ) } + \widehat { p } . $$ We claim $ \widehat{ u } $ is a unitary in $ \widehat{ A } $. First, for $ t \in \{ 1 , \ldots , T \} $, and $ j \in \{ 0 , \ldots , J_{ t , 1 } - 1 \}$, observe that
\begin{align}
u_2e_{ J_{ t , 1 } - 1, j }^{ ( t , 1 )} e_{ j, J_{ t , 1 } - 1 }^{ ( t , 1 )} u_2^* &= u_2 \chi_{ h^{ -1 } ( X_t' ) } u_2^* \nonumber \\
&= v_2^* \chi_{ X_t' } v_2 \nonumber \\
&= \chi_{ h^{ -1 }( X_t' ) } \nonumber \\
&= \chi_{ h^{ J_{ t , 1 } - 1 }( Y_{ t , 1 } ) } . \label{eqWidehatUIsUnitary}
\end{align}
Now, observe that
\begin{align*}
\widehat{ u } \widehat{ u }^* &= \sum_{ t = 1 }^{ T } \sum_{ j = 0 }^{ J_{ t , 1 } - 1 } e_{ j , J_{ t , 1 } - 1 }^{ ( t , 1 ) } \left( u_2 e_{ J_{ t , 1 } - 1, j }^{ ( t , 1 ) } e_{ j , J_{ t , 1 } - 1 }^{ ( t , 1 ) } u_2^* \right) e_{ J_{ t , 1 } - 1, j }^{ ( t , 1 ) } + \widehat { p } \\
&= \sum_{ t = 1 }^{ T } \sum_{ j = 0 }^{ J_{ t , 1 } - 1 } e_{ j , J_{ t , 1 } - 1 }^{ ( t , 1 ) } \chi_{ h^{ J_{ t , 1 } - 1 }(Y_{ t , 1 } ) } e_{ J_{ t , 1 } - 1, j }^{ ( t , 1 ) } + \widehat { p } &\text{by (\ref{eqWidehatUIsUnitary})}\\
&= \sum_{ t = 1 }^{ T } \sum_{ j = 0 }^{ J_{ t , 1 } - 1 } \chi_{ h^j ( Y_{ t , 1 } ) } + \widehat { p } \\
&= 1 .
\end{align*}
A similar computation shows $ \widehat{ u }^* \widehat{ u } = 1 $. Thus, $ \widehat{ u } $ is a unitary.

We claim that $ A_1 $ and $ \widehat{ u } $ commute. To see this, it is clear that we only need to check commutativity with matrix units of the form $ e_{ i , j }^{ ( t , 1 ) } $ for $ t \in \{ 1 , \ldots , T \} $. But with this in mind, we have
\begin{align*}
\widehat{ u } e_{ i , j }^{ ( t , 1 ) } \widehat{ u }^* &= e_{ i , J_{ t , 1 } - 1 }^{ ( t , 1 ) } u_2 e_{ J_{ t , 1 } - 1 , J_{ t , 1 } - 1 }^{ ( t , 1 ) } u_2^* e_{ J_{ t , 1 } - 1 , j }^{ ( t , 1 )} \\
&= e_{ i , J_{ t , 1 } - 1 }^{ ( t , 1 ) } e_{ J_{ t , 1 } - 1 , J_{ t , 1 } - 1 }^{ ( t , 1 ) } e_{ J_{ t , 1 } - 1 , j }^{ ( t , 1 ) } &\text{by (\ref{eqWidehatUIsUnitary})} \\
&= e_{ i , j }^{ ( t , 1 ) } .
\end{align*}
Thus, $ A_1 $ and $ \widehat{ u } $ commute.

We claim that $ A_1 $ and $ \widehat{ u } $ generate $ \widehat{ A } $. To see this, notice that
\[
\left( \sum_{ t = 1 }^T  e_{ J_{ t , 1 } - 1 , 0 }^{ ( t , 1 ) } \right) \widehat{ u } \left( \sum_{ t = 1 }^T  e_{ 0 , J_{ t , 1 } - 1 }^{ ( t , 1 ) } \right) = \sum_{ t = 1 }^T e_{ J_{ t , 1 } - 1 , J_{ t , 1 } - 1 }^{ ( t , 1 ) } u_2 e_{ J_{ t , 1 } - 1 , J_{ t , 1 } - 1 }^{ ( t , 1 ) } ,
\]
which, when added to
\begin{align*}
\chi_{ X \setminus \bigsqcup_{ t = 1 }^T  h^{ J_{ t , 1 } - 1 } ( Y_{ t , k } ) } &= \chi_{ X \setminus \bigsqcup_{ t = 1 }^T h^{ -1 } ( X_t' ) } \\
&= \sum_{ t = 1 }^T \sum_{ k = 1 }^{ K_t' } \sum_{ j = 0 }^{ J_{ t , k }' - 2 } \chi_{ h^j ( Y_{ t , k }' ) }
\end{align*}
yields $ u_2 $.

Let $ t \in \{ 1 , \ldots , T \} $, let $ k \in \{ 2 , \ldots , K_t \} $, and let $ i , j \in \{ 0 , \ldots , J_{ t , k } - 1 \} $. We have
\begin{align*}
e_{ i , j }^{ ( t , k ) } \widehat{ u } &= \chi_{ h^i ( Y_{ t , k } ) } u^{ j - i } \chi_{ h^j ( Y_{ t , k } ) } \widehat{ p } \\
&= \chi_{ h^i ( Y_{ t , k } ) } u^{ j - i } \chi_{ h^j ( Y_{ t , k } ) } \\
&= e_{ i , j }^{ ( t , k ) }
\end{align*}
and similarly $ \widehat{ u } e_{ i , j }^{ ( t , k ) } = e_{ i , j }^{ ( t , k ) } $. Thus, setting $ p_{ t , k } = \sum_{ i = 0 }^{ J_{ t , k } - 1 } e_{ i , i }^{ ( t , k ) } $, we have 
\begin{equation}\label{eqFinalIsom2} 
p_{ t , k } \widehat{ A } p_{ t , k } \cong M_{ J_{ t , k } } . 
\end{equation}

Fix $ t \in \{ 1 , \ldots , T \} $ and set $ p_t = \sum_{ j = 0 }^{ J_{ t , 1 } - 1 } \chi_{ h^j ( Y_{ t , 1 } ) } = \sum_{ j = 0 }^{ J_{ t , 1 } - 1 } e_{ j , j }^{ ( t , 1 ) } $. We now claim that $ p_t u_2 p_t $ and $ p_t \widehat{ u } p_t $ are unitaries in $ p_t C^*( \bZ , X , h ) p_t $. To show this, we show that $ p_t $ commutes with $ u_2 $ and $ \widehat{ u } $. 

It is obvious that $ p_t $ commutes with $ \widehat{ u } $, since $ \widehat{ u } $ commutes with $ A_1 $. So to show that $ p_t $ commutes with $ u_2 $, we first claim that, for each $ j \in \{ -J_{ t , 1 } , \ldots , -2 \} $, we have
\begin{equation} \label{eq: h^j ( X_s' ) Is A Subset Of Something}
h^j ( X_t' ) \subset \bigsqcup_{ k = 1 }^{ K_t' } \bigsqcup_{ j' = 0 }^{ J_{ t , k }' - 2 } h^{ j' }( Y_{ t , k }' ) .
\end{equation}
To see this, note that 
\[
\bigsqcup_{ k = 1 }^{ K_s' } \bigsqcup_{ j' = 0 }^{ J_{ t , k }' - 2 } h^{ j' } ( Y_{ t , k }' ) = \bigsqcup_{ k = 1 }^{ K_s' } \bigsqcup_{ j' = 0 }^{ J_{ t , k }' - 1 } h^{ j' } ( Y_{ t , k }' ) \setminus h^{ -1 } ( X_t' ) = \bigcup_{ j' \in \bZ } h^j( X_t' ) \setminus h^{ -1 }( X_t' ) ,
\]
and then note that $ Y_{ t , 1 } = h^{ -J_{ t , 1 } } ( X_t' ) $, and so since $ Y_{ t , 1 } , h ( Y_{ t , 1 } ) , \ldots , h^{ J_{ t , 1 } - 1 } ( Y_{ t , 1 } ) $ are pairwise disjoint, it follows that $ h^j ( X_t' ) \cap h^{ -1 } ( X_t' ) = \varnothing $ for all $ j \in  \{ -J_{ t , 1 } , \ldots , -2 \} $. Thus, the claim follows. Now, 
\begin{align*}
u_2 p_t &= \left( \sum_{ s = 1 }^{ T' } \sum_{ k = 1 }^{ K_t' } \left( \chi_{ h^{ J_{ s , k }' - 1 }( Y_{ s , k }' ) } u^{ J_{ s , k }' } \chi_{ h^{ -1 } ( Y_{ s , k }' ) } + \sum_{ j = 0 }^{ J_{ s , k }' - 2 } \chi_{ h^j ( Y_{ s , k }' ) } \right) \right)  \left( \sum_{ j = 0 }^{ J_{ t , 1 } - 1 } \chi_{ h^j ( Y_{ t , 1 } ) } \right) \\
&= \left( \sum_{ s = 1 }^{ T' } \sum_{ k = 1 }^{ K_s' } \left( \chi_{ h^{ J_{ s , k }' - 1 } ( Y_{ s , k }' ) } u^{ J_{ s , k }' } \chi_{ h^{ -1 } ( Y_{ s , k }' ) } + \sum_{ j = 0 }^{ J_{ s , k }' - 2 } \chi_{ h^j ( Y_{ s , k }' ) } \right) \right)  \left( \sum_{ j = -J_{ t , 1 } }^{ -1 } \chi_{ h^j ( X_t' ) } \right) \\
&= \left( \sum_{ k = 1 }^{ K_t' } \chi_{ h^{ J_{ t , k }' - 1 } ( Y_{ t , k }' ) } u^{ J_{ t , k }' } \chi_{ h^{ -1 } ( Y_{ t , k }' ) } \right) \chi_{ h^{ -1 } ( X_s' ) } + \sum_{ j = -J_{ t , 1 } }^{ -2 } \chi_{ h^j ( X_t' ) } &\text{by (\ref{eq: h^j ( X_s' ) Is A Subset Of Something})} \\
&= \left( \sum_{ k = 1 }^{ K_t' } \chi_{ h^{ J_{ t , k }' - 1 } ( Y_{ t , k }' ) } u^{ J_{ t , k }' } \chi_{ h^{ -1 } ( Y_{ t , k }' ) } \right) \sum_{ k = 1 }^{ K_t' } \chi_{ h^{ -1 } ( Y_{ t , k }' ) } + \sum_{ j = -J_{ t , 1 } }^{ -2 } \chi_{ h^j ( X_t' ) } \\
&= \sum_{ k = 1 }^{ K_t' } \chi_{ h^{ J_{ t , k }' - 1 } ( Y_{ t , k }' ) } u^{ J_{ t , k }' } \chi_{ h^{ -1 } ( Y_{ t , k }' ) } + \sum_{ j = -J_{ t , 1 } }^{ -2 } \chi_{ h^j ( X_t' ) } .
\end{align*}
Similarly, 
\begin{align*}
p_t u_2 &= \sum_{ k = 1 }^{ K_s' } \chi_{ h^{ J_{ s , k }' - 1 } ( Y_{ s , k }' ) } u^{ J_{ s , k }' } \chi_{ h^{ -1 } ( Y_{ s , k }' ) } + \sum_{ j = -J_{ t , 1 } }^{ -2 } \chi_{ h^j ( X_s' ) } .
\end{align*}
Thus, $ p_t $ commutes with $ u_t $, and so $ p_t u_2 p_t $ is a unitary in $ p_t C^*( \bZ , X , h ) p_t $. 

Now notice that $ \bigcup_{ j \in \bZ } h^j ( X_t' ) = \bigsqcup_{ k = 1 }^{ K_t' } \bigsqcup_{ j = 0 }^{ J_{ t , k }' - 1 } h^j ( Y_{ t , k }' ) $ is an $ h $-invariant compact open subset of $ X $. Set $ r_t = \chi_{ \bigcup_{ j \in \bZ } h^j ( X_t' ) } $, a projection that therefore commutes with $ u $, which means that it is central in $ C^* ( \bZ , X , h ) $. We claim that $ [ r_t \widehat{ u } r_t ] = J_{ t , 1 } [ r_t u_2 r_t ]$ in $ K_1 ( r_t C^* ( \bZ , X , h ) r_t ) $. For each $ j \in \{ 0, \ldots,  J_{ t , 1 } - 1  \} $, set $ D_j = \{ 0, \ldots,  J_{ t , 1 } - 1  \} \setminus \{ j \}$ and set 
\[
w_j = e_{ j,  J_{ t , 1 } - 1 }^{ ( t , 1 ) }  u_2 e_{ J_{ t , 1 } - 1 , j }^{ ( t , 1 ) } + \sum_{ i \in D_j } e_{ i , i } + ( r_t - p_t ) .
\]
Note that $ w_{ J_{ t , 1 } - 1 } = r_t u_2 r_t $ . We have
\begin{align} 
\prod_{ j = 0 }^{ J_{ t , 1 } - 1 } w_j &=  \prod_{ j = 0 }^{ J_{ t , 1 } - 1 } \left( e_{ j,  J_{ t , 1 } - 1 }^{ ( t , 1 )} u_2 e_{  J_{ t , 1 } - 1 , j }^{ ( t , 1 )} + \sum_{ i \in D_j } e_{ i, i }^{ ( t , 1 )} + ( r_s - p_s ) \right) \nonumber \\
&=  \sum_{ j = 0 }^{ J_{ t , 1 } - 1 }  e_{ j,  J_{ t , 1 } - 1  }^{ ( t , 1 )} u_2 e_{  J_{ t , 1 } - 1 , j }^{ ( t , 1 )} + ( r_t - p_t ) \nonumber \\
&= r_t \widehat{ u } r_t . \label{eqProductOfUnitaries}
\end{align}
Let $ j \in \{ 0 , \ldots ,  J_{ t , 1 } - 2 \}$. Define $ D_j' = \{ 0, \ldots,  J_{ t , 1 } - 2 \} \setminus \{ j \} $, 
\[
\widehat{ p }_j = \sum_{ i \in D_j' } e_{ i ,i }^{ ( t , 1 ) } + ( r_t - p_t ) ,
\] and 
\[
w_j' = e_{ j , J_{ t , 1 } - 1 }^{ ( t , 1 ) } + e_{ J_{ t , 1 } - 1 , j }^{ ( t , 1 ) } + \widehat{ p }_j .
\]
We have
\[ w_j' r_t u_2 r_t = \left( e_{ j , J_{ t , 1 } - 1 }^{ ( t , 1 )} + e_{ J_{ t , 1 } - 1 , j }^{ ( t , 1 )} + \widehat{ p }_j  \right) \left(\sum_{ k = 1 }^{ K_t' } \left( \chi_{ h^{ J_{ t , k }' - 1 } ( Y_{ t , k }' ) } u^{ J_{ t , k }' } \chi_{ h^{ -1 } ( Y_{ t , k }' ) } + \sum_{ j = 0 }^{ J_{ t , k }' - 2 } \chi_{ h^j ( Y_{ t , k }' ) } \right) \right) .
\]
We simplify the right hand side of the above equation by breaking the simplification into a few steps. First, recall that $ h^{ -1 } ( X_t' )  = h^{ J_{ t , 1 } - 1 } ( Y_{ t , 1 } ) $. With this in mind, we have
\begin{align}
e_{ j , J_{ t , 1 } - 1 }^{ ( t , 1 ) } r_t u_2 r_t &= \sum_{ k = 1 }^{ K_t' } \chi_{ h^j ( Y_{ t , 1 } ) } u^{ j - ( J_{ t , 1 } - 1 ) + J_{ t , k }' } \chi_{ h^{ -1 } ( Y_{ t , k }' ) } \nonumber \\
&= \sum_{ k = 1 }^{ K_t' }\chi_{ h^{ j - J_{ t , 1 } } ( X_t' ) } u^{ j - ( J_{ t , 1 } - 1 ) + J_{ t , k }' } \chi_{ h^{ -1 } ( Y_{ t , k }') } \nonumber \\
&= \sum_{ k = 1 }^{ K_t' } \chi_{ h^{ j - J_{ t , 1 } + J_{ t , k }' } ( Y_{ t , k }' ) }  u^{ j - ( J_{ t , 1 } - 1 ) + J_{ t , k }' } \chi_{ h^{ -1 } ( Y_{ t , k }' ) } . \label{eqU2CompLHS1}
\end{align}
Since 
\[
h^j ( Y_{ t , 1 } ) \subset \bigsqcup_{ k = 1 }^{  K_t'  } \bigsqcup_{ j' = 0 }^{ J_{ t , k }' - 2 } h^{ j' } ( Y_{ t , k }' ) ,
\]
we have 
\begin{align}
e_{ J_{ t , 1 } - 1 , j }^{ ( t , 1 ) } r_t u_2 r_t &= \chi_{ h^{ J_{ t , 1 } - 1 }( Y_{ t , 1 } ) } u^{ J_{ t , 1 } - 1 - j } \chi_{ h^j ( Y_{ t , 1 } ) } \left( \sum_{ k = 1 }^{ K_t' } \sum_{ j = 0 }^{ J_{ t , k }' - 2 } \chi_{ h^j ( Y_{ t , k }' ) } \right) \nonumber \\
&=  \chi_{h^{ J_{ t , 1 } - 1 }(Y_{ t , 1 })} u^{ J_{ t , 1 } - 1 - j } \chi_{h^j(Y_{ t , 1 })}  \nonumber \\
&= e_{ J_{ t , 1 } - 1 , j }^{ ( t , 1 )}. \label{eqU2CompLHS2}
\end{align}
Note that $\widehat{ p }_j = \chi_{ \widehat{ E }_j } $ where 
\begin{align}
\widehat{ E }_j &= \left( \bigsqcup_{i \in D_j'} h^i(Y_{ t , 1 }) \right) \sqcup \left( \left( \bigsqcup_{ k = 1 }^{ K_t' } \bigsqcup_{i=0}^{J_{ t , k }'-1} h^i(Y_{ t , k }') \right) \setminus \left( \bigsqcup_{j=0}^{ J_{ t , 1 } - 1 } h^j(Y_{ t , 1 }) \right) \right) \nonumber \\
&=  \left( \bigsqcup_{ k = 1 }^{ K_t' } \bigsqcup_{i=0}^{J_{ t , k }'-1} h^i(Y_{ t , k }') \right) \setminus \left( h^{ J_{ t , 1 } - 1 }(Y_{ t , 1 }) \sqcup h^j(Y_{ t , 1 }) \right) \nonumber \\
&=  \left( \bigsqcup_{ k = 1 }^{ K_t' } \bigsqcup_{i=0}^{J_{ t , k }'-1} h^i(Y_{ t , k }') \right) \setminus \left( h^{-1}( X_t' ) \sqcup h^j(Y_{ t , 1 }) \right) \nonumber \\
&=  \left( \bigsqcup_{ k = 1 }^{ K_t' } \bigsqcup_{i=0}^{J_{ t , k }'-2} h^i(Y_{ t , k }') \right) \setminus h^j(Y_{ t , 1 }). \label{eqEjComputation}
\end{align}
Thus, $\widehat{ E }_j \subset \bigsqcup_{ k = 1 }^{ K_t' } \bigsqcup_{ j = 0 }^{J_{ t , k }' - 2 } h^j ( Y_{ t , k }' ) $, and so we have 
\begin{align}
\widehat{ p }_j r_t u_2 r_t &= \widehat{ p }_j \left(  \sum_{ k = 1 }^{ K_t' } \sum_{j=0}^{ J_{ t , k }' - 2 } \chi_{ h^j ( Y_{ t , k }' ) } \right)  \nonumber \\
 &= \widehat{ p }_j . \label{eqU2CompLHS3}
\end{align}
From (\ref{eqU2CompLHS1}), (\ref{eqU2CompLHS2}), and (\ref{eqU2CompLHS3}), we have 
\[
w_j' r_t u_2 r_t =  \sum_{ k = 1 }^{  K_t'  }\chi_{h^{ j - J_{ t , 1 } + J_{ t , k }' } ( Y_{ t , k }' ) } u^{ j -(J_{ t , 1 } - 1) + J_{ t , k }' } \chi_{ h^{ -1 } ( Y_{ t , k }' ) } + e_{ J_{ t , 1 } - 1 , j }^{ ( t , 1 ) } + \widehat{ p }_j .
\]
Now, we compute the following: 
\[
w_j' r_t u_2 r_t w_j' = \left( \sum_{ k = 1 }^{ K_t' } u^{ j - ( J_{ t , 1 } - 1 ) +J_{ t , k }'} \chi_{h^{-1}(Y_{ t , k }')} + e_{ J_{ t , 1 } - 1 , j }^{ ( t , 1 )} + \widehat{p}_j\right) \left(e_{ j , J_{ t , 1 } - 1 }^{ ( t , 1 )} + e_{ J_{ t , 1 } - 1 , j }^{ ( t , 1 )} + \widehat{ p }_j \right). 
\]
From (\ref{eqEjComputation}), it is clear that 
\[\widehat{ p }_j e_{ j , J_{ t , 1 } - 1 }^{ ( t , 1 )} =0.
\]
Since $ j \neq  J_{ t , 1 } - 1 $, we have $ h^j ( Y_{ t , 1 } ) \cap h^{ -1 } ( X_t' ) = \varnothing $, so 
\[
\sum_{ k = 1 }^{  K_t'  } \chi_{ h^{ j - J_{ t , 1 } + J_{ t , k }' } ( Y_{ t , k }' ) } u^{ j - ( J_{ t , 1 } - 1) + J_{ t , k }' } \chi_{ h^{ -1 } ( Y_{ t , k }' ) } e_{ j , J_{ t , 1 } - 1 }^{ ( t , 1 )} = 0 .
\]
Thus, we have
\begin{align}
w_j' r_t u_2 r_t e_{ j , J_{ t , 1 } - 1 }^{ ( t , 1 )} &= e_{ J_{ t , 1 } - 1 , j }^{ ( t , 1 )} e_{ j ,  J_{ t , 1 } - 1 }^{ ( t , 1 )} \nonumber \\
&= e_{ j , j } . \label{eqU2CompRHS1}
\end{align}
Since $ J_{ t , 1 } - 1 \notin D_j $, and from (\ref{eqEjComputation}), we can easily see that 
\[
\left( e_{ J_{ t , 1 } - 1 ,j }^{ ( t , 1 )} + \widehat{ p }_j \right) e_{ J_{ t , 1 } - 1 , j }^{ ( t , 1 )} = 0 .
\]
Thus, we have
\begin{align*}
w_j' r_t u_2 r_t  e_{ J_{ t , 1 } - 1 , j }^{ ( t , 1 )} &= \sum_{ k = 1 }^{ K_s' } \chi_{ h^{ j - J_{ t , 1 } + J_{ t , k }' } ( Y_{ t , k }' ) } u^{ j - ( J_{ t , 1 } - 1 ) + J_{ t , k }' } \chi_{ h^{ -1 } ( Y_{ t , k }' ) }  u^{ J_{ t , 1 } - 1 - j } \chi_{ h^j ( Y_{ t , 1 } ) } \nonumber \\
&= \sum_{ k = 1 }^{ K_s' } \chi_{ h^{ j - J_{ t , 1 } + J_{ t , k }' } ( Y_{ t , k }' ) } u^{ j - ( J_{ t , 1 } - 1 ) + J_{ t , k }' } \chi_{ h^{ -1 } ( Y_{ t , k }' ) } u^{ J_{ t , 1 } - 1 - j } \chi_{ h^{ j - J_{ t , 1 } }( Y_{ t , k }' ) } \nonumber \\
&= \sum_{ k = 1 }^{ K_s' } \chi_{ h^{ j - J_{ t , 1 } + J_{ t , k }' } ( Y_{ t , k }' ) } u^{ J_{ t , k }' } \chi_{ h^{ j - J_{ t , 1 } }( Y_{ t , k }' ) } . 
\end{align*}
But then notice that 
\begin{align*}
e_{ j , J_{ t , 1 } - 1 }^{ ( t , 1 )} u_2 e_{ J_{ t , 1 } - 1 , j }^{ ( t , 1 )} &= \chi_{ h^j ( Y_{ t , 1 } ) } u^{ J_{ t , 1 } - 1 - j } \chi_{ h^{ J_{ t , 1 } - 1 }  ( Y_{ t , 1 } ) } u_2 \chi_{ h^{ J_{ t , 1 } - 1 } ( Y_{ t , 1 } ) } u^{ j - ( J_{ t , 1 } - 1 ) } \chi_{ h^j ( Y_{ t , 1 } ) } \\
&= \sum_{ k = 1 }^{ K_s' } \chi_{ h^{ j - J_{ t , 1 } + J_{ t , k }' } ( Y_{ t , k }' ) } u^{ J_{ t , 1 } - 1 - j } \chi_{ h^{ J_{ t , k }' - 1 } ( Y_{ t , k }' ) } u^{ J_{ t , k }' } \chi_{ h^{ -1 } ( Y_{ t , k }' ) } u^{ j - ( J_{ t , 1 } - 1 ) } \chi_{ h^{ j - J_{ t , 1 } } ( Y_{ t , k }' ) } \\
&= \sum_{ k = 1 }^{ K_s' } \chi_{ h^{ j - J_{ t , 1 } + J_{ t , k }' } ( Y_{ t , k }' ) } u^{ j - ( J_{ t , 1 } - 1 ) + J_{ t , k }' } \chi_{ h^{ j - J_{ t , 1 } } ( Y_{ t , k }' ) } .
\end{align*}
Thus, 
\begin{equation}\label{eqU2CompRHS2}
w_j' r_t u_2 r_t  e_{ J_{ t , 1 } - 1 , j }^{ ( t , 1 )} = e_{ j , J_{ t , 1 } - 1 }^{ ( t , 1 )}  u_2 e_{ J_{ t , 1 } - 1  , j }^{ ( t , 1 )} .
\end{equation}
Finally, it is immediately clear that 
\begin{align}
w_j' r_t u_2 r_t \widehat{ p }_j &= \widehat{ p }_j \widehat{ p }_j  \nonumber \\
&= \widehat{ p }_j. \label{eqU2CompRHS3}
\end{align}
So by (\ref{eqU2CompRHS1}), (\ref{eqU2CompRHS2}), and  (\ref{eqU2CompRHS3}), we see
\begin{align*}
w_j' r_t u_2 r_t w_j' &= e_{ j , j }^{ ( t , 1 )} + \sum_{ k = 1 }^{ K_s' } \chi_{ h^{ j - J_{ t , 1 } + J_{ t , k }' } ( Y_{ t , k }' ) } u^{ J_{ t , k }' } \chi_{ h^{ j - J_{ t , 1 } } ( Y_{ t , k }' ) } + \widehat{ p }_j \\
&= \sum_{ k = 1 }^{ K_s' } \chi_{ h^{ j - J_{ t , 1 } + J_{ t , k }' } ( Y_{ t , k }' ) } u^{ J_{ t , k }' } \chi_{ h^{ j - J_{ t , 1 } } ( Y_{ t , k }' ) } + \sum_{ i \in D_j } e_{ i , i }^{ ( t , 1 )} + ( r_t - p_t ) \\
&= e_{ j , J_{ t , 1 } - 1 } u_2 e_{ J_{ t , 1 } - 1 , j } + \sum_{ i \in D_j } e_{ i , i }^{ ( t , 1 )} +  ( r_t - p_t ) \\
&= w_j .
\end{align*}
Now note that $ w_j' $ is a unitary in $ r_t A_2 r_t $, and since $ r_t A_2 r_t $ is a finite-dimensional $ C^* $-subalgebra of $ r_t C^* ( \bZ , X , h ) r_t $, $ w_j' $ has trivial $ K_1 $-class. Thus, in $ K_1 ( r_t C^*( \bZ , X , h ) r_t ) $, we have $ [ w_j ] = [ r_t u_2 r_t ] $, so by (\ref{eqProductOfUnitaries}), we have 
\begin{equation}\label{eqRsK1Classes}
[ r_t \widehat{u} r_t ] = J_{ t , 1 } [ r_t u_2 r_t ].
\end{equation} 

We now show that $ [ r_t u_2 r_t ] \neq 0 $. First note that $ r_t v_2 r_t  \in r_t A_2 r_t $, and since $ r_t A_2 r_t$ is finite-dimensional, we have $ [ r_t v_2 r_2 ] = 0 $. From Lemma \ref{lemmaK1ElementsFromInvariantProjections}, we have $ [ r_t u r_t ] \neq 0 $. Thus, 
\begin{align}
[ r_t u_2 r_t ] &= [ r_t v_2^* u r_t ] \nonumber \\
&= -[ r_t v_2 r_t ] + [ r_t u r_t ] \nonumber \\
&\neq 0.\label{eqRsU2K1ClassNonzero}
\end{align}
By Lemma \ref{lemmaTorsionFreeK1}, $ r_t C^*( \bZ , X , h ) r_t $ has torsion-free $ K_1 $, so (\ref{eqRsK1Classes}) and (\ref{eqRsU2K1ClassNonzero}) give us 
\begin{equation}\label{eqRsWidehatUK1ClassNonzero}
[ r_t \widehat{u} r_t ] \neq 0 .
\end{equation}
A very straightforward computation shows 
\[
r_t \widehat{ u } r_t = p_t \widehat{ u } p_t + ( r_t - p_t ) 
\]
This fact combined with (\ref{eqRsWidehatUK1ClassNonzero}) and Lemma \ref{lemmaK1Corners} gives us $ [ p_t \widehat{u} p_t ] \neq 0 $ in $K_1( p_t C^*(\bZ,X,h) p_t )$. Thus, $\spec( p_t \widehat{u} p_t ) = S^1 $. So because of this, because $ p_t \widehat{ u } p_t $ commutes with $ e_{ i , j }^{ ( t , 1 )} $ for all $ i , j \in \{ 0 , \ldots , J_{ t , 1 } - 1 \}$, and because $ p_t \widehat{ u } p_t $ and $ ( e_{ i , j }^{ ( t , 1 )} )_{ 0 \leq i , j \leq  J_{ t , 1 } - 1 } $ generate $ p_t \widehat{ A } p_t $, by Lemma \ref{lemmaMnOfS1Isomorphism}, we have
\begin{equation}\label{eqFinalIsom1} 
p_t \widehat{ A } p_t \cong C( S^1 ) \otimes M_{ J_{ t , 1 } }. 
\end{equation}

Altogether, from (\ref{eqFinalIsom2}) and (\ref{eqFinalIsom1}), we get 
\[
\widehat{ A }  \cong \bigoplus_{ t = 1 }^{ T } \left(  \left( C ( S^1 ) \otimes M_{ J_{ t , 1 } } \right)\oplus \left( \bigoplus_{ k = 2 }^{ K_t } M_{ J_{ t , k } } \right) \right) , 
\]
finishing the proof of the theorem.
\end{proof}

We now illustrate the proof of Theorem \ref{thmMainTheorem} with various examples. We adopt the notation of the proof of Theorem \ref{thmMainTheorem} in these examples. In Example \ref{exmpCantor}, we give a example of a well-known minimal zero-dimensional system to help the reader get an idea about what's happening before moving onto something more complicated. In Example \ref{exmpIntegers}, we give an example of an essentially minimal zero-dimensional system that contains a periodic point. Finally, in Example \ref{exmpFinalExample}, we give an example of a fiberwise essentially minimal zero-dimensional system.

\begin{exmp}\label{exmpCantor}
Let $ X = \{ 0 , 1 \}^\bN $ be the Cantor set and let $ h $ be the 2-odometer action, which is the homeomorphism that takes an element of $ X $ and sends it to the element of $ X $ which is cofinal after the first occurrence of 0 in the sequence, with 1 replacing this first 0, and with 0 replacing all preceding 1's; additionally, $ h $ sends the sequence of all 1's to the sequence of all 0's. For examples of how $ h $ behaves,
\begin{align*}
h ( ( 0 , 1 , 0 , 1 , 1 , \ldots ) ) &= ( 1 , 1 , 0 , 1 , 1 , \ldots ), \\
h ( ( 1 , 1 , 0 , 1 , 1 , \ldots ) ) &= ( 0 , 0 , 1 , 1 , 1 , \ldots ), \\
h ( ( 0 , 0 , 1 , 1 , 1 , \ldots ) ) &= ( 1 , 0 , 1 , 1 , 1 , \ldots ).
\end{align*}
This is a classic example of a minimal zero-dimensional system.

Let $ \sP $ be a partition of $ X $ and let $ N \in \bZ_{ > 0 } $. For each $ n \in \{ 0 , \ldots , N \} $, let $ U_n \in \sP $ be the set containing $ h^n ( x ) $ where $ x = ( 0 , 0 , 0 , \ldots ) $. 

Let $ n \in \{ 0 , \ldots , N \} $ and write $ h^n ( x ) = ( x_1 , x_2 , x_3 , \ldots ) $. There is an $ L_n \in \bZ_{ >0 } $ and a compact open set $ V_n $ such that $ V_n \subset U_n $ and
\[
V_n = \big\{ ( y_1 , y_2 , y_3 , \ldots ) \setdiv \mbox{$ x_l = y_l $ for all $ l \in \{ 1 , \ldots , L_n \} $} \big\}.
\]

Let $ V = \bigcap_{ n = 0 }^N h^{ -n } ( V_n ) $. Then $ V $ is a compact open subset of $ X $ containing $ x $. Let $ X_1 $ be a compact open subset of $ V $ such that there is an $ L \in \bZ_{ > N } $ such that
\[
X_1 = \big\{ ( y_1 , y_2 , y_3 , \ldots ) \setdiv \mbox{$ x_l = 0 $ for all $ l \in \{ 1 , \ldots , L \} $} \big\}.
\]
Set $ T = 1 $, $ K_1 = 1 $, $ Y_{ t , 1 } = X_1 $, and $ J_{ t , 1 } = 2^L $. One can check that $ \systembasic $ is a system of finite first return time maps subordinate to $ \sP $ that satisfies the conclusions of Lemma \ref{lemmaTheLastLemma}.

Since $ h^{ J_{ t , 1 } } ( Y_{ t , 1 } ) = X_1 = Y_{ t , 1 } $, we can set $ \systemarg{ \prime } $ equal to $ \sS $. Then $ A_1 = A_2 \cong M_{ 2^L } $. Since $ v_1 = v_2 $, $ v_2 v_1^* = 1 $, and so we can take $ w = 1 $ and therefore $ z = 1 $. This should make sense, as $ v_1 u_2 = v_1 v_2^* u = u $, and so the $ C^* $-algebra $ A $ generated by $ A_1 $ and $ u_2 $ actually contains $ u $, and so we certainly don't need to do any trickery with $ z $ to obtain an approximation of $ u $. Following the rest of the proof, it is easy to see the approximating circle algebra we construct is isomorphic to 
\[
C ( S^1 ) \otimes M_{ 2^L } 
\]
\end{exmp}

\begin{exmp}\label{exmpIntegers}
Let $ X = \bZ \cup \{ \infty \} $ be the one-point compactification of the integers and let $ h $ be the shift homeomorphism; specifically, $ h ( x ) = x + 1 $ for all $ x \in \bZ $ and $ h ( \infty ) = \infty $. This is a classic example of an essentially minimal zero-dimensional system whose minimal set is $ \{ \infty \} $ (see Example \ref{exmpsSystemsOfFiniteFirstReturnTimeMaps}(\ref{exmpsSystemsOfFiniteFirstReturnTimeMaps(c)})). Note that the minimal set is finite, and so $ ( X , h ) $ indeed has a periodic point.

Let $ \sP $ be a partition of $ X $ and let $ N \in \bZ_{ > 0 } $. Let $ U $ be the element of $ \sP $ that contains $ \infty $ and let $ X_1 = \{ \infty \} \cup \left( \left( ( - \infty , a ] \cup [ b , \infty ) \right) \cap \bZ \right ) $ be a compact open subset of $ U $ such that $ b - a > N $. Set $ T = 1 $, set $ K_1 = 2 $, set $ Y_{ 1 , 1 } = \{ \infty \} \cup \left( \left( ( - \infty , a - 1 ] \cup [ b , \infty ) \right) \cap \bZ \right ) $, set $ Y_{ 1 , 2 } = \{ a \} $, set $ J_{ 1 , 1 } = 1 $, and set $ J_{ 1 , 2 } =  b - a $. It can be verified that $ \systembasic $ is a system of finite first return time maps subordinate to $ \sP $ that satisfies the conclusions of Lemma \ref{lemmaTheLastLemma}.

Define $ T' = 1 $ and $ X_1' = h^{ J_{ 1 , 1 } } ( Y_{ 1 , 1 } ) =  \{ \infty \} \cup \left( \left( ( - \infty , a ] \cup [ b + 1 , \infty ) \right) \cap \bZ \right ) $. Set $ K_1' = 2 $, set $ Y_{ 1 , 1 }' =  \{ \infty \} \cup \left( \left( ( - \infty , a - 1 ] \cup [ b + 1 , \infty ) \right) \cap \bZ \right ) $, set $ Y_{ 1 , 2 }' = \{ a \} $, set $ J_{ 1 , 1 }' = 1 $, and set $ J_{ 1 , 2 }' = b - a + 1 $. Then it can be verified that $ \systemarg{ \prime } $ is a system of finite first return time maps subordinate to $ \sP $ and indeed $ \sP_1 ( \sS' ) $ is finer than both $ \sP_1 ( \sS ) $ and $ \sP_2 ( \sS ) $.

We can see that $ A_1 \cong \bC \oplus M_{ b - a } $ and $ A_2 \cong \bC \oplus M_{ b - a + 1 } $. We compute $ Y = \{ a \} \sqcup \{ b \} $ and so $ \chi_Y A_2 \chi_Y \cong M_2 $. We see that $ \chi_Y v_2v_1^* \chi_Y = \begin{psmallmatrix} 0 & 1 \\ 1 & 0 \end{psmallmatrix} $. Thus, we can take $ w $ to be $  \begin{psmallmatrix} 0 & 1 \\ 1 & 0 \end{psmallmatrix} $, then we can take $ w = \begin{psmallmatrix} \cos ( \theta ) e^{ i \theta }  & \sin ( \theta ) e^{ - i \theta }  \\ \sin ( \theta ) e^{ - i \theta }   & \cos ( \theta ) e^{ i \theta }  \end{psmallmatrix} $ where $ \theta = 2\pi/N $ (this was discussed in Example \ref{exmpBergs}).

To get an idea of what $ z A_1 z^* $ looks like, we compute the following for each $ j \in \{ 0 , \ldots , N - 1 \} $.
\begin{align*}
z \chi_{ h^j ( \{ a \} ) } z^* &= \sin^2 \left( \frac{ \pi j }{ 2 N } \right) \chi_{ \{ a \} } + \cos \left( \frac{ \pi j }{ 2 N } \right) \sin \left( \frac{ \pi j }{ 2 N } \right) \left( \chi_{ \{ b \} } u^{ J_{ 1 , 2 } } \chi_{ \{ a \} } + \chi_{ \{ a \} } u^{ - J_{ 1 , 2 } } \chi_{ \{ b \} } \right) \\
&\hspace{0.4cm}+ \cos^2 \left( \frac{ \pi j }{ 2 N } \right) \chi_{ \{ b \} } 
\end{align*}
So at $ j = 0 $, $ z \chi_{ h^j ( \{ a \} ) } z^* $ is $ \{ b \} $, and then as $ j $ increases to $ N $, $ z \chi_{ h^j ( \{ a \} ) } z ^* $ slowly becomes $ \chi_{ h^N ( \{ a \} ) } $, with an error less than $ \eps $ at each step. This computation helps us illustrate how $ z v_1 u_2 z^* $ approximates $ u $; one of the problems of $ v_1 u_2 $ by itself is that it sends $ h^{ J_{ 1 , 2 } - 1 } ( Y_{ 1 , 2 } ) $ to $ Y_{ 1 , 2 } $, which is certainly not what $ u $ does. However, after conjugating by $ z $, $ Y_{ 1 , 2 } $ becomes $ h^{ J_{ 1 , 2 } } ( Y_{ 1 , 2 } ) $. The cost of this is that we slightly mess up what $ v_1 u_2 $ does to $ h^j ( Y_{ 1 , 2 } ) $ for $ j \in \{ 0 , \ldots , N - 1 \} $; before conjugating by $ z $, $ v_1 u_2 $ actually does act as $ u $ on $ h^j ( Y_{ 1 , 2 } ) $; however, with the fix we implemented on $ h^{ J_{ 1 , 2 } } ( Y_{ 1 , 2 } ) $, we must alter this a bit in order to ensure that we are conjugating everything in $ A_1 $ by the same unitary $ z $. Luckily, with enough room (at least $ N $ spaces of movement), this error is less than $ \eps $.

By following the rest of the proof, we eventually see that our approximating circle algebra isomorphic to
\[
( C ( S^1 ) \otimes M_{ J_{ 1 , 1 } } ) \oplus M_{ J_{ 1 , 2 } } = C ( S^1 ) \oplus M_{ b - a }.
\]
\end{exmp}

\begin{exmp}\label{exmpFinalExample}
Let $ X' = \{ 0 , 1 \}^\bN $ be the cantor set, let $ h' $ be the 2-odometer action, and let $ Z = \bZ \cup \{ \infty \} $ be the one-point compactification of the integers. Let $ X = ( X' \times Z ) / ( X' \times \{ \infty \} ) $, let $ \pi : X' \times Z \to X $ be the quotient map, and let $ h = \pi ( h' \times \id ) $ under the quotient map (see Example \ref{exmpsFiberwiseEssentiallyMinimal}(\ref{exmpsFiberwiseEssentiallyMinimal(c)})). 

Let $ \sP $ be a partition of $ X $. Let $ U $ be the element of $ \sP $ that contains $ \pi ( X' \times \{ \infty \} ) $. Then there is a set $ V \subset Z $ of the form $ \left( \left( ( - \infty , a ] \cup [ b , \infty ) \right) \cap \bZ \right ) $ such that $ X_1 = \pi ( X' \times V ) $ is a subset of $ U $. 

Let $ k \in [ a + 1 , b - 1 ] \cap \bZ $, set $ x = ( 0 , 0 , 0 , \ldots ) \in X' $, let $ n \in \{ 0 , \ldots , N \} $, and write $ h^n ( x ) = ( x_1 , x_2 , x_3 , \ldots ) $. There is an $ L_{ k , n } \in \bZ_{ > N } $ and a compact open set $ V_{ k , n } $ such that $ \pi ( V_{ k , n } \times \{ k \} ) $ is contained an in element of $ \sP $ and 
\[
V_{ k , n } = \big\{ ( y_1 , y_2 , y_3 , \ldots ) \setdiv \mbox{$ x_l = y_l $ for all $ l \in \{ 1 , \ldots , L_{ k , n } \} $} \big\}.
\]
Let $ V = \bigcap_{ k = a + 1 }^{ b - 1 } \bigcap_{ n = 0 }^N h^{ -n } ( V_{ k , n } ) $. For each $ k \in [ a + 1 , b - 1 ] \cap \bZ $, set $ X_{ k - a + 1 } = \pi ( \{ k \} \times V ) $. Note that this defines $ X_2 , X_3 , \ldots , X_{ b - a + 2 } $.

Set $ L = \max_{ k , n } L_{ k , n } $ and set $ T = b - a + 2 $. Set $ K_1 = 1 $, set $ Y_{ 1 , 1 } = X_1 $, and set $ J_{ 1 , 1 } = 1 $. For each $ k \in \{ 2 , \ldots , T \} $, set $ K_t = 1 $, set $ Y_{ t , 1 } = X_t $, and set $ J_{ t , 1 } = 2^L $. Then $ \systembasic $ is a system of finite first return time maps subordinate to $ \sP $ that satisfies the conclusions of Lemma \ref{lemmaTheLastLemma}.

The rest of the computations are similar to Example \ref{exmpCantor}. We compute $ A_1 \cong A_2 \cong \bC \oplus \left( \bigoplus_{ t = 2 }^T M_{ 2^L } \right) $, $ z = 1 $, and eventually we see that the approximating circle algebra we construct is isomorphic to 
\[
C( S^1 ) \oplus \left( \bigoplus_{ t = 2 }^T ( C( S^1 ) \otimes M_{ 2^L } ) \right) .
\]
\end{exmp}

\bibliographystyle{plain}
\bibliography{researchbib}

\end{document}